\documentclass[a4paper,11pt,fleqn, pdflatex]{article}
\usepackage{authblk}

\usepackage{amsmath, amsthm, amssymb}
\usepackage{csquotes} %for nice-looking quotations marks/environments.
\usepackage{caption} %for captions
\usepackage{txfonts,pxfonts} %for counterfactual arrows etc. 
\usepackage{color} %for colored texts
\usepackage[hidelinks]{hyperref}

\usepackage{tikz,pgf} 
\usepackage{tikz-cd}

\newtheorem*{theorem*}{Theorem}
\newtheorem{theorem}{Theorem}

\newtheorem{corollary}{Corollary}

\newtheorem{lemma}{Lemma}

\newtheorem*{lemma*}{Lemma}
\newtheorem*{remark*}{Remark}

\theoremstyle{definition}
\newtheorem{definition}{Definition}

\DeclareMathOperator{\Ext}{\mathsf{Ext}}
\DeclareMathOperator{\NExt}{\mathsf{NExt}}
\DeclareMathOperator{\RExt}{\mathsf{RExt}}
\DeclareMathOperator{\nExt}{\mathsf{nExt}}
\DeclareMathOperator{\rExt}{\mathsf{rExt}}

\usepackage{setspace}
\usepackage[left=3cm,right=3cm,top=3cm,bottom=3cm]{geometry}
\allowdisplaybreaks[1]
\usepackage[backend=biber, sorting=nyt]{biblatex}
\title{Post completeness in conditional logic} 

\author{Giuliano Rosella$^1$ and Yale Weiss$^2$}

\date{$^1$Institute of Computer Science \\ 
Czech Academy of Sciences \\ 
rosella@cs.cas.cz\\
\smallskip
$^2$The Saul Kripke Center and Philosophy Program \\
    The Graduate Center, CUNY \\ yweiss@gradcenter.cuny.edu
}

\begin{document}

\maketitle

\begin{abstract}
  A logic is Post complete if it is consistent but has no consistent proper extensions. In this article, we systematically investigate the Post complete extensions of certain basic conditional logics. We identify all of the finitely many regular and normal Post complete conditional logics, and prove analogues of Makinson's embedding theorems. We also show that certain basic conditional logics have uncountably many Post complete extensions for which closure under some, but not necessarily all, rules peculiar to the conditional are relaxed. We reflect on what our results tell us about the structure of certain lattices of conditional logics and also draw some morals for multimodal logic.
\end{abstract}

%\begin{keyword}[class=AMS]
%  \kwd{03B45} \kwd{03G99} \kwd{03B60}
%\end{keyword}

%\begin{keyword}
 % \ \kwd{Conditional logic} \kwd{Counterfactual logic} \kwd{Modal logic} \kwd{Post completeness} \kwd{Selection function}
%\end{keyword}

%\end{frontmatter}

    \section{Introduction}
    \label{Section:Introduction}

    Conditional logic studies and formalizes modes of reasoning involving various conditional expressions from natural language. The discipline came into its own with the important semantic work of   Lewis~\cite{Lewis1971CompletenessThree,Lewis1973Counterfactuals} and Stalnaker~\cite{Stalnaker1968}, who showed that many of these systems are amenable to a Kripke-style model theoretic analysis familiar from modal logic. 
    
    While the connections between conditional and modal logic are deep, and despite the fact that the study of structural features of large families of modal logics is now well-established, conditional logics generally have been investigated piecemeal rather than systematically. By contrast, many features of the lattice of normal modal logics are well-understood, and certain corners of the lattice have been entirely mapped out.\footnote{For a textbook presentation of many results in this area, consult Chagrov and Zakharyaschev~\cite{ChagrovZakharyaschev1997}.} For example, Scroggs~\cite{Scroggs1951Extensions} identified all of the (quasi-)normal extensions of \textsf{S5}, Makinson~\cite{Makinson1971} demonstrated (inter alia) that the lattice of normal modal logics has exactly two coatoms, and Segerberg~\cite{Segerberg1971,Segerberg1972PostCompleteness} extended the results of both in important ways. 

    In this article, we contribute to the systematic study of conditional logic through an examination of Post completeness. A logic is \textit{Post complete} if it is consistent but has no consistent proper extensions; relative to a given lattice of extensions of a system, the Post complete systems are the \textit{coatoms} of the lattice (for further discussion, see Section~\ref{Subsection:Lattices of Conditional Logics} below). Our investigation will exploit connections with modal logic to extend some techniques and results of Makinson~\cite{Makinson1971} and Segerberg~\cite{Segerberg1972PostCompleteness} to conditional logic, but we will also prove certain results which serve to distinguish conditional logic from modal logic.

    The plan of the article is as follows. We present the basic conditional logics and families of conditional logics that we will examine, both axiomatically and semantically, in Section~\ref{Section:Logics}. In Section~\ref{Section:Post completeness}, we present the central results of the paper. We identify all of the finitely many Post complete regular and normal conditional logics, prove analogues of Makinson's embedding theorems, and show that there are uncountably many (non-normal) Post complete extensions of certain conditional logics. We also highlight some structural parallels and divergences between conditional and multimodal logic (a more detailed treatment is deferred to Appendix~\ref{Section:Connections with multimodal logic}). Finally, we conclude with some directions for future research in Section~\ref{Section:Conclusions}. 

    \section{Logics}
    \label{Section:Logics}

    In this section, we present the basic logics and families of logics we will be studying in this article. These are developed axiomatically in Section~\ref{Subsection:Axiom Systems} and then semantically (i.e., model theoretically) in Section~\ref{Subsection:Semantics}. First, however, we pin down some details concerning the language(s) we will be working with.

    Consider a propositional language $\mathcal{L}^\bot$ ($\mathcal{L}$) containing the propositional constant $\bot$ and the primitive connectives $\neg$, $\vee$, and $\boxright$ (and a denumerable set of variables $Var=\{p, q,\dots\}$). $\top, \wedge, \to$, and $\leftrightarrow$ are defined as usual (in particular, $\top$ is taken to be $\neg\bot$). We sometimes use $\square_{\varphi}\psi$ and $\lozenge_{\varphi}\psi$ as abbreviations for $\varphi\boxright\psi$ and $\neg\square_{\varphi}\neg\psi$, respectively (particularly, where the antecedent is $\top$; cf.~the ``inner modalities'' of Lewis~\cite[p.~30]{Lewis1973Counterfactuals}). Let $Fm^\bot$ ($Fm$) denote the set of formulas of $\mathcal{L}^\bot$ ($\mathcal{L}$). All of the logics we are interested in will be defined as subsets of $Fm$, but we will have occasion to make special technical use of $Fm^\bot$.

    A \textit{substitution} is a function $^\sigma:Var\to{Fm}$. Every substitution $^\sigma$ induces a function (which we write the same way) $^\sigma:Fm\to{Fm}$ which is a homomorphism. $\varphi^\sigma$ is the result of \textit{uniformly substituting} $p^\sigma$ for every occurrence of every variable $p$ in $\varphi$.

    \subsection{Axiom systems}
    \label{Subsection:Axiom Systems}

    In this section, we present a few important systems of conditional logic and families of conditional logics axiomatically. The conventions employed largely follow Chellas~\cite[\S4]{Chellas1975Basic} and Nute~\cite[pp.~128--130]{Nute1980Topics}, with some minor deviations.
    
    \begin{definition}
    \label{Definition:ConditionalLogic}
        A \textit{conditional logic} \textsf{L} is a subset of $Fm$ containing all tautologies and closed under modus ponens and uniform substitution. 
    \end{definition}

    A formula $\varphi$ is a \textit{theorem} of the conditional logic \textsf{L} (in symbols, $\vdash_{\mathsf{L}}\varphi$) iff $\varphi\in\mathsf{L}$. $\varphi$ is \textsf{L}-\textit{derivable} from a set of formulas $\Gamma$ (in symbols, $\Gamma\vdash_{\mathsf{L}}\varphi$) iff for some $\gamma_1,\ldots,\gamma_n \in \Gamma$ ($n\geq0$), $\vdash_{\mathsf{L}}(\gamma_1\wedge\ldots\wedge\gamma_n)\to\varphi$.\footnote{Where $n=0$, $(\gamma_1\wedge\ldots\wedge\gamma_n)\to\varphi$ is taken to be $\varphi$.} A conditional logic \textsf{L} is \textit{consistent} iff $\not\vdash_{\mathsf{L}}\bot$. A set of formulas $\Gamma$ is \textsf{L}-\textit{consistent} iff $\Gamma\not\vdash_{\mathsf{L}}\bot$. Where \textsf{K} and \textsf{L} are conditional logics, we say that \textsf{L} is an \textit{extension} of \textsf{K} if $\textsf{K}\subseteq\textsf{L}$.

    From Definition~\ref{Definition:ConditionalLogic}, it is clear that the smallest conditional logic is simply classical propositional logic (\textsf{PL}) as formulated in $Fm$. The greatest conditional logic is the trivial logic, that is, $Fm$ itself. More interesting systems are obtained by augmenting \textsf{PL} by axioms or rules such as:
    
    \begin{equation*}
        \label{Axiom:CM}
        \tag{CM}
(\varphi\boxright(\psi\wedge\theta))\to((\varphi\boxright\psi)\wedge(\varphi\boxright\theta))
    \end{equation*}
    \begin{equation*}
        \label{Axiom:CC}
        \tag{CC}
((\varphi\boxright\psi)\wedge(\varphi\boxright\theta))\to(\varphi\boxright(\psi\wedge\theta))
    \end{equation*}
    \begin{equation*}
        \label{Axiom:CN}
        \tag{CN}
        \varphi\boxright\top
    \end{equation*}
    \begin{equation*}
        \label{Rule:RCEA}
        \tag{RCEA}
        \frac{\vdash\varphi\leftrightarrow\psi}{\vdash(\varphi\boxright\theta)\leftrightarrow(\psi\boxright\theta)}
    \end{equation*}
    \begin{equation*}
        \label{Rule:RCEC}
        \tag{RCEC}
        \frac{\vdash\varphi\leftrightarrow\psi}{\vdash(\theta\boxright\varphi)\leftrightarrow(\theta\boxright\psi)}
    \end{equation*}

    A conditional logic is \textit{semiregular} (\textit{regular}) if it contains \ref{Axiom:CM} and \ref{Axiom:CC} and is closed under \ref{Rule:RCEC} (and \ref{Rule:RCEA}). A conditional logic is \textit{seminormal} (\textit{normal}) if it is semiregular (regular) and it contains \ref{Axiom:CN}. The smallest semiregular, seminormal, regular, and normal conditional logics are called \textsf{Cr}, \textsf{Ck}, \textsf{CR}, and \textsf{CK}, respectively. A conditional logic is \textit{quasi-semiregular} (\textit{quasi-seminormal}, \textit{quasi-regular}, \textit{quasi-normal}) if it contains \textsf{Cr} (\textsf{Ck}, \textsf{CR}, \textsf{CK}).

    Although less central to the project of this paper, we will also make a few comments about a stronger family of conditional logics with axioms drawn from the following list:
    \begin{equation*}
        \label{Axiom:ID}
        \tag{ID}
        \varphi\boxright\varphi
    \end{equation*}
    \begin{equation*}
        \label{Axiom:MOD}
        \tag{MOD}
        (\neg\psi\boxright\bot)\to(\varphi\boxright\psi)
    \end{equation*}
    \begin{equation}
        \label{Axiom:SegerbergExportation}
        \tag{SE}
        ((\varphi\wedge\psi)\boxright\theta)\to(\varphi\boxright(\psi\to\theta))
    \end{equation}
    \begin{equation}
        \label{Axiom:SegerbergImport}
        \tag{SI}
        \neg(\varphi\boxright\neg\psi)\to((\varphi\boxright(\psi\to\theta))\to((\varphi\wedge\psi)\boxright\theta))
    \end{equation}
    \begin{equation}
        \label{Axiom:CMonotonicity}
        \tag{RM}
        ((\varphi\boxright\psi)\wedge(\varphi\boxright\theta))\to((\varphi\wedge\psi)\boxright\theta)
    \end{equation}
    A conditional logic is \textit{variably strict} if it is normal and contains \ref{Axiom:ID}, \ref{Axiom:MOD}, \ref{Axiom:SegerbergExportation}, \ref{Axiom:SegerbergImport}, and \ref{Axiom:CMonotonicity}. The smallest variably strict conditional logic is the system Lewis~\cite{Lewis1973Counterfactuals} calls \textsf{V}.\footnote{This is the same as the system Lewis calls \textsf{C0} in \cite{Lewis1971CompletenessThree}. The style of axiomatization here follows Segerberg~\cite{Segerberg1989ConditionalNotes}.}

    \subsection{Semantics}
    \label{Subsection:Semantics}

    We turn now to semantics (i.e., model theory). We consider three kinds of semantics suited to handling regular, seminormal, and normal conditional logics, respectively.\footnote{We do not develop a semantics for semiregular conditional logics here, since we will not have occasion to employ it. However, it is easy to see how the semantics for regular and seminormal conditional logics could be combined to handle such systems.}

    We begin with the neighborhood (``minimal'') semantics for \textsf{CR} and extensions thereof from Chellas~\cite[\S8]{Chellas1975Basic}.

    \begin{definition}
        \label{Definition:NeighborhoodFrame}
        A \textit{neighborhood frame} is a structure $\mathfrak{F}^{\mathcal{N}}=\langle{W,\mathcal{N}}\rangle$ where $W\neq\emptyset$ is a set of worlds and $\mathcal{N}:W\times\mathcal{P}(W)\to\mathcal{P}(\mathcal{P}(W))$ is a function indexed by worlds and propositions (i.e., sets of worlds). 
    \end{definition}    

    The class of all neighborhood frames is inadequate to characterize \textsf{CR}. We restrict our attention in what follows to neighborhood frames satisfying the following two conditions ($X, Y, Z \subseteq W$; $w\in W$):
    %\begin{equation*}
    %    \label{Condition:rcea}
    %    \tag{rcea}
    %    [\varphi]^{\mathfrak{M}^{n}}=[\psi]^{\mathfrak{M}^{n}} \Rightarrow f(w,\varphi)=f(w,\psi)
    %\end{equation*}
    \begin{equation*}
        \label{Condition:cm}
        \tag{cm}
        (X\cap Y)\in \mathcal{N}(w,Z) \Rightarrow X, Y \in \mathcal{N}(w,Z)
    \end{equation*}
    \begin{equation*}
        \label{Condition:cc}
        \tag{cc}
        X, Y \in \mathcal{N}(w,Z) \Rightarrow (X\cap Y)\in \mathcal{N}(w,Z)
    \end{equation*}
    We call such neighborhood frames \textit{regular}. Neighborhood frames adequate for the basic normal conditional logic \textsf{CK} can be obtained by imposing the further condition:
    \begin{equation*}
        \label{Condition:cn}
        \tag{cn}
        W \in \mathcal{N}(w,X)
    \end{equation*}
    However, we will examine (and later employ) an alternative semantics specific to normal conditional logics below.

    \begin{definition}
        \label{Definition:NeighborhoodModel}
        A \textit{neighborhood model} is a structure $\mathfrak{M}^{\mathcal{N}}=\langle{W,\mathcal{N},V}\rangle$, where $\mathfrak{F}^{\mathcal{N}}=\langle{W,\mathcal{N}}\rangle$ is a neighborhood frame and $V:Var\to\mathcal{P}(W)$ is a valuation.
    \end{definition}

    With respect to a neighborhood model $\mathfrak{M}^{\mathcal{N}}=\langle{W,\mathcal{N},V}\rangle$ and a given world $w\in W$, we define the relation $\models^{\mathfrak{M}^{\mathcal{N}}}_{w}$ as follows ($[\varphi]^{\mathfrak{M}^{\mathcal{N}}}:=\{w\in W:\models^{\mathfrak{M}^{\mathcal{N}}}_{w}\varphi\}$):
    \begin{itemize}
        \item[$p$.] $\models^{\mathfrak{M}^{\mathcal{N}}}_{w}p$ iff $w\in V(p)$;
        \item[$\bot$.] $\models^{\mathfrak{M}^{\mathcal{N}}}_{w}\bot$ never;
        \item[$\neg$.] $\models^{\mathfrak{M}^{\mathcal{N}}}_{w}\neg\varphi$ iff $\not\models^{\mathfrak{M}^{\mathcal{N}}}_{w}\varphi$;
        \item[$\vee$.] $\models^{\mathfrak{M}^{\mathcal{N}}}_{w}\varphi\vee\psi$ iff $\models^{\mathfrak{M}^{\mathcal{N}}}_{w}\varphi$ or $\models^{\mathfrak{M}^{\mathcal{N}}}_{w}\psi$;
        \item[$\boxright$.] $\models^{\mathfrak{M}^{\mathcal{N}}}_{w}\varphi\boxright\psi$ iff $[\psi]^{\mathfrak{M}^{\mathcal{N}}}\in \mathcal{N}(w,[\varphi]^{\mathfrak{M}^{\mathcal{N}}})$.
    \end{itemize}
    We define that $\models^{\mathfrak{M}^{\mathcal{N}}}\varphi$ iff $\models^{\mathfrak{M}^{\mathcal{N}}}_{w}\varphi$ for all $w\in W$; and $\models^{\mathfrak{F}^{\mathcal{N}}}\varphi$ iff $\models^{\mathfrak{M}^{\mathcal{N}}}\varphi$ for every model $\mathfrak{M}^{\mathcal{N}}$ over $\mathfrak{F}^{\mathcal{N}}$. For a frame $\mathfrak{F}^{\mathcal{N}}$, by $\mathsf{L}(\mathfrak{F}^{\mathcal{N}})$ is intended $\{\varphi\in{Fm}:\models^{\mathfrak{F}^{\mathcal{N}}}\varphi\}$; and for a class of frames $\mathcal{C}^{\mathcal{N}}$, by $\mathsf{L}(\mathcal{C}^{\mathcal{N}})$ is intended $\bigcap_{\mathfrak{F}^{\mathcal{N}}\in\mathcal{C}^{\mathcal{N}}}\mathsf{L}(\mathfrak{F}^{\mathcal{N}})$.

    \begin{definition}
        \label{Definition:Validity}
        A formula $\varphi$ is \textit{valid} in a neighborhood frame $\mathfrak{F}^{\mathcal{N}}$ (class of neighborhood frames $\mathcal{C}^{\mathcal{N}}$) iff $\varphi\in\mathsf{L}(\mathfrak{F}^{\mathcal{N}})$ ($\varphi\in\mathsf{L}(\mathcal{C}^{\mathcal{N}})$).
    \end{definition}
    
    Let $\mathcal{NR}$ be the class of all regular neighborhood frames. Chellas~\cite[\S8]{Chellas1975Basic} proved that \textsf{CR} is sound and complete with respect to its neighborhood semantics; that is,  $\vdash_{\mathsf{CR}}\varphi$ iff $\varphi\in\mathsf{L}(\mathcal{NR})$.

    We turn now to semantics for seminormal and normal conditional logics, which we develop concurrently. Both model theories employ ``selection functions'', and so belong to a semantic tradition in conditional logic going back to Stalnaker~\cite{Stalnaker1968}.\footnote{In the selection function semantics of Stalnaker~\cite{Stalnaker1968}, functions select \textit{worlds}; our selection functions are \textit{set} selection functions in the sense of Lewis~\cite[\S2.7]{Lewis1973Counterfactuals}.} They are distinguished by whether the selection functions are indexed by propositions or by formulas.\footnote{Chellas discusses both forms of model theory, but he only discusses the formula version in a note \cite[pp.~149--150, n.~14]{Chellas1975Basic}.}

    \begin{definition}
        \label{Definition:FormulaSelectionFrame}
        A \textit{formula selection frame} is a structure $\mathfrak{F}^{f}=\langle{W,f}\rangle$ where $W\neq\emptyset$ is a set of worlds and $f:W\times{Fm}\to\mathcal{P}(W)$ is a selection function indexed by worlds and formulas. 
    \end{definition}

    \begin{definition}
        \label{Definition:PropositionSelectionFrame}
        A \textit{proposition selection frame} is a structure $\mathfrak{F}^{\pi}=\langle{W,\pi}\rangle$ where $W\neq\emptyset$ is a set of worlds and $\pi:W\times\mathcal{P}(W)\to\mathcal{P}(W)$ is a selection function indexed by worlds and propositions. 
    \end{definition}

    Formula (proposition) \textit{selection models} are defined, mutatis mutandis, as in Definition~\ref{Definition:NeighborhoodModel}. With respect to a formula (proposition) selection model $\mathfrak{M}^f = \langle W, f, V \rangle$ ($\mathfrak{M}^\pi = \langle W, \pi, V \rangle$) and world $w\in W$, the relation $\models^{\mathfrak{M}^f}_{w}$ ($\models^{\mathfrak{M}^\pi}_{w}$) is defined as above except:
    \begin{itemize}
    \item[$\boxright'$.] $\models^{\mathfrak{M}^{f}}_{w}\varphi\boxright\psi$ iff $f(w, \varphi)\subseteq[\psi]^{\mathfrak{M}^{f}}$;
    
    \item[$\boxright''$.] $\models^{\mathfrak{M}^{\pi}}_{w}\varphi\boxright\psi$ iff $\pi(w,[\varphi]^{\mathfrak{M}^{\pi}})\subseteq[\psi]^{\mathfrak{M}^{\pi}}$.
    \end{itemize} 
    The notation from above (e.g., $\mathsf{L}(\cdot)$) is extended in the obvious way, as is the definition of validity (Definition~\ref{Definition:Validity}).\footnote{Caution: for formula selection frames $\mathfrak{F}^f$, $\mathsf{L}(\mathfrak{F}^f)$ is not, in general, a logic. In particular, $\mathsf{L}(\mathfrak{F}^f)$ may fail to be closed under uniform substitution.} Hereafter we sometimes omit superscripts (e.g., writing $\mathfrak{M}$ for $\mathfrak{M}^\pi$) where context disambiguates. Chellas~\cite[\S6]{Chellas1975Basic} proved that \textsf{CK} is sound and complete with respect to the proposition selection function semantics (i.e., $\vdash_{\mathsf{CK}}\varphi$ iff $\varphi\in\mathsf{L}(\mathcal{PS})$, where $\mathcal{PS}$ is the class of all proposition selection frames). For a proof that \textsf{Ck} is sound and complete with respect to the formula selection function semantics, consult Weiss~\cite[\S2.3.1]{Weiss2019Dissertation}.

    \section{Post completeness}
    \label{Section:Post completeness}

    In this section, we present some results on the number of Post complete extensions of various conditional logics. First, some definitions:

    \begin{definition}
        \label{Definition:PostComplete}
        A conditional logic \textsf{L} is \textit{Post complete} if it is consistent but has no consistent proper extensions (equivalently, whose only proper extension is $Fm$).\footnote{Post~\cite[p.~177]{Post1921ElementaryPropositions} calls such systems ``closed'' (cf.~Tarski~\cite[p.~93]{Tarski1930Methodology}).}
    \end{definition}

    \begin{definition}
        \label{Definition:PostNumber}
        The \textit{Post number} of a conditional logic \textsf{L}, denoted $\rho(\mathsf{L})$, is the number of Post complete extensions of \textsf{L}.
    \end{definition}

    A couple of abstract and general results allow us to put weak bounds on $\rho$ (we omit the proof of the second, but see Tarski~\cite[p.~98]{Tarski1930Methodology}; cf.~Segerberg~\cite[p.~711]{Segerberg1972PostCompleteness}):

    \begin{lemma}
      \label{Lemma:MaxPostCompleteExtensions}
      For any conditional logic $\mathsf{L}$, $\rho(\mathsf{L})\leq2^{\aleph_0}$.
    \end{lemma}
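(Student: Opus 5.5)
The plan is a cardinality count. Every Post complete extension of $\mathsf{L}$ is a subset of $Fm$, and distinct extensions are distinct subsets, so $\rho(\mathsf{L})$ is at most the number of subsets of $Fm$. I will therefore bound $|Fm|$ and then use $|\mathcal{P}(Fm)|=2^{|Fm|}$.

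First, $Fm$ is countably infinite. The language $\mathcal{L}$ has a denumerable set of variables $Var$ and finitely many primitive symbols: $\bot$, $\neg$, $\vee$, $\boxright$, and parentheses. Every formula is a finite string over this countable alphabet. The set of finite strings over a countable alphabet is a countable union $\bigcup_{n}A^n$ of countable sets, hence countable. Since $Var\subseteq Fm$, $Fm$ is infinite, so $|Fm|=\aleph_0$.

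Second, the collection of all conditional logics extending $\mathsf{L}$ is a subcollection of $\mathcal{P}(Fm)$. In particular, the Post complete extensions of $\mathsf{L}$ form a subset of $\mathcal{P}(Fm)$. Hence $\rho(\mathsf{L})\leq|\mathcal{P}(Fm)|=2^{\aleph_0}$.

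No step is a real obstacle. The only point needing care is that $Fm$ is countable, which is the standard encoding of finite strings over a countable alphabet. Nothing about closure under modus ponens, uniform substitution, or the conditional-specific rules is needed, since the bound holds for arbitrary families of formula sets.
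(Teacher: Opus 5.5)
Your proposal is correct and follows the paper's own proof: each Post complete extension of $\mathsf{L}$ is a subset of $Fm$, and $Fm$ has only $2^{\aleph_0}$ subsets. You also spell out why $Fm$ is countable, a step the paper simply takes as obvious.
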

    \begin{proof}
      A conditional logic is a subset of $Fm$ (Definition~\ref{Definition:ConditionalLogic}), and there are clearly $2^{\aleph_0}$ subsets of $Fm$.
    \end{proof}

    \begin{lemma}[Lindenbaum]
        \label{Lemma:Lindenbaum}
        For any consistent conditional logic \emph{\textsf{L}}, $\rho(\mathsf{L})\geq1$.
    \end{lemma}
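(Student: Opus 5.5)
The plan is a standard Zorn's lemma (Lindenbaum) argument on the poset of consistent extensions of \textsf{L}. Let $\mathcal{E}$ be the set of all conditional logics $\mathsf{K}$ with $\mathsf{L}\subseteq\mathsf{K}$ and $\not\vdash_{\mathsf{K}}\bot$, partially ordered by inclusion. Since \textsf{L} is consistent, $\mathsf{L}\in\mathcal{E}$, so $\mathcal{E}\neq\emptyset$. A maximal element of $\mathcal{E}$ is exactly a Post complete extension of \textsf{L}. Indeed, if $\mathsf{K}$ is maximal and $\mathsf{K}\subsetneq\mathsf{K}'$ for some conditional logic $\mathsf{K}'$, then $\mathsf{K}'\notin\mathcal{E}$, so $\mathsf{K}'$ is inconsistent. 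Once $\bot\in\mathsf{K}'$, closure under modus ponens and the tautology $\bot\to\varphi$ give $\mathsf{K}'=Fm$. So it suffices to show that $\mathcal{E}$ has a maximal element, which gives $\rho(\mathsf{L})\geq1$.

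To apply Zorn's lemma, take a nonempty chain $\mathcal{C}\subseteq\mathcal{E}$ and let $\mathsf{U}=\bigcup\mathcal{C}$. I would check three things.
\begin{enumerate}
\item $\mathsf{U}$ contains all tautologies and contains \textsf{L}, since every member of $\mathcal{C}$ does.
\item $\mathsf{U}$ is closed under uniform substitution. If $\varphi\in\mathsf{U}$, then $\varphi\in\mathsf{K}$ for some $\mathsf{K}\in\mathcal{C}$, so $\varphi^\sigma\in\mathsf{K}\subseteq\mathsf{U}$.
\item $\mathsf{U}$ is closed under modus ponens. If $\varphi\in\mathsf{K}_1$ and $\varphi\to\psi\in\mathsf{K}_2$, then because $\mathcal{C}$ is a chain both lie in the larger of $\mathsf{K}_1,\mathsf{K}_2$. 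That logic is closed under modus ponens, so $\psi\in\mathsf{U}$.
\end{enumerate}
Finally, $\bot\notin\mathsf{U}$, since otherwise $\bot$ would belong to some member of $\mathcal{C}$, contradicting its consistency. So $\mathsf{U}\in\mathcal{E}$ is an upper bound for $\mathcal{C}$, and Zorn's lemma yields a maximal element.

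The only step needing care is the closure of the union under modus ponens, which is where the chain condition is used. This works only because modus ponens is a finitary rule: it has just two premises. That point matters for later variants. If one wanted Post complete extensions within a class defined by further rules (e.g.\ regular logics closed under \ref{Rule:RCEC} and \ref{Rule:RCEA}), the same argument would go through because those rules are also single-premise. Alternatively, since $Fm$ is countable, one could avoid Zorn's lemma. Enumerate $Fm$ as $\varphi_0,\varphi_1,\ldots$ and build a chain $\mathsf{L}=\mathsf{L}_0\subseteq\mathsf{L}_1\subseteq\cdots$. At each stage, add $\varphi_n$ by taking the least conditional logic containing $\mathsf{L}_n\cup\{\varphi_n\}$, if that logic is consistent; otherwise set $\mathsf{L}_{n+1}=\mathsf{L}_n$. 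The union is then consistent by the argument above and maximal by construction.
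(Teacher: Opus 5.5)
Your proof is correct. The paper omits the proof and refers to Tarski (cf.\ Segerberg); your Zorn's-lemma argument, resting on the fact that unions of chains of consistent conditional logics remain consistent and closed under modus ponens and uniform substitution, is exactly the standard Lindenbaum argument being cited, as is your enumeration variant using the countability of $Fm$.
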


    In Section~\ref{Subsection:Lattices of Conditional Logics}, we briefly describe some lattices of conditional logics and examine Post completeness from this perspective. In Section~\ref{Subsection:Regular and Normal Post complete extensions}, we identify all of the finitely many regular and normal Post complete conditional logics. We also prove analogues of Makinson's embedding theorems from \cite{Makinson1971} and show that every normal (regular) conditional logic has a proposition selection (neighborhood) frame. Finally, in Section~\ref{Subsection:Further Post complete extensions}, we present some further results on seminormal and quasi-normal Post complete conditional logics.

    \subsection{Lattices of conditional logics}
    \label{Subsection:Lattices of Conditional Logics}

    It is clear that the extensions of any conditional logic \textsf{L}, $\Ext(\mathsf{L})$, form a complete lattice ordered by inclusion ($\subseteq$), in which intersection ($\cap$) is meet and closure of unions under modus ponens and uniform substitution ($+$) is join. By $\mathsf{L}+X$ is intended the smallest extension of $\mathsf{L}$ containing the axiom $X$ (or, more generally, axioms from the set $X$), that is, the smallest set containing $\mathsf{L}$ and $X$ which is closed under modus ponens and uniform substitution.

    We may further consider the lattice of all semiregular (seminormal) extensions of a semiregular (seminormal) conditional logic \textsf{L}, $\rExt(\mathsf{L})$ ($\nExt(\mathsf{L})$); this is again a complete lattice in which intersection is meet and closure of unions under modus ponens, uniform substitution, and \ref{Rule:RCEC} ($\boxplus$) is join. Where \textsf{L} is a semiregular (seminormal) conditional logic, by $\mathsf{L}\boxplus X$ is intended the smallest semiregular (seminormal) extension of $\mathsf{L}$ containing $X$. 
    
    Finally, we may consider the lattice of all regular (normal) extensions of a regular (normal) conditional logic \textsf{L}, $\RExt(\mathsf{L})$ ($\NExt(\mathsf{L})$); this is, once more, a complete lattice in which intersection is meet and closure of unions under modus ponens, uniform substitution, \ref{Rule:RCEC}, and \ref{Rule:RCEA} ($\oplus$) is join. Where \textsf{L} is a regular (normal) conditional logic, by $\mathsf{L}\oplus X$ is intended the smallest regular (normal) extension of $\mathsf{L}$ containing $X$.

    It is clear that the bottom element in the lattice $\Ext(\mathsf{L})$ ($\rExt(\mathsf{L})$, etc.) is \textsf{L} and the top element is $Fm$. By a \textit{coatom}, we mean an element $\mathsf{K}$ of the lattice such that $\mathsf{K}\subset Fm$ but for any $\mathsf{K}'\supseteq\mathsf{K}$, $\mathsf{K}'=\mathsf{K}$ or $\mathsf{K}'=Fm$. Then the Post complete extensions of a conditional logic \textsf{L} are the coatoms in $\Ext(\mathsf{L})$; in other words, $\rho(\textsf{L})$ is the number of coatoms in $\Ext(\mathsf{L})$. A lattice is \textit{coatomic} if for any $\mathsf{K}\subset Fm$, $\mathsf{K}$ is contained in a coatom. It follows from Lemma~\ref{Lemma:Lindenbaum} that for any consistent conditional logic \textsf{L}, the lattice $\Ext(\mathsf{L})$ is coatomic.\footnote{In fact, Lemma~\ref{Lemma:Lindenbaum} can be strengthened to read that any consistent conditional logic closed under \ref{Rule:RCEA}, \ref{Rule:RCEC}, or both, has a Post complete extension closed under the same rules. Hence, $\NExt(\mathsf{L})$ is also coatomic whenever \textsf{L} is a consistent normal conditional logic, and similarly for other families of conditional logics.} In what follows, we will establish various other facts about lattices of conditional logics and their coatoms. We show (inter alia) that there are $4$ coatoms in $\NExt(\textsf{CK})$, $9$ in $\RExt(\textsf{CR})$, $2^{\aleph_0}$ in $\nExt(\textsf{Ck})$, and $2^{\aleph_0}$ in $\Ext(\mathsf{CK})$.

    \subsection{Regular and normal Post complete extensions}
    \label{Subsection:Regular and Normal Post complete extensions}

    Our investigation in this section will make heavy use of the following special case of a general theorem of Tarski's from \cite[p.~395]{Tarski1934-35Extensions}, previously singled out for employment in modal logic by Segerberg~\cite[p.~712]{Segerberg1972PostCompleteness}:
    
    \begin{lemma}[Segerberg/Tarski]
    \label{Lemma:SegerbergA}
    For any consistent conditional logic \emph{\textsf{L}}, $\rho(\mathsf{L})=1$ iff for all $\varphi\in{Fm}^\bot$, $\vdash_{\mathsf{L}}\varphi$ or $\vdash_{\mathsf{L}}\neg\varphi$.
    \end{lemma}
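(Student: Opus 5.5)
The plan is to prove both directions through one structural fact about Post complete logics: such a logic is determined by which variable-free formulas it contains. Throughout I read $Fm^\bot$ in the statement as the \emph{variable-free} formulas of $\mathcal{L}^\bot$, built from $\bot$ by $\neg,\vee,\boxright$. For formulas containing variables the right-hand side would fail trivially, since no consistent logic proves $p$ or $\neg p$.

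The preliminary step, which is where the real work lies, is a deduction-style description of $\mathsf{L}+\varphi$. I claim that $\psi\in\mathsf{L}+\varphi$ iff there are substitutions $\sigma_1,\dots,\sigma_n$ ($n\ge 0$) with $\vdash_{\mathsf{L}}(\varphi^{\sigma_1}\wedge\dots\wedge\varphi^{\sigma_n})\to\psi$. For the right-to-left direction, the instances $\varphi^{\sigma_i}$ lie in $\mathsf{L}+\varphi$, so $\psi$ follows by modus ponens. For left-to-right, the set of such $\psi$ contains $\mathsf{L}$ and $\varphi$. It is closed under modus ponens: combine the two lists of instances and use tautologies. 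It is closed under uniform substitution: if $\vdash_{\mathsf{L}}\bigwedge_i\varphi^{\sigma_i}\to\psi$, then substituting by $\tau$ gives $\vdash_{\mathsf{L}}\bigwedge_i\varphi^{\sigma_i\tau}\to\psi^\tau$, and each $\varphi^{\sigma_i\tau}$ is again an instance of $\varphi$. A special case is important: if $\chi$ is variable-free, its only instance is $\chi$ itself, so $\mathsf{L}+\chi$ is inconsistent iff $\vdash_{\mathsf{L}}\neg\chi$.

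Next I would establish two facts about an arbitrary Post complete $\mathsf{M}$.

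(a) $\mathsf{M}$ decides every variable-free $\chi$. If $\chi\notin\mathsf{M}$, then $\mathsf{M}+\chi$ is a proper extension of $\mathsf{M}$, hence equals $Fm$ and is inconsistent, so $\vdash_{\mathsf{M}}\neg\chi$ by the special case above.

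(b) For any $\varphi$, we have $\varphi\in\mathsf{M}$ iff every variable-free instance of $\varphi$ lies in $\mathsf{M}$. The forward direction is uniform substitution. For the converse, suppose $\varphi\notin\mathsf{M}$. Then $\mathsf{M}+\varphi$ is inconsistent, so $\vdash_{\mathsf{M}}\neg(\varphi^{\sigma_1}\wedge\dots\wedge\varphi^{\sigma_n})$. Now apply a substitution $\tau$ sending every variable to $\top$. This yields $\vdash_{\mathsf{M}}\neg\bigwedge_i\varphi^{\sigma_i\tau}$, where each $\varphi^{\sigma_i\tau}$ is variable-free. Since $\mathsf{M}$ is consistent, some $\varphi^{\sigma_i\tau}\notin\mathsf{M}$, so some variable-free instance of $\varphi$ is missing from $\mathsf{M}$.

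For $(\Leftarrow)$, assume $\mathsf{L}$ decides every variable-free formula, and let $\mathsf{M}_1,\mathsf{M}_2$ be Post complete extensions of $\mathsf{L}$. Each $\mathsf{M}_j$ is consistent and contains $\mathsf{L}$, so for variable-free $\chi$ we get $\chi\in\mathsf{M}_j$ iff $\vdash_{\mathsf{L}}\chi$. Hence $\mathsf{M}_1$ and $\mathsf{M}_2$ contain the same variable-free formulas, and by (b) they coincide. Lemma~\ref{Lemma:Lindenbaum} gives at least one such extension, so $\rho(\mathsf{L})=1$.

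For $(\Rightarrow)$, argue by contraposition. Suppose some variable-free $\chi$ has $\not\vdash_{\mathsf{L}}\chi$ and $\not\vdash_{\mathsf{L}}\neg\chi$. By the special case, both $\mathsf{L}+\chi$ and $\mathsf{L}+\neg\chi$ are consistent. Lemma~\ref{Lemma:Lindenbaum} extends each to a Post complete logic. These two logics are distinct, since one contains $\chi$ and the other $\neg\chi$, and a consistent logic cannot contain both. Hence $\rho(\mathsf{L})\ge 2$.
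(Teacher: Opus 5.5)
Your proof is correct, and your reading of $Fm^\bot$ as the variable-free formulas matches the paper's definition of $\mathcal{L}^\bot$. The paper gives no proof of this lemma. It imports it as a special case of a general theorem of Tarski on extensions of sentential systems, following Segerberg's use of the same fact for modal logic. So your argument is not a variant of the paper's route; it is a self-contained replacement for the citation.

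The citation buys brevity, and it places the lemma inside Tarski's general theory of which it is an instance. Your route makes explicit the only two ingredients actually needed:
\begin{itemize}
\item[(1)] the description of $\mathsf{L}+\varphi$ by finitely many substitution instances of $\varphi$, available because $\Ext(\mathsf{L})$ involves only modus ponens and uniform substitution;
\item[(2)] the constant $\bot$, which lets you collapse any refutation of $\mathsf{M}+\varphi$ to a variable-free one via $p\mapsto\top$.
\end{itemize}
This shows that nothing about $\boxright$ is used. The same argument therefore gives the analogue for, e.g., the $\omega$-multimodal logics of the appendix, or any classical logic with $\bot$ closed under modus ponens and uniform substitution.

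Your intermediate fact (b), that a Post complete logic is determined by its variable-free members, is what the uniqueness step of Theorem~\ref{Theorem:RegularPCLogics} relies on. One small point: fact (a) is never used in either direction. The $(\Leftarrow)$ direction uses only the hypothesis on $\mathsf{L}$ plus consistency of each $\mathsf{M}_j$, and $(\Rightarrow)$ uses only the variable-free special case and Lemma~\ref{Lemma:Lindenbaum}. You can drop (a).
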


    The first of our principal theorems in this section (Theorem~\ref{Theorem:RegularPCLogics}) is that there are exactly nine Post complete regular conditional logics. They arise from the following incompatible subsets of $Fm^\bot$:
    \begin{equation*}
        \label{RegularSet:Top}
        \tag{top}
        \{\top\boxright\top,\top\boxright\bot,\bot\boxright\top,\bot\boxright\bot\}
    \end{equation*}
    \begin{equation*}
        \label{RegularSet:DIS}
        \tag{disj}
        \{\top\boxright\top,\top\boxright\bot,\bot\boxright\top,\neg(\bot\boxright\bot)\}
    \end{equation*}
    \begin{equation*}
        \label{RegularSet:Antecedent}
        \tag{ant}
        \{\top\boxright\top,\top\boxright\bot,\neg(\bot\boxright\top),\neg(\bot\boxright\bot)\}
    \end{equation*}
    \begin{equation*}
        \label{RegularSet:IMP}
        \tag{imp}
        \{\top\boxright\top,\neg(\top\boxright\bot),\bot\boxright\top,\bot\boxright\bot\}
    \end{equation*}
    \begin{equation*}
        \label{RegularSet:Consequent}
        \tag{con}
        \{\top\boxright\top,\neg(\top\boxright\bot),\bot\boxright\top,\neg(\bot\boxright\bot)\}
    \end{equation*}
    \begin{equation*}
        \label{RegularSet:CON}
        \tag{conj}
        \{\top\boxright\top,\neg(\top\boxright\bot),\neg(\bot\boxright\top),\neg(\bot\boxright\bot)\}
    \end{equation*}
    \begin{equation*}
        \label{RegularSet:NegatedAntecedent}
        \tag{nant}
        \{\neg(\top\boxright\top),\neg(\top\boxright\bot),\bot\boxright\top,\bot\boxright\bot\}
    \end{equation*}
    \begin{equation*}
        \label{RegularSet:NegatedConjunction}
        \tag{nconj}
        \{\neg(\top\boxright\top),\neg(\top\boxright\bot),\bot\boxright\top,\neg(\bot\boxright\bot)\}
    \end{equation*}
    \begin{equation*}
        \label{RegularSet:Bottom}
        \tag{bot}
        \{\neg(\top\boxright\top),\neg(\top\boxright\bot),\neg(\bot\boxright\top),\neg(\bot\boxright\bot)\}
    \end{equation*}
    Essentially, these are all of the ways of accepting or rejecting a conditional holding between $\bot$ and $\top$ that are consistent with \textsf{CR}.\footnote{Note that $\vdash_{\mathsf{CR}}(\bot\boxright\bot)\to(\bot\boxright\top)$ and $\vdash_{\mathsf{CR}}(\top\boxright\bot)\to(\top\boxright\top)$. Incidentally, we focus on the regular conditional logics partly to reinforce the analogy with Segerberg~\cite{Segerberg1972PostCompleteness}, and partly because we regard this as the largest class of conditional logics which imposes some sort of interesting logical structure on systems. However, it would be easy to generalize some of our results to wider classes of conditional logics (e.g., classical conditional logics; see Chellas~\cite[\S4]{Chellas1975Basic}) which recognize more consistent subsets of $Fm^\bot$.} Of these sets, only (\ref{RegularSet:Top}), (\ref{RegularSet:DIS}), (\ref{RegularSet:IMP}), and (\ref{RegularSet:Consequent}) are consistent with \textsf{CK}.

    \begin{lemma}
        \label{Lemma:RegularSets}
        If $\mathsf{L}$ is a regular conditional logic containing any of \emph{(\ref{RegularSet:Top})}--\emph{(\ref{RegularSet:Bottom})}, then for all $\varphi\in{Fm}^\bot$, $\vdash_{\mathsf{L}}\varphi$ or $\vdash_{\mathsf{L}}\neg\varphi$.
    \end{lemma}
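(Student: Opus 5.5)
We argue by induction on the complexity of $\varphi\in Fm^\bot$, proving the stronger claim that every $\varphi\in Fm^\bot$ is \textsf{L}-equivalent to $\top$ or to $\bot$. Precisely: for each $\varphi$, either $\vdash_{\mathsf{L}}\varphi\leftrightarrow\top$ or $\vdash_{\mathsf{L}}\varphi\leftrightarrow\bot$. This gives the lemma at once, since $\vdash_{\mathsf{L}}\varphi\leftrightarrow\top$ yields $\vdash_{\mathsf{L}}\varphi$, and $\vdash_{\mathsf{L}}\varphi\leftrightarrow\bot$ yields $\vdash_{\mathsf{L}}\neg\varphi$. Note that formulas of $Fm^\bot$ contain no variables, so this is a statement about closed formulas only.

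The base case is $\varphi=\bot$, which is trivial. For $\neg\psi$ and $\psi\vee\chi$, the induction hypothesis gives each immediate subformula equivalent to $\top$ or $\bot$. Since \textsf{L} contains all tautologies and is closed under modus ponens, we can replace subformulas by provable equivalents inside truth-functional contexts. Hence $\neg\psi$ and $\psi\vee\chi$ are each equivalent to a truth-functional combination of $\top$ and $\bot$, and thus to $\top$ or to $\bot$.

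The only substantive case is $\varphi=\psi\boxright\chi$. By the induction hypothesis, $\vdash_{\mathsf{L}}\psi\leftrightarrow\alpha$ and $\vdash_{\mathsf{L}}\chi\leftrightarrow\beta$ for some $\alpha,\beta\in\{\top,\bot\}$. Because \textsf{L} is regular, it is closed under \ref{Rule:RCEA} and \ref{Rule:RCEC}. Applying these gives $\vdash_{\mathsf{L}}(\psi\boxright\chi)\leftrightarrow(\alpha\boxright\chi)$ and then $\vdash_{\mathsf{L}}(\alpha\boxright\chi)\leftrightarrow(\alpha\boxright\beta)$, so $\vdash_{\mathsf{L}}(\psi\boxright\chi)\leftrightarrow(\alpha\boxright\beta)$. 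Now $\alpha\boxright\beta$ is one of the four formulas $\top\boxright\top$, $\top\boxright\bot$, $\bot\boxright\top$, $\bot\boxright\bot$. Each of the sets (\ref{RegularSet:Top})--(\ref{RegularSet:Bottom}) decides all four of these, containing either the formula or its negation. So \textsf{L} proves $\alpha\boxright\beta$, making it equivalent to $\top$, or proves $\neg(\alpha\boxright\beta)$, making it equivalent to $\bot$. The claim then transfers to $\psi\boxright\chi$.

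There is no real obstacle here. The only point needing care is that the replacement step uses the two rules exactly, which is why regularity rather than semiregularity is assumed. The axioms \ref{Axiom:CM} and \ref{Axiom:CC} are not needed; they only matter for showing that the nine sets are the consistent ones, which is a separate matter.
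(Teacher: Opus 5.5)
Your proof is correct and takes essentially the same approach as the paper: induction on complexity, where the $\boxright$ case uses \ref{Rule:RCEA} and \ref{Rule:RCEC} to replace $\psi\boxright\chi$ by one of the four formulas $\alpha\boxright\beta$ with $\alpha,\beta\in\{\top,\bot\}$, and each labelled set decides that formula. Stating the induction hypothesis as provable equivalence to $\top$ or to $\bot$ simply handles uniformly the subcase analysis that the paper carries out set by set. Your side remark is also right: only the two rules are used, not \ref{Axiom:CM} or \ref{Axiom:CC}.
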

    \begin{proof}
        The cases where $\varphi$ is of the form $\bot$, $\neg\psi$, or $\psi\vee\theta$ are trivial. When it is of the form $\psi\boxright\theta$, the argument with respect to each set proceeds by considering subcases according to whether $\vdash_{\mathsf{L}}\psi$ and $\vdash_{\mathsf{L}}\theta$, and making appropriate substitutions using \ref{Rule:RCEA} and \ref{Rule:RCEC}. For example, in the case of (\ref{RegularSet:NegatedAntecedent}), if $\vdash_{\mathsf{L}}\psi$ and $\vdash_{\mathsf{L}}\theta$, then $\vdash_{\mathsf{L}}\neg(\psi\boxright\theta)$ since $\vdash_{\mathsf{L}}\neg(\top\boxright\top)$; if $\vdash_{\mathsf{L}}\psi$ and $\not\vdash_{\mathsf{L}}\theta$, then by the induction hypothesis, $\vdash_{\mathsf{L}}\neg\theta$, whence $\vdash_{\mathsf{L}}\neg(\psi\boxright\theta)$ since $\vdash_{\mathsf{L}}\neg(\top\boxright\bot)$; and in the other cases, by analogous reasoning, $\vdash_{\mathsf{L}}\psi\boxright\theta$.
    \end{proof}

    %{\color{blue} Actually, it seems obvious that the foregoing result could be extended to any conditional logic merely closed under \ref{Rule:RCEA} and \ref{Rule:RCEC}; regularity is just paring down the number of possible options.}

    We now describe all of the putative Post complete regular conditional logics. Their characteristic axioms are as follows:
    \begin{equation*}
        \label{Axiom:Top}
        \tag{TOP}
        (\varphi\boxright\psi)\leftrightarrow\top
    \end{equation*}
    \begin{equation*}
        \label{Axiom:Disjunction}
        \tag{DISJ}
        (\varphi\boxright\psi)\leftrightarrow(\varphi\vee\psi)
    \end{equation*}
    \begin{equation*}
        \label{Axiom:Antecedent}
        \tag{ANT}
        (\varphi\boxright\psi)\leftrightarrow\varphi
    \end{equation*}
    \begin{equation*}
        \label{Axiom:Imp}
        \tag{IMP}
        (\varphi\boxright\psi)\leftrightarrow(\varphi\to\psi)
    \end{equation*}
    \begin{equation*}
        \label{Axiom:Consequent}
        \tag{CON}
        (\varphi\boxright\psi)\leftrightarrow\psi
    \end{equation*}
    \begin{equation*}
        \label{Axiom:Conjunction}
        \tag{CONJ}
        (\varphi\boxright\psi)\leftrightarrow(\varphi\wedge\psi)
    \end{equation*}
    \begin{equation*}
        \label{Axiom:NegatedAntecedent}
        \tag{NANT}
        (\varphi\boxright\psi)\leftrightarrow\neg\varphi
    \end{equation*}
    \begin{equation*}
        \label{Axiom:NegatedConjunction}
        \tag{NCONJ}
        (\varphi\boxright\psi)\leftrightarrow(\neg\varphi\wedge\psi)
    \end{equation*}
    \begin{equation*}
        \label{Axiom:Bot}
        \tag{BOT}
        (\varphi\boxright\psi)\leftrightarrow\bot
    \end{equation*}

    \begin{lemma}
        \label{Lemma:RegularPCLogics}
        Where $X$ is any of the immediately previously listed nine axiom schemata, $\mathsf{CR}\oplus{X}$ is a Post complete regular conditional logic.
    \end{lemma}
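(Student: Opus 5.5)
We need to show two things for each $X$: that $\mathsf{CR}\oplus X$ is consistent, and that it has no consistent proper extension. For the latter we cannot invoke Lemma~\ref{Lemma:SegerbergA} directly, since that lemma concerns \emph{all} extensions, not only regular ones. But if we show $\rho(\mathsf{CR}\oplus X)=1$ and that the unique Post complete extension is $\mathsf{CR}\oplus X$ itself, Post completeness follows. So the plan is to establish consistency and then the dichotomy ``$\vdash\varphi$ or $\vdash\neg\varphi$'' for all of $Fm^\bot$, and finally lift it to all of $Fm$.

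\emph{Consistency.} Each axiom $X$ has the form $(\varphi\boxright\psi)\leftrightarrow B(\varphi,\psi)$, where $B$ is a Boolean function of two arguments. Consider the translation $\tau$ that replaces every subformula $\varphi\boxright\psi$ by $B(\varphi,\psi)$, working inside out. Then $\tau$ maps $\mathsf{CR}\oplus X$ into tautologies. The verification has four parts.
\begin{itemize}
\item $\tau$ commutes with substitution.
\item $\tau$ preserves modus ponens, tautologies, and $X$ itself.
\item \ref{Rule:RCEA} and \ref{Rule:RCEC} are preserved, because $B$ is truth-functional and so respects tautological equivalence of arguments.
\item \ref{Axiom:CM} and \ref{Axiom:CC} translate to tautologies.
\end{itemize}
The last point is a finite check over the nine choices of $B$. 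Writing $B(a,b)\leftrightarrow(B(a,\top)\wedge b)\vee(B(a,\bot)\wedge\neg b)$ makes the check easy: we need $B(a,b\wedge c)\leftrightarrow B(a,b)\wedge B(a,c)$. This holds exactly when, for each fixed $a$, $B(a,\cdot)$ is one of the constant $\top$, the identity, or the constant $\bot$. It is easy to see that the nine listed $B$'s are precisely these $3\times 3$ combinations, excluding the cases with $B(a,\cdot)=\bot$ constant but not $\bot$ at $b=\top$; in other words they match the nine sets listed above. Since $\tau(\bot)=\bot$ is not a tautology, the logic is consistent.

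\emph{Dichotomy and maximality.} Instantiating $X$ at $\varphi,\psi\in\{\top,\bot\}$ shows that $\mathsf{CR}\oplus X$ contains the corresponding set among (\ref{RegularSet:Top})--(\ref{RegularSet:Bottom}). Lemma~\ref{Lemma:RegularSets} then gives $\vdash\chi$ or $\vdash\neg\chi$ for every $\chi\in Fm^\bot$. Now let $\mathsf{L}'\supseteq\mathsf{CR}\oplus X$ be consistent, and let $\chi\in\mathsf{L}'$. Using $X$ and replacement of equivalents (available through \ref{Rule:RCEA} and \ref{Rule:RCEC}), we have $\vdash_{\mathsf{CR}\oplus X}\chi\leftrightarrow\tau(\chi)$, and $\tau(\chi)$ is $\boxright$-free.

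If $\chi\notin\mathsf{CR}\oplus X$, then $\tau(\chi)$ is not a tautology. Hence some substitution of $\top/\bot$ for its variables yields a variable-free falsehood. That substitution instance $\chi^\sigma$ lies in $\mathsf{L}'$ (note $\top$ is expressible in $\mathcal{L}$ as $p\vee\neg p$). Moreover $\tau(\chi^\sigma)$ is equivalent to $\bot$, so $\vdash_{\mathsf{CR}\oplus X}\neg\chi^\sigma$, and $\mathsf{L}'$ is inconsistent. This contradicts the choice of $\mathsf{L}'$, so $\mathsf{L}'=\mathsf{CR}\oplus X$.

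The main obstacle is the bookkeeping in the \ref{Axiom:CM}/\ref{Axiom:CC} check and in the equivalence $\chi\leftrightarrow\tau(\chi)$, where replacement inside nested conditionals needs \ref{Rule:RCEA}/\ref{Rule:RCEC} by induction. Both are routine.
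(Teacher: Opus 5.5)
Your proposal is correct and follows essentially the paper's route. You use the same translation, replacing $\varphi\boxright\psi$ by the right-hand side of $X$, and the fact that it sends theorems to tautologies to get consistency. You then use $\vdash_{\mathsf{CR}\oplus X}\chi\leftrightarrow\tau(\chi)$ (via \ref{Rule:RCEA}, \ref{Rule:RCEC} and $X$) to reduce maximality to the Post completeness of \textsf{PL}, which you verify inline by a $\top/\bot$ substitution.

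Two small points do not affect correctness. The appeal to Lemma~\ref{Lemma:RegularSets} and the dichotomy is never used in your maximality argument and can be dropped. The ``excluding'' clause in your \ref{Axiom:CM}/\ref{Axiom:CC} check is garbled: the only unary Boolean function excluded is negation.
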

    \begin{proof}
        It must be proved that each system $\textsf{CR}\oplus{X}$ is consistent but has no consistent proper extensions. Both claims follow by showing translation results for each system $\textsf{CR}\oplus{X}$. The translation translates formulae of $Fm$ into the $\boxright$-free language. The translation varies by system (which we individuate by the characteristic axiom $X$):
        \begin{itemize}
            \item[$p$.] $p^{\tau(X)} = p$;
            \item[$\bot$.] $\bot^{\tau(X)} = \bot$;
            \item[$\neg$.] $(\neg\varphi)^{\tau(X)} = \neg\varphi^{\tau(X)}$;
            \item[$\vee$.] $(\varphi\vee\psi)^{\tau(X)} = \varphi^{\tau(X)} \vee \psi^{\tau(X)}$;
            \item[$\boxright$.] $(\varphi\boxright\psi)^{\tau(X)} = RHS(X)^{\tau(X)}$.
        \end{itemize}
        By $RHS(X)^{\tau(X)}$ is intended the translation of the right-hand side of the biconditional in the axiom $X$. For example, for $\textsf{CR}\oplus{\text{\ref{Axiom:Conjunction}}}$, $(\varphi\boxright\psi)^{\tau(\text{\ref{Axiom:Conjunction}})} = (\varphi\wedge\psi)^{\tau(\text{\ref{Axiom:Conjunction}})} = \varphi^{\tau(\text{\ref{Axiom:Conjunction}})}\wedge\psi^{\tau(\text{\ref{Axiom:Conjunction}})}$. 
        
        To show consistency, it is shown by induction on the length of proof that $\vdash_{\textsf{CR}\oplus{X}}\varphi$ implies $\vdash_{\mathsf{PL}}\varphi^{\tau(X)}$. It follows that $\textsf{CR}\oplus{X}$ is consistent (else \textsf{PL} would be inconsistent). To show Post completeness, it is shown by induction that $\vdash_{\textsf{CR}\oplus{X}}\varphi$ iff $\vdash_{\textsf{CR}\oplus{X}}\varphi^{\tau(X)}$. Now suppose for contradiction that $\textsf{CR}\oplus{X}$ had a consistent proper extension; then for some formula $\psi$, $\not\vdash_{\textsf{CR}\oplus{X}}\psi$ but $(\mathsf{CR}\oplus{X})+\psi$ is consistent, whence $(\mathsf{CR}\oplus{X})+\psi^{\tau(X)}$ is consistent. Then $\not\vdash_{\textsf{CR}\oplus{X}}\psi^{\tau(X)}$, whence $\psi^{\tau(X)}\not\in\textsf{PL}$. But \textsf{PL} is Post complete (in the $\boxright$-free language), so $\textsf{PL}+\psi^{\tau(X)}$ is inconsistent, and therefore $(\mathsf{CR}\oplus{X})+\psi^{\tau(X)}$ must be inconsistent.
    \end{proof}

\noindent
     As the formulas in each set $(x)$ are derivable from the corresponding axiom $X$ in $\mathsf{CR} \oplus X$, we obtain the following lemma.

    \begin{lemma}
        \label{Lemma:RegularSetsInclusion}
        For each set labelled ($x$) and corresponding axiom $X$, $(x)\subseteq\mathsf{CR}\oplus{X}$.
    \end{lemma}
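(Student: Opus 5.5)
The argument is a direct verification. Each set $(x)$ has four formulas, each of the form $\alpha\boxright\beta$ or $\neg(\alpha\boxright\beta)$ with $\alpha,\beta\in\{\top,\bot\}$. So for each of the nine axioms $X$ I will substitute $\alpha$ for $\varphi$ and $\beta$ for $\psi$ in the schema $X$. Since $X$ is a schema (equivalently, $\mathsf{CR}\oplus X$ is closed under uniform substitution and $\top,\bot$ are formulas of $Fm$), this gives the theorem $(\alpha\boxright\beta)\leftrightarrow RHS(X)[\alpha/\varphi,\beta/\psi]$ of $\mathsf{CR}\oplus X$.

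The right-hand side is a Boolean combination of $\top$ and $\bot$, so it is either a tautology or the negation of one. Every conditional logic contains all tautologies and is closed under modus ponens. Hence if the right-hand side is tautologous we obtain $\vdash\alpha\boxright\beta$, and if it is contradictory we obtain $\vdash\neg(\alpha\boxright\beta)$ by propositional reasoning.

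It remains to check, for each $X$, that the truth value of $RHS(X)$ at $(\varphi,\psi)=(\top,\top),(\top,\bot),(\bot,\top),(\bot,\bot)$ matches the pattern of the corresponding set. This is a truth-table lookup:
\begin{itemize}
\item \textsf{DISJ}: $\varphi\vee\psi$ takes values $1,1,1,0$, matching (disj).
\item \textsf{ANT}: values $1,1,0,0$.
\item \textsf{IMP}: values $1,0,1,1$.
\item \textsf{CON}: values $1,0,1,0$.
\item \textsf{CONJ}: values $1,0,0,0$.
\item \textsf{NANT}: values $0,0,1,1$.
\item \textsf{NCONJ}: $\neg\varphi\wedge\psi$ takes values $0,0,1,0$.
\item \textsf{TOP} and \textsf{BOT}: constant $1$ and constant $0$ respectively.
\end{itemize}
Each of these matches the signed formulas listed in the corresponding set.

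There is one wrinkle in the setting. The sets live in $Fm^\bot$ and $\top$ is defined as $\neg\bot$, so the substitution instances are ordinary formulas. If one insists that $\mathsf{CR}\oplus X\subseteq Fm$ must avoid $\bot$, then $\top,\bot$ can be replaced by $p\vee\neg p$ and $p\wedge\neg p$, using \ref{Rule:RCEA} and \ref{Rule:RCEC} to pass between provably equivalent antecedents and consequents. I expect no real obstacle; the only care needed is matching the truth-table rows correctly, nine times.
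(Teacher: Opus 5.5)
Your proof is correct and follows the paper's approach: the paper only remarks that each $(x)$ is derivable from $X$ in $\mathsf{CR}\oplus X$, and you carry this out by instantiating $X$ at $\varphi,\psi\in\{\top,\bot\}$, evaluating the variable-free right-hand side truth-functionally, and checking all nine truth-table patterns correctly. Your closing ``wrinkle'' is unnecessary: $\bot$ is primitive in $\mathcal{L}$ too, and $Fm^\bot$ is just the variable-free fragment of $Fm$, so the sets $(x)$ already lie in $Fm$.
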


    \begin{theorem}
        \label{Theorem:RegularPCLogics}
        There are exactly nine Post complete regular conditional logics, and they are the systems $\mathsf{CR}\oplus{X}$, for the axioms $X$ enumerated above.
    \end{theorem}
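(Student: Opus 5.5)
The plan has two parts. Lemma~\ref{Lemma:RegularPCLogics} already shows that each of the nine systems $\mathsf{CR}\oplus X$ is Post complete and regular. It remains to show that these nine are pairwise distinct and that no other Post complete regular conditional logic exists.

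For distinctness I would use Lemma~\ref{Lemma:RegularSetsInclusion}. If $\mathsf{CR}\oplus X=\mathsf{CR}\oplus Y$ with $X\neq Y$, this logic would contain both $(x)$ and $(y)$. The sets are pairwise incompatible: any two differ on some formula among $\top\boxright\top$, $\top\boxright\bot$, $\bot\boxright\top$, $\bot\boxright\bot$, with one containing the formula and the other its negation. The logic would then prove both a formula and its negation, hence $\bot$, contradicting the consistency established in Lemma~\ref{Lemma:RegularPCLogics}.

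For exhaustiveness, let $\mathsf{L}$ be any Post complete regular conditional logic. By Lemma~\ref{Lemma:SegerbergA}, since $\rho(\mathsf{L})=1$ (its only consistent extension is itself), for each of the four variable-free formulas $\top\boxright\top$, $\top\boxright\bot$, $\bot\boxright\top$, $\bot\boxright\bot$, $\mathsf{L}$ proves either the formula or its negation. This gives at most $16$ combinations. The footnoted $\mathsf{CR}$-theorems $(\bot\boxright\bot)\to(\bot\boxright\top)$ and $(\top\boxright\bot)\to(\top\boxright\top)$ rule out the combinations that accept $\bot\boxright\bot$ but reject $\bot\boxright\top$, or accept $\top\boxright\bot$ but reject $\top\boxright\top$. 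That leaves $3\times3=9$ combinations, exactly the nine sets $(x)$. So $\mathsf{L}\supseteq(x)$ for some $x$.

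To conclude $\mathsf{L}=\mathsf{CR}\oplus X$, I would consider $\mathsf{L}'=\mathsf{L}\cap(\mathsf{CR}\oplus X)$ rather than showing directly that $\mathsf{L}$ proves $X$. Alternatively, and more simply, I would argue as follows. Both $\mathsf{L}$ and $\mathsf{CR}\oplus X$ are regular, contain $(x)$, and are consistent. By Lemma~\ref{Lemma:RegularSets} applied to the regular logic $\mathsf{CR}\oplus(x)$ (the smallest regular logic containing $(x)$), every $\varphi\in Fm^\bot$ is decided there. This logic is contained in both $\mathsf{L}$ and $\mathsf{CR}\oplus X$, and it is consistent because it is contained in the consistent $\mathsf{CR}\oplus X$. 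By Lemma~\ref{Lemma:SegerbergA}, $\rho(\mathsf{CR}\oplus(x))=1$. Its unique Post complete extension is $\mathsf{CR}\oplus X$ by Lemma~\ref{Lemma:RegularPCLogics}, and $\mathsf{L}$ is also a Post complete extension of it, so $\mathsf{L}=\mathsf{CR}\oplus X$.

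The main obstacle is checking the footnote's two $\mathsf{CR}$-theorems. For $(\bot\boxright\bot)\to(\bot\boxright\top)$: from $\bot\boxright\bot$, \ref{Rule:RCEC} gives $\bot\boxright(\bot\wedge\top)$, and \ref{Axiom:CM} then yields $\bot\boxright\top$. The same argument works with antecedent $\top$. The remaining steps only combine the prior lemmas.
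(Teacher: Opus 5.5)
Your proposal is correct and follows the paper's proof essentially step for step. You get distinctness from the pairwise incompatibility of the nine sets; you get exhaustiveness by using Lemma~\ref{Lemma:SegerbergA} to decide the four $\top/\bot$ conditionals, with the two $\mathsf{CR}$-theorems cutting the sixteen combinations to nine; and you identify $\mathsf{L}$ with $\mathsf{CR}\oplus X$ as the unique Post complete extension of $\mathsf{CR}\oplus(x)$ via Lemmas~\ref{Lemma:RegularSets} and \ref{Lemma:SegerbergA}. You also make explicit two points the paper leaves tacit: the consistency of $\mathsf{CR}\oplus(x)$, which is needed to invoke Lemma~\ref{Lemma:SegerbergA}, and the derivation of the footnoted theorems. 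The undeveloped aside about $\mathsf{L}\cap(\mathsf{CR}\oplus X)$ can simply be deleted.
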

    \begin{proof}
        That there must be at least nine Post complete regular conditional logics follows from Lemmas~\ref{Lemma:RegularPCLogics} and \ref{Lemma:RegularSetsInclusion}, noting that each of the nine sets are logically incompatible with one another.

        To show that these are the only Post complete regular conditional logics, consider any Post complete conditional logic $\textsf{L}\in\RExt(\mathsf{CR})$. Clearly, $\rho(\mathsf{L})=1$, so by Lemma~\ref{Lemma:SegerbergA}, $\vdash_{\mathsf{L}}\top\boxright\top$ or $\vdash_{\mathsf{L}}\neg(\top\boxright\top)$; $\vdash_{\mathsf{L}}\top\boxright\bot$ or $\vdash_{\mathsf{L}}\neg(\top\boxright\bot)$; $\vdash_{\mathsf{L}}\bot\boxright\top$ or $\vdash_{\mathsf{L}}\neg(\bot\boxright\top)$; and $\vdash_{\mathsf{L}}\bot\boxright\bot$ or $\vdash_{\mathsf{L}}\neg(\bot\boxright\bot)$. Certain combinations are impossible in any regular conditional logic (e.g., $\vdash_{\mathsf{L}}\bot\boxright\bot$ and $\vdash_{\mathsf{L}}\neg(\bot\boxright\top)$). The possible combinations are those enumerated in the labelled sets, whence it is clear that \textsf{L} must contain one of the sets $(x)$. Then \textsf{L} and $\textsf{CR}\oplus{X}$ are both Post complete extensions of $\textsf{CR}\oplus(x)$; by Lemmas~\ref{Lemma:SegerbergA} and \ref{Lemma:RegularSets}, $\textsf{L}=\textsf{CR}\oplus{X}$.
    \end{proof}

    \begin{corollary}
        \label{Corollary:NormalPCLogics}
        There are exactly four Post complete normal conditional logics: $\mathsf{CR}\oplus{\text{\em \ref{Axiom:Top}}}$, $\mathsf{CR}\oplus{\text{\em \ref{Axiom:Disjunction}}}$, $\mathsf{CR}\oplus{\text{\em \ref{Axiom:Imp}}}$, and $\mathsf{CR}\oplus{\text{\em \ref{Axiom:Consequent}}}$.
    \end{corollary}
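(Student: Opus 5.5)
The plan is to derive the corollary from Theorem~\ref{Theorem:RegularPCLogics}. Every normal conditional logic is regular, so every Post complete normal conditional logic is a consistent regular logic with no consistent proper extensions. Post completeness is defined relative to all extensions, not just normal ones, so such a logic is a Post complete regular conditional logic. By the theorem it is therefore one of the nine systems $\mathsf{CR}\oplus X$. It then remains to decide which of these nine systems are normal, i.e.\ contain \ref{Axiom:CN}, $\varphi\boxright\top$. Those that are normal are automatically Post complete normal logics, since they are Post complete outright.

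Step one: the five systems whose characteristic set contains $\neg(\top\boxright\top)$ or $\neg(\bot\boxright\top)$ are excluded. These are (\ref{RegularSet:Antecedent}), (\ref{RegularSet:CON}), (\ref{RegularSet:NegatedAntecedent}), (\ref{RegularSet:NegatedConjunction}) and (\ref{RegularSet:Bottom}). By Lemma~\ref{Lemma:RegularSetsInclusion}, each such system proves one of these negations. Substituting $\top$ or $\bot$ for $\varphi$ in \ref{Axiom:CN} gives $\top\boxright\top$ and $\bot\boxright\top$. So if the system were normal it would prove both a formula and its negation. That contradicts its consistency, which is given by Lemma~\ref{Lemma:RegularPCLogics}.

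Step two: the remaining four systems, \ref{Axiom:Top}, \ref{Axiom:Disjunction}, \ref{Axiom:Imp} and \ref{Axiom:Consequent}, do contain \ref{Axiom:CN}. In each case this is a direct instance of the characteristic axiom combined with propositional logic. For \ref{Axiom:Top}, $\varphi\boxright\top$ is equivalent to $\top$. For \ref{Axiom:Disjunction}, it is equivalent to $\varphi\vee\top$. For \ref{Axiom:Imp}, it is equivalent to $\varphi\to\top$. For \ref{Axiom:Consequent}, it is equivalent to $\top$. All four right-hand sides are tautologies, so each of these systems is a normal conditional logic and, being Post complete, is one of the logics we want. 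The same four systems are the ones the paper flagged as the only sets consistent with $\mathsf{CK}$.

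I do not expect a real obstacle. The one point to state carefully is the direction of the inclusion of classes. Post completeness is absolute, not relative to the lattice $\NExt(\mathsf{CK})$, so restricting from regular to normal logics loses nothing and cannot create new coatoms.
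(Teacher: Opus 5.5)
Your proposal is correct and follows the paper's own argument. You reduce to the nine Post complete regular logics of Theorem~\ref{Theorem:RegularPCLogics}, check that the four listed systems prove \ref{Axiom:CN}, and exclude the other five because \ref{Axiom:CN} would yield $\top\boxright\top$ or $\bot\boxright\top$, contradicting their consistency; in doing so you supply the details the paper leaves as ``easy to show''.
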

    \begin{proof}
        It is easy to show that each of these four systems contains \ref{Axiom:CN}, and therefore is normal. It is also easy to show that all of the other Post complete regular conditional logics cannot be consistently extended by \ref{Axiom:CN}, and hence cannot be normal. 
    \end{proof}

    \begin{corollary}
        \label{Corollary:NormalVSLogics}
        There are exactly two Post complete variably strict conditional logics: $\mathsf{CR}\oplus{\text{\em \ref{Axiom:Top}}}$ and  $\mathsf{CR}\oplus{\text{\em \ref{Axiom:Imp}}}$.
    \end{corollary}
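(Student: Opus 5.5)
Variably strict logics are normal, so by Corollary~\ref{Corollary:NormalPCLogics} every Post complete variably strict conditional logic is among $\mathsf{CR}\oplus\text{\ref{Axiom:Top}}$, $\mathsf{CR}\oplus\text{\ref{Axiom:Disjunction}}$, $\mathsf{CR}\oplus\text{\ref{Axiom:Imp}}$ and $\mathsf{CR}\oplus\text{\ref{Axiom:Consequent}}$. Conversely, a Post complete logic that happens to be variably strict is Post complete within the variably strict family, since any consistent variably strict extension is in particular a consistent extension. So the task reduces to deciding which of these four systems contain \ref{Axiom:ID}, \ref{Axiom:MOD}, \ref{Axiom:SegerbergExportation}, \ref{Axiom:SegerbergImport} and \ref{Axiom:CMonotonicity}.

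To rule out \ref{Axiom:Disjunction} and \ref{Axiom:Consequent}, I will use the translation $\tau(X)$ from the proof of Lemma~\ref{Lemma:RegularPCLogics}. That proof shows $\vdash_{\mathsf{CR}\oplus X}\varphi$ implies $\vdash_{\mathsf{PL}}\varphi^{\tau(X)}$, so it suffices to find an axiom whose translation is not a tautology.
\begin{itemize}
\item[] For \ref{Axiom:Consequent}, \ref{Axiom:ID} translates to $p$, which is not a tautology.
\item[] For \ref{Axiom:Disjunction}, \ref{Axiom:ID} translates to $p\vee p$, which is fine, so I switch to \ref{Axiom:MOD}. Its translation is $(\neg q\vee\bot)\to(p\vee q)$, i.e.\ $\neg q\to(p\vee q)$, which fails when $p$ and $q$ are both false.
\end{itemize}

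For the positive direction, I show every axiom of a variably strict logic is provable in $\mathsf{CR}\oplus\text{\ref{Axiom:Top}}$ and in $\mathsf{CR}\oplus\text{\ref{Axiom:Imp}}$. Normality is already given by Corollary~\ref{Corollary:NormalPCLogics}.
\begin{itemize}
\item[] Under \ref{Axiom:Top} every conditional is provably equivalent to $\top$, so each of \ref{Axiom:ID}, \ref{Axiom:MOD}, \ref{Axiom:SegerbergExportation}, \ref{Axiom:SegerbergImport} and \ref{Axiom:CMonotonicity} has a theorem as its consequent. It is therefore provable by replacement of equivalents, which is available by propositional reasoning together with the biconditional axiom.
\item[] Under \ref{Axiom:Imp} each axiom is provably equivalent to the formula obtained by replacing every $\alpha\boxright\beta$ with $\alpha\to\beta$. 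So it suffices to check that these replacements are tautologies. \ref{Axiom:ID} becomes $\varphi\to\varphi$. \ref{Axiom:MOD} becomes $(\neg\psi\to\bot)\to(\varphi\to\psi)$. \ref{Axiom:SegerbergExportation} becomes $((\varphi\wedge\psi)\to\theta)\to(\varphi\to(\psi\to\theta))$. \ref{Axiom:SegerbergImport} has the shape $A\to(B\to C)$ with $C=(\varphi\wedge\psi)\to\theta$ and $B=\varphi\to(\psi\to\theta)$, and it holds because $B\to C$ is already a tautology. \ref{Axiom:CMonotonicity} becomes $((\varphi\to\psi)\wedge(\varphi\to\theta))\to((\varphi\wedge\psi)\to\theta)$. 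All of these are tautologies.
\end{itemize}
Here I rely on the fact that the $\tau$-translation argument of Lemma~\ref{Lemma:RegularPCLogics} gives $\vdash_{\mathsf{CR}\oplus X}\varphi\leftrightarrow\varphi^{\tau(X)}$.

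The main obstacle is this replacement step. It needs an inductive argument that substituting provable equivalents inside the scope of $\boxright$ preserves provability, which uses \ref{Rule:RCEA} and \ref{Rule:RCEC}. Once that is in place, both systems are variably strict and Post complete, and the other two are excluded, which gives exactly two.
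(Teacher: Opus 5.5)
Your proof is correct and follows the paper's route. You reduce via Corollary~\ref{Corollary:NormalPCLogics} to the four normal candidates, verify that $\mathsf{CR}\oplus\text{\ref{Axiom:Top}}$ and $\mathsf{CR}\oplus\text{\ref{Axiom:Imp}}$ contain the variably strict axioms, and exclude the other two. One slip: $p\vee p$ is not a tautology (it is equivalent to $p$), so \ref{Axiom:ID} already excludes $\mathsf{CR}\oplus\text{\ref{Axiom:Disjunction}}$, exactly as in the paper; your detour through \ref{Axiom:MOD} is valid but unnecessary.
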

    \begin{proof}
        It is straightforward to show that these two systems are variably strict, and to show that neither $\mathsf{CR}\oplus{\text{\ref{Axiom:Disjunction}}}$ nor $\mathsf{CR}\oplus{\text{\ref{Axiom:Consequent}}}$ can consistently be extended by \ref{Axiom:ID}.
    \end{proof}

    One of our normal Post complete conditional logics can be found in the literature on conditional logic already. $\mathsf{CR}\oplus{\text{\ref{Axiom:Imp}}}$ is equivalent to the system \textsf{VCA} of Lewis~\cite[pp.~130--131]{Lewis1973Counterfactuals}, who describes it as ``truth-functional logic in disguise''. However, as the foregoing results indicate, truth-functional logic can come in several disguises.\footnote{In this sense, the diagram of variably strict conditional logics given in \cite[p.~131]{Lewis1973Counterfactuals} is notably incomplete. However, if we restrict our attention to variably strict conditional logics satisfying $(\varphi\boxright\psi)\to(\varphi\to\psi)$, then it is clear that \textsf{VCA} is the only Post complete conditional logic in that class.}

    We turn now to proving analogues for regular and normal conditional logic of some embedding results from modal logic due to Makinson~\cite{Makinson1971} (cf.~Segerberg~\cite[p.~712]{Segerberg1972PostCompleteness}).

    \begin{theorem}
        \label{Theorem:RegularSublogics}
        Every consistent regular conditional logic $\mathsf{L}$ is contained in at least one of the nine Post complete regular conditional logics.
    \end{theorem}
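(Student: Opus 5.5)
The plan is to mimic Makinson's argument for normal modal logics: find, for a given consistent regular $\mathsf{L}$, a set $(x)$ among (\ref{RegularSet:Top})--(\ref{RegularSet:Bottom}) such that $\mathsf{L}\cup(x)$ is consistent, and then show $\mathsf{L}\subseteq\mathsf{CR}\oplus X$. The natural device is a ``collapsing'' substitution argument in the spirit of Makinson's use of the trivial and verum frames. Fix $X$ and let $\tau(X)$ be the translation from the proof of Lemma~\ref{Lemma:RegularPCLogics}. For each $X$, $\vdash_{\mathsf{CR}\oplus X}\varphi\leftrightarrow\varphi^{\tau(X)}$, and $\varphi^{\tau(X)}$ is $\boxright$-free. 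Consequently $\mathsf{L}\subseteq\mathsf{CR}\oplus X$ iff $\varphi^{\tau(X)}\in\mathsf{PL}$ for every $\varphi\in\mathsf{L}$. Post completeness of $\mathsf{CR}\oplus X$ also gives $\mathsf{L}\subseteq\mathsf{CR}\oplus X$ iff $\mathsf{L}+(\mathsf{CR}\oplus X)$ is consistent.

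Step one reduces the question to closed formulas. The key claim is that $\mathsf{L}\subseteq\mathsf{CR}\oplus X$ as soon as every variable-free instance in $\mathsf{L}$ is in $\mathsf{CR}\oplus X$. Suppose $\varphi\in\mathsf{L}$ but $\varphi^{\tau(X)}\notin\mathsf{PL}$. Pick a falsifying classical valuation and substitute $\top$ or $\bot$ for each variable accordingly, obtaining a closed $\varphi^\sigma\in\mathsf{L}\subseteq$ closed theorems, since $\mathsf{L}$ is closed under substitution. The translation commutes with substitution, so $(\varphi^\sigma)^{\tau(X)}=(\varphi^{\tau(X)})^\sigma$, which evaluates to false and hence is not a theorem. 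So it suffices to compare $\mathsf{L}\cap Fm^\bot$ with $\mathsf{CR}\oplus X$.

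Step two picks $(x)$. Consider the four closed atoms $\top\boxright\top$, $\top\boxright\bot$, $\bot\boxright\top$, $\bot\boxright\bot$. By a Lindenbaum argument for theories (not logics), $\mathsf{L}\cap Fm^\bot$ is a consistent set, since $\bot\notin\mathsf{L}$. Extend it to a maximal $\mathsf{L}$-consistent set $\Delta$ of formulas. Then $\Delta$ decides each of the four atoms. Because $\mathsf{L}\supseteq\mathsf{CR}$, the combination respects $(\bot\boxright\bot)\to(\bot\boxright\top)$ and $(\top\boxright\bot)\to(\top\boxright\top)$, so it matches one of the nine sets $(x)$. Now show by induction on closed $\psi\in Fm^\bot$ that $\psi\in\Delta$ iff $\vdash_{\mathsf{CR}\oplus X}\psi$. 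Boolean cases are immediate from maximality. For $\psi=\alpha\boxright\beta$, the induction hypothesis and Lemma~\ref{Lemma:RegularSets} say $\alpha$ is $\mathsf{CR}\oplus X$-equivalent to $\top$ or $\bot$, and likewise $\beta$. One would like to replace $\alpha,\beta$ inside $\Delta$ by $\top/\bot$ using \ref{Rule:RCEA} and \ref{Rule:RCEC}.

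This is the main obstacle. The rules apply only to \emph{theorems} of $\mathsf{L}$, whereas $\alpha\leftrightarrow\top$ merely lying in $\Delta$ does not license substitution under $\boxright$. The fix is to use the fact that closed formulas built from $\top,\bot$ in a regular logic are provably equivalent in $\mathsf{L}$ itself to truth-functional combinations of the four atoms. Pushing inward, one would first prove in $\mathsf{L}$ that each closed $\alpha$ of modal depth $n$ is equivalent to a Boolean combination of depth-$1$ atoms. That seems false in general, though: $(\top\boxright\bot)\boxright\top$ is not reducible in $\mathsf{CR}$.

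So I would instead choose $\Delta$ cleverly. I would work with the valuation $v$ on closed formulas defined by recursion: $v(\alpha\boxright\beta)$ is the value prescribed by $(x)$ at $(v(\alpha),v(\beta))$. This is exactly truth in $\mathsf{CR}\oplus X$. I would then show that some choice of $X$ makes $v(\psi)=1$ for all closed $\psi\in\mathsf{L}$, by exhibiting a one-world neighborhood frame realizing $v$ that validates $\mathsf{L}$. The last step needs the fact, promised in the paper, that every consistent regular logic has a neighborhood frame. A one-point frame's logic equals $\mathsf{CR}\oplus X$ for the $X$ read off $\mathcal{N}(w,\emptyset),\mathcal{N}(w,W)$. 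Hence the crux is proving that $\mathsf{L}$ is valid on some one-point regular neighborhood frame, which I would attempt via a canonical-style construction collapsed by substituting constants.
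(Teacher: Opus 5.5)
\textbf{There is a genuine gap, and it cannot be filled, because the statement is false.}

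Your Step one (reducing $\mathsf{L}\subseteq\mathsf{CR}\oplus X$ to closed formulas by a constant substitution) is correct. The obstacle you isolate in Step two is also real. But the proposal never gets past it. Your final ``crux'' is that $\mathsf{L}$ is valid on some one-point regular neighborhood frame. By your own correct remark, the one-point frames are exactly the nine $\mathsf{CR}\oplus X$, so this is just a restatement of the theorem. Appealing to the corollary that every consistent regular logic has a neighborhood frame is circular, since the paper derives it from this very theorem. It would not help anyway: a frame for $\mathsf{L}$ need not collapse onto a single point, because that requires its closed-formula algebra to have a two-element quotient. No ``canonical-style construction'' can close this.

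\textbf{Counterexample.} Let $W=\{u,w\}$ and define $\pi(u,Z)=\emptyset$ if $w\in Z$, $\pi(u,Z)=W$ otherwise; $\pi(w,Z)=\emptyset$ if $u\notin Z$, $\pi(w,Z)=W$ otherwise. The logic $\mathsf{L}$ of this proposition selection frame is consistent and normal, hence regular. Closure under US, RCEA and RCEC is immediate since $\pi$ is indexed by propositions.

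Since $\models_u\varphi\boxright\bot$ iff $w\in[\varphi]$, and $\models_w\varphi\boxright\bot$ iff $u\notin[\varphi]$, we get $[\top\boxright\bot]=\{u\}$, $[\bot\boxright\bot]=\{w\}$ and $[(\top\boxright\bot)\boxright\bot]=\emptyset$. So $\mathsf{L}$ contains
\[
\alpha:=(\top\boxright\bot)\leftrightarrow\neg(\bot\boxright\bot)\qquad\text{and}\qquad\beta:=\neg((\top\boxright\bot)\boxright\bot).
\]
Write $f_X$ for the truth table of the right-hand side of $X$. Then $\alpha^{\tau(X)}$ is true only if $f_X(1,0)\neq f_X(0,0)$, i.e., only for DISJ, ANT, IMP and NANT. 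For each of these four, $f_X(f_X(1,0),0)=1$, so $\beta^{\tau(X)}$ is false. Since $\vdash_{\mathsf{CR}\oplus X}\chi$ implies $\chi^{\tau(X)}\in\mathsf{PL}$, we get $\alpha\wedge\beta\notin\mathsf{CR}\oplus X$ for all nine $X$. Hence $\mathsf{L}$ is contained in none of them. The same frame also refutes Corollary~\ref{Corollary:NormalSublogics}.

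\textbf{Where the paper's proof fails.} It breaks at exactly the step you flagged. It shows that some set $(x)$ is $\mathsf{L}$-consistent; here (disj) holds at $u$ and (imp) holds at $w$. It then asserts that $\mathsf{L}\oplus(x)$ is consistent. But once $\top\boxright\bot$ (resp.\ $\neg(\top\boxright\bot)$) is a theorem, RCEA together with the remaining members of $(x)$ yields $(\top\boxright\bot)\boxright\bot$, contradicting $\beta$. Consistency of $(x)$ as a set of premises does not survive closure under the rules.
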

    \begin{proof}
        Consider any consistent system $\mathsf{L}\in\RExt{(\mathsf{CR})}$. It cannot be that all of the nine labeled sets (\ref{RegularSet:Top})--(\ref{RegularSet:Bottom}) are \textsf{L}-inconsistent. For suppose the contrary, namely that for all $(x)$, $(x) \vdash_\mathsf{L} \bot$. Then:
\begin{flalign*}
&\text{(\ref{RegularSet:Top})} \vdash_\mathsf{L} \bot \text{ and } \text{(\ref{RegularSet:DIS})}\vdash_\mathsf{L} \bot \text{ imply} && \top \boxright \top, \top \boxright \bot \vdash_\mathsf{L} \neg (\bot \boxright \top) && (i) \\
&\text{(\ref{RegularSet:DIS})} \vdash_\mathsf{L} \bot \text{ and } \text{(\ref{RegularSet:Antecedent})}\vdash_\mathsf{L} \bot \text{ imply} && \top \boxright \top, \top \boxright \bot \vdash_\mathsf{L} \bot \boxright \bot && (ii) \\
&\text{(\ref{RegularSet:NegatedAntecedent})} \vdash_\mathsf{L} \bot \text{ and } \text{(\ref{RegularSet:IMP})}\vdash_\mathsf{L} \bot \text{ imply} && \bot \boxright \top, \bot \boxright \bot \vdash_\mathsf{L} \top \boxright \bot && (iii)\\
&\text{(\ref{RegularSet:CON})} \vdash_\mathsf{L} \bot \text{ and } \text{(\ref{RegularSet:Consequent})}\vdash_\mathsf{L} \bot \text{ imply} && \top \boxright \top, \neg(\top \boxright \bot) \vdash_\mathsf{L} \bot \boxright \bot && (iv)\\
&\text{(\ref{RegularSet:NegatedConjunction})} \vdash_\mathsf{L} \bot \text{ and } \text{(\ref{RegularSet:Bottom})}\vdash_\mathsf{L} \bot \text{ imply} && \neg(\top \boxright \top), \neg(\top \boxright \bot) \vdash_\mathsf{L} \bot \boxright \bot && (v)
\end{flalign*}
\noindent
Furthermore, since $\mathsf{L}$ is regular, the following hold:
\begin{flalign*}
& && \vdash_\mathsf{L} (\top \boxright \bot) \to (\top \boxright \top) && (vi) \\
& && \vdash_\mathsf{L} (\bot \boxright \bot) \to (\bot \boxright \top) && (vii)
\end{flalign*}
\noindent
Then, observe that:
\begin{flalign*}
& (iv) \text{ and } (v) \text{ imply}&& \neg (\top \boxright \bot)\vdash_\mathsf{L} \bot \boxright \bot && (viii) \\
& (i) \text{ and } (vi) \text{ imply}&& \top \boxright \bot \vdash_\mathsf{L} \neg (\bot \boxright \top) && (ix) \\
& (ii) \text{ and } (vi) \text{ imply}&& \top \boxright \bot \vdash_\mathsf{L} \bot \boxright \bot && (x) \\
& (iii) \text{ and } (vii) \text{ imply}&& \bot \boxright \bot \vdash_\mathsf{L} \top \boxright \bot && (xi) 
\end{flalign*}
Finally:
\begin{flalign*}
& (viii) \text{ and } (x) \text{ imply}&& \vdash_\mathsf{L} \bot \boxright \bot && (xii) \\
& (ix) \text{ and } (xi) \text{ imply}&& \bot \boxright \bot \vdash_\mathsf{L} \neg (\bot \boxright \top),\text{ whence, }\vdash_\mathsf{L}\neg(\bot \boxright \top) && (xiii) 
\end{flalign*}
Thus, by $(xii)$ and $(xiii)$, $\vdash_\mathsf{L} (\bot \boxright \bot) \wedge \neg (\bot \boxright \top)$, that is, $\vdash_\mathsf{L} \neg ((\bot \boxright \bot) \to (\bot \boxright \top))$, which contradicts $(vii)$.

Therefore, \textsf{L} is contained in a consistent system $\mathsf{L}\oplus(x)$, for at least one of the nine $(x)$. By Lemma~\ref{Lemma:Lindenbaum}, $\rho(\mathsf{L}\oplus(x))\geq1$; but since $\mathsf{L}\oplus(x)$ contains $\mathsf{CR}\oplus(x)$, their (unique) mutual Post complete extension must be $\mathsf{CR}\oplus{X}$, for one of the nine $X$. Hence it is clear that $\mathsf{L}\subseteq\mathsf{CR}\oplus{X}$.
    \end{proof}

    The following special case of Theorem~\ref{Theorem:RegularSublogics} may have some interest to specialists. Let us call a conditional logic \textit{connexive} if it contains the following schema:\footnote{For recent discussion of the historical origins of connexivity, consult Lenzen~\cite{Lenzen2020critical}, Weiss~\cite{Weiss2022Aristotle}, and Ruge~\cite{Ruge2023Connexivity}.}
    \begin{equation*}
        \label{Axiom:Aristotle}
        \tag{Aristotle}
        \neg(\neg\varphi\boxright\varphi)
    \end{equation*}
    It is easy to see that there are no consistent normal connexive conditional logics (cf.~Weiss~\cite[p.~614]{Weiss2019ConnexiveExtensions}), and that the only Post complete regular connexive conditional logics are $\mathsf{CR}\oplus{\text{\ref{Axiom:Bot}}}$ and $\mathsf{CR}\oplus{\text{\ref{Axiom:Conjunction}}}$. These two systems comprise the sole Post complete regular extensions of the connexive conditional logics studied by Weiss~\cite{Weiss2019ConnexiveExtensions}.

    \begin{corollary}
        \label{Corollary:NormalSublogics}
        Every consistent normal conditional logic $\mathsf{L}$ is contained in at least one of the four Post complete normal conditional logics.
    \end{corollary}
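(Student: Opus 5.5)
The plan is to derive the corollary directly from Theorem~\ref{Theorem:RegularSublogics} together with the normality facts already noted in Corollary~\ref{Corollary:NormalPCLogics}. Let $\mathsf{L}$ be a consistent normal conditional logic. Since every normal logic is regular, Theorem~\ref{Theorem:RegularSublogics} gives some $X$ among the nine axioms with $\mathsf{L}\subseteq\mathsf{CR}\oplus X$. It then remains to show that $X$ is one of \ref{Axiom:Top}, \ref{Axiom:Disjunction}, \ref{Axiom:Imp}, or \ref{Axiom:Consequent}.

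For this I will use \ref{Axiom:CN}. Since $\mathsf{L}$ is normal, $\vdash_{\mathsf{L}}\varphi\boxright\top$ for every $\varphi$, and in particular $\vdash_{\mathsf{L}}\top\boxright\top$ and $\vdash_{\mathsf{L}}\bot\boxright\top$. Hence $\mathsf{CR}\oplus X$ contains both formulas. By Lemma~\ref{Lemma:RegularSetsInclusion}, $\mathsf{CR}\oplus X$ also contains the labelled set $(x)$, and by Lemma~\ref{Lemma:RegularPCLogics} it is consistent. So $(x)$ contains neither $\neg(\top\boxright\top)$ nor $\neg(\bot\boxright\top)$. Inspecting the nine sets, this excludes (\ref{RegularSet:Antecedent}), (\ref{RegularSet:CON}), (\ref{RegularSet:NegatedAntecedent}), (\ref{RegularSet:NegatedConjunction}) and (\ref{RegularSet:Bottom}). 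The only survivors are (\ref{RegularSet:Top}), (\ref{RegularSet:DIS}), (\ref{RegularSet:IMP}) and (\ref{RegularSet:Consequent}), which correspond exactly to the four normal Post complete logics of Corollary~\ref{Corollary:NormalPCLogics}.

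I expect no real obstacle, only bookkeeping. The one point to check is that the witness $X$ supplied by Theorem~\ref{Theorem:RegularSublogics} is compatible with $\mathsf{L}$'s theorems. This holds by containment, since $\mathsf{L}\subseteq\mathsf{CR}\oplus X$ and the latter is consistent.
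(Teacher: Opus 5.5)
Your proof is correct and follows the paper's argument: you apply Theorem~\ref{Theorem:RegularSublogics} to get some $\mathsf{CR}\oplus X$ containing $\mathsf{L}$, then observe that only the four normal ones survive. Your explicit check of the labelled sets against the \ref{Axiom:CN} instances $\top\boxright\top$ and $\bot\boxright\top$ just spells out the step that the paper delegates to Corollary~\ref{Corollary:NormalPCLogics}.
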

    \begin{proof}
        Since any consistent normal conditional logic \textsf{L} is also a consistent regular conditional logic, by Theorem~\ref{Theorem:RegularSublogics}, \textsf{L} must be contained in at least one of the nine Post complete regular conditional logics. By Corollary~\ref{Corollary:NormalPCLogics}, only four of these are normal, whence the result.
    \end{proof}

    %We turn now to proving an analogue for normal conditional logics of a result for regular modal logics due to Makinson~\cite{Makinson1971}. This will, in effect, also emerge as a corollary of Theorem~\ref{Theorem:RegularPCLogics} (cf.~Segerberg~\cite[pp.~712--713]{Segerberg1972PostCompleteness}).

    %{\color{blue} I've tried to unpack Segerberg's rather condensed reasoning in elaborating this analogue, it could maybe be cleaned up a bit.}

    Theorem~\ref{Theorem:RegularSublogics} and Corollary~\ref{Corollary:NormalSublogics} have important semantic corollaries (cf.~Segerberg~\cite[pp.~712--713]{Segerberg1972PostCompleteness}), which we turn to now. 

    \begin{corollary}
        \label{Corollary:RegularFrames}
        Every consistent regular conditional logic $\mathsf{L}$ has a neighborhood frame (i.e., there is a frame $\mathfrak{F}^\mathcal{N}$ such that $\mathsf{L}\subseteq\mathsf{L}(\mathfrak{F}^{\mathcal{N}})$).
    \end{corollary}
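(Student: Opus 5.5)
By Theorem~\ref{Theorem:RegularSublogics}, the consistent regular logic $\mathsf{L}$ is contained in $\mathsf{CR}\oplus X$ for one of the nine axioms $X$. So it suffices to produce, for each of the nine $X$, a neighborhood frame $\mathfrak{F}^{\mathcal{N}}_X$ with $\mathsf{CR}\oplus X\subseteq\mathsf{L}(\mathfrak{F}^{\mathcal{N}}_X)$. Then $\mathsf{L}\subseteq\mathsf{CR}\oplus X\subseteq\mathsf{L}(\mathfrak{F}^{\mathcal{N}}_X)$. Following Segerberg's modal case, I would use one-world frames $W=\{w\}$, where the only propositions are $\emptyset$ and $W$.

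On a one-world frame, every formula denotes $\emptyset$ or $W$, and $\mathcal{N}(w,Z)$ for $Z\in\{\emptyset,W\}$ is a family of subsets of $\{\emptyset,W\}$. I would choose $\mathcal{N}$ so that $[\varphi\boxright\psi]$ is the truth-function of $[\varphi],[\psi]$ prescribed by the right-hand side of $X$. Explicitly, set $Y\in\mathcal{N}(w,Z)$ iff the truth value of $RHS(X)$, with $\varphi$ evaluated as $Z=W$ and $\psi$ as $Y=W$, is true. For instance, for \ref{Axiom:Conjunction} put $\mathcal{N}(w,W)=\{W\}$ and $\mathcal{N}(w,\emptyset)=\emptyset$. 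For \ref{Axiom:Imp} put $\mathcal{N}(w,W)=\{W\}$ and $\mathcal{N}(w,\emptyset)=\{\emptyset,W\}$. For \ref{Axiom:Bot}, $\mathcal{N}$ is constantly $\emptyset$.

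The main check is that each frame is regular, i.e.\ satisfies (\ref{Condition:cm}) and (\ref{Condition:cc}). With two propositions, (\ref{Condition:cm}) amounts to upward closure (if $\emptyset\in\mathcal{N}(w,Z)$ then $W\in\mathcal{N}(w,Z)$). Condition (\ref{Condition:cc}) is then automatic, since $\emptyset\cap W=\emptyset$. So the only obstacle is to verify, for each $X$ and each $Z$, that truth of $RHS(X)$ is monotone in $\psi$.

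This monotonicity is exactly what regularity excludes: the nine right-hand sides are precisely the truth-functions of $(\varphi,\psi)$ that are monotone in $\psi$. This matches the footnoted facts $(\top\boxright\bot)\to(\top\boxright\top)$ and $(\bot\boxright\bot)\to(\bot\boxright\top)$. Checking the nine cases is routine; for example, $\neg\varphi\wedge\psi$ and $\varphi\vee\psi$ are monotone in $\psi$.

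Validity of $X$ is then immediate by construction: at $w$, $\varphi\boxright\psi$ is true iff $RHS(X)$ is true, for every valuation. Since regular neighborhood frames validate $\mathsf{CR}$ and validity is closed under \ref{Rule:RCEA} and \ref{Rule:RCEC} (by Chellas's soundness), $\mathsf{L}(\mathfrak{F}^{\mathcal{N}}_X)$ is a regular logic containing $X$. Hence it contains $\mathsf{CR}\oplus X$, which completes the argument.
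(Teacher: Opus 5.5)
Your proposal is correct and follows the paper's proof: you reduce via Theorem~\ref{Theorem:RegularSublogics} to the nine logics $\mathsf{CR}\oplus X$ and exhibit one-world neighborhood frames. Your uniform recipe (put $Y\in\mathcal{N}(w,Z)$ iff $RHS(X)$ comes out true) produces exactly the nine frames the paper lists, and your monotonicity-in-$\psi$ argument supplies the regularity verification that the paper leaves to the reader (you mean that regularity excludes \emph{non}-monotone right-hand sides).
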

    \begin{proof}
        First, we show that each of the nine Post complete regular conditional logics \textsf{L} has a neighborhood frame. We present the frames and leave the routine verification of soundness to the reader.
        
        \begin{itemize}
        
            \item The frame for $\mathsf{CR} \oplus \textsf{\ref{Axiom:Top}}$ is  $\mathfrak{F}^{\mathcal{N}}_{\textsf{\ref{Axiom:Top}}}=\langle \{a\}, \mathcal{N} \rangle$, where for $X\subseteq\{a\}$: $$\mathcal{N}(a, X)=\{\{a\}, \emptyset\}$$

            \item The frame for $\mathsf{CR} \oplus \textsf{\ref{Axiom:Disjunction}}$ is $\mathfrak{F}^{\mathcal{N}}_{\textsf{\ref{Axiom:Disjunction}}}=\langle \{a\}, \mathcal{N} \rangle$, where for $X\subseteq\{a\}$: $$\mathcal{N}(a, X)=\begin{cases}\{\{a\}\} & \hfill \text{ if }a \notin X\\ \{\{a\}, \emptyset\} & \hfill \text{else}\end{cases}$$ 

            \item The frame for $\mathsf{CR} \oplus \textsf{\ref{Axiom:Antecedent}}$ is  $\mathfrak{F}^{\mathcal{N}}_{\textsf{\ref{Axiom:Antecedent}}}=\langle \{a\}, \mathcal{N} \rangle$, where for $X\subseteq\{a\}$: $$\mathcal{N}(a, X)=\begin{cases}\emptyset & \hfill \text{ if }a \notin X\\ \{\{a\}, \emptyset\} & \hfill \text{else}\end{cases}$$ 

            \item The frame for $\mathsf{CR} \oplus \textsf{\ref{Axiom:Imp}}$ is $\mathfrak{F}^{\mathcal{N}}_{\textsf{\ref{Axiom:Imp}}}=\langle \{a\}, \mathcal{N}\rangle$, where for $X\subseteq\{a\}$: $$\mathcal{N}(a, X)=\begin{cases}\{\{a\}\} & \hfill \text{ if }a \in X\\ \{\{a\},\emptyset\} & \hfill \text{else}\end{cases}$$ 

            \item The frame for $\mathsf{CR} \oplus \textsf{\ref{Axiom:Consequent}}$ is  $\mathfrak{F}^{\mathcal{N}}_{\textsf{\ref{Axiom:Consequent}}}=\langle \{a\}, \mathcal{N} \rangle$, where for $X\subseteq\{a\}$: $$\mathcal{N}(a, X)=\{\{a\}\} $$ 

            \item The frame for $\mathsf{CR} \oplus \textsf{\ref{Axiom:Conjunction}}$ is  $\mathfrak{F}^{\mathcal{N}}_{\textsf{\ref{Axiom:Conjunction}}}=\langle \{a\}, \mathcal{N} \rangle$, where for $X\subseteq\{a\}$: $$\mathcal{N}(a, X)=\begin{cases}\{\{a\}\} & \hfill \text{ if }a \in X\\ \emptyset & \hfill \text{else} \end{cases}$$ 

            \item The frame for $\mathsf{CR} \oplus \textsf{\ref{Axiom:NegatedAntecedent}}$ is  $\mathfrak{F}^{\mathcal{N}}_{\textsf{\ref{Axiom:NegatedAntecedent}}}=\langle \{a\}, \mathcal{N} \rangle$, where for $X\subseteq\{a\}$: $$\mathcal{N}(a, X)=\begin{cases}\emptyset & \hfill \text{ if }a \in X\\ \{\{a\}, \emptyset\} & \hfill \text{else} \end{cases}$$ 

            \item The frame for $\mathsf{CR} \oplus \textsf{\ref{Axiom:NegatedConjunction}}$ is  $\mathfrak{F}^{\mathcal{N}}_{\textsf{\ref{Axiom:NegatedConjunction}}}=\langle \{a\}, \mathcal{N} \rangle$, where for $X\subseteq\{a\}$: $$\mathcal{N}(a, X)=\begin{cases}\{\{a\}\} & \hfill \text{ if }a \notin X\\ \emptyset & \hfill \text{else} \end{cases}$$ 

            \item The frame for $\mathsf{CR} \oplus \textsf{\ref{Axiom:Bot}}$ is  $\mathfrak{F}^{\mathcal{N}}_{\textsf{\ref{Axiom:Bot}}}=\langle \{a\}, \mathcal{N} \rangle$, where for $X\subseteq\{a\}$: $$\mathcal{N}(a, X) = \emptyset$$ 
            
        \end{itemize}

    \noindent
        That every consistent regular conditional logic \textsf{L} has a neighborhood frame now follows from the foregoing and Theorem~\ref{Theorem:RegularSublogics}.
    \end{proof}

    \begin{corollary}
        \label{Corollary:NormalFrames}
        Every consistent normal conditional logic $\mathsf{L}$ has a proposition selection frame (i.e., there is a frame $\mathfrak{F}^\pi$ such that $\mathsf{L}\subseteq\mathsf{L}(\mathfrak{F}^\pi)$).
    \end{corollary}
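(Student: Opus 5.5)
The plan is to mirror the proof of Corollary~\ref{Corollary:RegularFrames}. First we exhibit, for each of the four Post complete normal conditional logics of Corollary~\ref{Corollary:NormalPCLogics}, a one-world proposition selection frame validating it. Then we invoke Corollary~\ref{Corollary:NormalSublogics}: any consistent normal $\mathsf{L}$ is contained in one of these four logics, and hence in the logic of the corresponding frame.

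Concretely, take $W=\{a\}$ and define $\pi(a,X)$ for $X\in\{\emptyset,\{a\}\}$. The idea is that $\varphi\boxright\psi$ is true at $a$ iff $\pi(a,[\varphi])\subseteq[\psi]$, so we choose $\pi$ to reproduce the truth table of the right-hand side of each axiom.
\begin{itemize}
\item For \ref{Axiom:Top}, set $\pi(a,X)=\emptyset$ for all $X$, so every conditional is true.
\item For \ref{Axiom:Consequent}, set $\pi(a,X)=\{a\}$ for all $X$, so $\varphi\boxright\psi$ holds iff $a\in[\psi]$.
\item For \ref{Axiom:Imp}, set $\pi(a,X)=X$. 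Then $\varphi\boxright\psi$ holds iff $a\notin[\varphi]$ or $a\in[\psi]$.
\item For \ref{Axiom:Disjunction}, set $\pi(a,X)=W\setminus X$. Then $\varphi\boxright\psi$ holds iff $a\in[\varphi]$ or $a\in[\psi]$.
\end{itemize}

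Next we verify that each frame validates $\mathsf{CK}\oplus X$. Chellas's soundness result gives that $\mathsf{CK}$ is valid on every proposition selection frame. Validity in a frame is closed under modus ponens and uniform substitution. It is also closed under \ref{Rule:RCEA} and \ref{Rule:RCEC}, because the truth condition for $\boxright$ depends only on the propositions $[\varphi]$ and $[\psi]$, and a biconditional valid on the frame forces equal truth sets. So $\mathsf{L}(\mathfrak{F}^\pi)$ is a normal conditional logic. It then remains to check that the schema $X$ is valid, which is a two-by-two truth-table check at the single world $a$ as computed above. Hence $\mathsf{CR}\oplus X\subseteq\mathsf{L}(\mathfrak{F}^\pi_X)$ for each of the four $X$.

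Finally, given a consistent normal $\mathsf{L}$, Corollary~\ref{Corollary:NormalSublogics} yields $\mathsf{L}\subseteq\mathsf{CR}\oplus X$ for one of the four $X$, and so $\mathsf{L}\subseteq\mathsf{L}(\mathfrak{F}^\pi_X)$. The only delicate point is the closure argument showing that the logic of a proposition selection frame is normal, which is what justifies the inclusion $\mathsf{CR}\oplus X\subseteq\mathsf{L}(\mathfrak{F}^\pi_X)$. I would settle it by the standard induction on derivations in $\mathsf{CK}\oplus X$, which is routine.
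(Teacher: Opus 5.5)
Your proposal is correct and follows the paper's proof: the paper uses the same four one-world proposition selection frames and then appeals to Corollary~\ref{Corollary:NormalSublogics}. Your $\pi(a,X)=X$ and $\pi(a,X)=W\setminus X$ are exactly the paper's case-by-case definitions for \ref{Axiom:Imp} and \ref{Axiom:Disjunction}, and your explicit soundness check (closure of a frame's logic under the rules) is the routine verification the paper leaves to the reader.
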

    \begin{proof}

        The strategy of the proof is the same as that of Corollary~\ref{Corollary:RegularFrames}. The relevant frames are as follows:

        \begin{itemize}
        
            \item The frame for $\mathsf{CK} \oplus \textsf{\ref{Axiom:Top}}$ is  $\mathfrak{F}^{\pi}_{\text{\ref{Axiom:Top}}}=\langle{\{a\},\pi}\rangle$, where for $X\subseteq\{a\}$: $$\pi(a, X)=\emptyset$$

            \item The frame for $\mathsf{CK} \oplus \textsf{\ref{Axiom:Disjunction}}$ is $\mathfrak{F}^{\pi}_{\text{\ref{Axiom:Disjunction}}}=\langle{\{a\},\pi}\rangle$, where for $X\subseteq\{a\}$: $$\pi(a, X)=\begin{cases}\emptyset & \hfill \text{ if }a \in X\\ \{a\} & \hfill \text{else}\end{cases}$$ 

            \item The frame for $\mathsf{CK} \oplus \textsf{\ref{Axiom:Imp}}$ is $\mathfrak{F}^{\pi}_{\text{\ref{Axiom:Imp}}}=\langle{\{a\},\pi}\rangle$, where for $X\subseteq\{a\}$: $$\pi(a, X)=\begin{cases}\{a\} & \hfill \text{ if }a \in X\\ \emptyset & \hfill \text{else}\end{cases}$$ 

            \item The frame for $\mathsf{CK} \oplus \textsf{\ref{Axiom:Consequent}}$ is  $\mathfrak{F}^{\pi}_{\text{\ref{Axiom:Consequent}}}=\langle{\{a\},\pi}\rangle$, where for $X\subseteq\{a\}$: $$\pi(a, X)=\{a\} $$ 
            
        \end{itemize}

        \noindent
        That every consistent normal conditional logic \textsf{L} has a proposition selection frame follows from the foregoing and Corollary~\ref{Corollary:NormalSublogics}.
    \end{proof}
    
    Since normal conditional logics are essentially propositionally indexed multimodal logics, Corollary~\ref{Corollary:NormalFrames} may initially seem somewhat surprising. After all, Thomason~\cite[\S4]{Thomason1972} gave an example of a consistent bimodal tense logic having no Kripke frames at all. In this respect---and, as we will observe, others---it turns out that multimodal logic is more akin to \textit{seminormal} conditional logic than normal conditional logic. In Appendix~\ref{Section:Connections with multimodal logic}, we show that not every consistent seminormal conditional logic has a formula selection frame using a construction directly modeled on Thomason's.

    \subsection{Further results on Post completeness}
    \label{Subsection:Further Post complete extensions}

    Segerberg~\cite[pp.~713ff.]{Segerberg1972PostCompleteness} shows that several modal logics, for example \textsf{K}, have $2^{\aleph_{0}}$ many Post complete extensions. It is clear that \textit{essentially all} of these Post complete extensions are not particularly well-behaved logics. For example, \textsf{K} has only two normal Post complete extensions, so essentially all (i.e., all but a small finite number) of its Post complete extensions are non-normal. The situation in conditional logic is somewhat different, as we now demonstrate. In particular, we show that the seminormal conditional logic \textsf{Ck} has $2^{\aleph_{0}}$ many \textit{seminormal} Post complete extensions. 
    
    %{\color{red} the trick is to look at a class of frames having the minimal property that we need, and not a single frame. The former construction wasn't working because the resulting logic of the frame would be inconsistent, if closed under uniform substituion. In fact, $p \boxright \bot$ was a theorem, and, by uniform susbtitution $X_k \boxright \bot$ would be a theorem too for some $X_k \boxright \bot \in \Gamma\setminus \Delta$. However, $\neg(X_k \boxright \bot)$ was a theorem too. Making the resulting logic inconsistent.}

    %{\color{blue} Good save, Giuliano. I think this works. I'll polish it up later today. I think we should maybe add some material explicitly defining substitutions, US, etc., to the section on logics.}

    \begin{definition}
        \label{Definition:CkRecursiveFormulae}
        For $i, k\geq 1$, we define:
            \begin{equation*}
                X_i = \bigwedge_{k\leq{i}}\top
            \end{equation*}
            \begin{equation*}
                %Y_i:=(\bigwedge_{k<{i}}\neg{(X_k \boxright\bot)})\wedge(X_i\boxright\bot)
                \Gamma=\{X_j \boxright \bot : j \in \mathbb{N}^{+}\}
            \end{equation*}
    \end{definition}

    \begin{definition}
        \label{Definition:CkRecursiveFrames}
            For $\Delta \subseteq \Gamma$, let $\mathcal{C}^\Delta$ be the class of all formula selection frames $\mathfrak{F}=\langle \{a\}, f\rangle$ satisfying the following two constraints:
            \begin{itemize}
                \item For all $k$ such that $X_k \boxright \bot \in \Gamma\setminus\Delta, f(a, X_k)=\{a\}$;
                \item For all $k$ such that $X_k \boxright \bot \in \Delta, f(a, X_k)=\emptyset$.
            \end{itemize}
    \end{definition}

    \begin{lemma}
        \label{Lemma:CDeltaLogics}
        For any $\Delta\subseteq\Gamma$, $\mathsf{L}(\mathcal{C}^{\Delta})$ is a consistent seminormal conditional logic.
    \end{lemma}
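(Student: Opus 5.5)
The plan is to verify each closure condition and axiom directly against the one-world frames in $\mathcal{C}^{\Delta}$. The class is nonempty and consistency is immediate. The formulas $X_k$ are pairwise distinct (they have different lengths), so the two constraints in Definition~\ref{Definition:CkRecursiveFrames} never conflict. Setting $f(a,X_k)$ as required and $f(a,\varphi)$ arbitrarily (say $\emptyset$) for all other $\varphi$ therefore yields a frame in $\mathcal{C}^{\Delta}$. Since $\bot$ is false at $a$ in every model, $\bot\notin\mathsf{L}(\mathcal{C}^{\Delta})$.

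Next come the routine closure properties, checked frame by frame and hence holding for the intersection. Tautologies are valid and modus ponens preserves validity because the Boolean clauses are classical. \ref{Axiom:CM} and \ref{Axiom:CC} are valid because $f(a,\varphi)\subseteq[\psi\wedge\theta]$ iff $f(a,\varphi)\subseteq[\psi]$ and $f(a,\varphi)\subseteq[\theta]$. \ref{Axiom:CN} is valid since $f(a,\varphi)\subseteq W=[\top]$. For \ref{Rule:RCEC}, suppose $\varphi\leftrightarrow\psi\in\mathsf{L}(\mathcal{C}^{\Delta})$. Then $[\varphi]^{\mathfrak{M}}=[\psi]^{\mathfrak{M}}$ in every model $\mathfrak{M}$ over a frame of the class. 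Since the consequent enters the truth clause $\boxright'$ only through its extension, $[\theta\boxright\varphi]^{\mathfrak{M}}=[\theta\boxright\psi]^{\mathfrak{M}}$, so the biconditional is valid. Note that I do not need \ref{Rule:RCEA}, which indeed may fail here.

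The main obstacle is closure under uniform substitution, which can fail for single formula selection frames (see the footnote after the semantics). I would argue contrapositively. Suppose $\varphi^\sigma$ fails in a model $\mathfrak{M}=\langle\{a\},f,V\rangle$ with $\langle\{a\},f\rangle\in\mathcal{C}^{\Delta}$. Define $\mathfrak{M}'=\langle\{a\},f',V'\rangle$ by
\begin{equation*}
V'(p)=[p^\sigma]^{\mathfrak{M}}, \qquad f'(a,\chi)=f(a,\chi^\sigma).
\end{equation*}
A straightforward induction on $\chi$ shows $[\chi]^{\mathfrak{M}'}=[\chi^\sigma]^{\mathfrak{M}}$. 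The $\boxright$ case uses $(\chi_1\boxright\chi_2)^\sigma=\chi_1^\sigma\boxright\chi_2^\sigma$ together with the definition of $f'$ and the induction hypothesis on $\chi_2$. Hence $\varphi$ fails in $\mathfrak{M}'$.

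The crucial point is that $\langle\{a\},f'\rangle$ still lies in $\mathcal{C}^{\Delta}$. Each $X_k$ is variable-free, so $X_k^\sigma=X_k$ and thus $f'(a,X_k)=f(a,X_k)$, and both constraints are preserved. So $\varphi\notin\mathsf{L}(\mathcal{C}^{\Delta})$, which establishes closure under substitution. This is exactly why the construction uses a class of frames constrained only on closed formulas rather than a single frame.
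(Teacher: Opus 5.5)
Your proposal is correct and follows essentially the same route as the paper. The paper likewise treats everything except uniform substitution as routine and handles substitution with the same construction, $f^\sigma(a,\psi)=f(a,\psi^\sigma)$ and $V^\sigma(p)=[p^\sigma]^{\mathfrak{M}}$; the variable-freeness of the $X_k$ keeps the new frame in $\mathcal{C}^{\Delta}$, and consistency follows from $\mathcal{C}^{\Delta}$ being nonempty.
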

    \begin{proof}
        That $\mathsf{L}(\mathcal{C}^{\Delta})$ is a seminormal conditional logic is obvious \textit{except} for closure under uniform substitution, which we now verify. 
        
        We wish to show that, for any substitution $^\sigma$, if $\varphi\in\mathsf{L}(\mathcal{C}^{\Delta})$, then $\varphi^\sigma\in\mathsf{L}(\mathcal{C}^{\Delta})$. So suppose that $\varphi^\sigma\not\in\mathsf{L}(\mathcal{C}^{\Delta})$. Then for some model $\mathfrak{M}=\langle \mathfrak{F}, V \rangle$ over a frame $\mathfrak{F}=\langle \{a\},f\rangle\in\mathcal{C}^{\Delta}$, it must be that $\not\models^{\mathfrak{M}}_{a}\varphi^\sigma$. Consider the frame $\mathfrak{F}^\sigma=\langle \{a\},f^\sigma\rangle$ defined by putting:
        \[f^\sigma(a,\psi) = f(a,\psi^\sigma)\]
        Every formula $X_k$ is variable-free, so $f^\sigma(a,X_k) = f(a,{X_k}^\sigma) = f(a,X_k)$. Since $\mathfrak{F}\in\mathcal{C}^{\Delta}$, it follows from this and preceding that $\mathfrak{F}^\sigma\in\mathcal{C}^{\Delta}$ too. Now define the model $\mathfrak{M}^\sigma=\langle\mathfrak{F}^\sigma,V^\sigma\rangle$ by putting:
        \[V^\sigma(p) = [p^\sigma]^{\mathfrak{M}}\]
        We now wish to show by induction on the complexity of $\psi$ that:
        \[\models^{\mathfrak{M}}_{a}\psi^\sigma \text{ if and only if }\models^{\mathfrak{M}^\sigma}_{a}\psi\]
        We briefly examine the basis cases and the case concerning $\boxright$. Where $\psi$ is a variable $p$, $\models^{\mathfrak{M}}_{a}p^\sigma$ if and only if $a\in [p^\sigma]^{\mathfrak{M}}$ if and only if $a \in V^\sigma(p)$ if and only if $\models^{\mathfrak{M}^\sigma}_{a}p$. Where $\psi$ is $\bot$, both $\not\models^{\mathfrak{M}}_{a}\bot^\sigma$ (since $\bot^\sigma = \bot$) and $\not\models^{\mathfrak{M}^\sigma}_{a}\bot$, which suffices. Finally, $\models^{\mathfrak{M}}_{a}(\alpha\boxright\beta)^\sigma$ if and only if $\models^{\mathfrak{M}}_{a}\alpha^\sigma\boxright\beta^\sigma$ if and only if $f(a,\alpha^\sigma)\subseteq[\beta^\sigma]^\mathfrak{M}$ if and only if $f^\sigma(a,\alpha)\subseteq[\beta]^{\mathfrak{M}^\sigma}$ if and only if $\models^{\mathfrak{M}^\sigma}_{a}\alpha\boxright\beta$. The foregoing induction and the fact that $\not\models^{\mathfrak{M}}_{a}\varphi^\sigma$ imply that $\not\models^{\mathfrak{M}^\sigma}_{a}\varphi$, whence $\varphi\not\in\mathsf{L}(\mathcal{C}^{\Delta})$, which was to be proved.
        
        Finally, it is clear that for any $\Delta\subseteq\Gamma$, $\mathcal{C}^{\Delta}$ is nonempty, from which the consistency of $\mathsf{L}(\mathcal{C}^{\Delta})$ easily follows. 
    \end{proof}

    \begin{lemma}
        \label{Lemma:CkSeminormalExtensions}
        For any $\Delta\subseteq\Gamma$:
        \begin{itemize}
            \item[i.] $\mathsf{L}(\mathcal{C}^{\Delta})$ contains $\Delta$;
            \item[ii.] $\mathsf{L}(\mathcal{C}^{\Delta})$ contains $\{\neg(X_k \boxright \bot): X_k \boxright \bot\in\Gamma\setminus\Delta\}$;
            \item[iii.] $\mathsf{L}(\mathcal{C}^{\Delta})$ contains $(\varphi\leftrightarrow\psi)\to((\theta\boxright\varphi)\leftrightarrow(\theta\boxright\psi))$.
        \end{itemize} 
    \end{lemma}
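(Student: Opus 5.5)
All three items are checked by direct semantic verification on an arbitrary frame $\mathfrak{F}=\langle\{a\},f\rangle\in\mathcal{C}^{\Delta}$ and an arbitrary model $\mathfrak{M}$ over it. Since $\mathsf{L}(\mathcal{C}^{\Delta})$ is the intersection of the $\mathsf{L}(\mathfrak{F})$, it suffices to show that each listed formula is true at $a$ in every such model. The preliminary observation is that, for every $k$, $[X_k]^{\mathfrak{M}}=\{a\}$ and $[\bot]^{\mathfrak{M}}=\emptyset$. This holds because $X_k$ is a conjunction of copies of $\top=\neg\bot$, and the truth clauses for $\bot$, $\neg$, $\vee$ (hence $\wedge$) make it true at $a$ regardless of $V$.

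For (i), take $X_k\boxright\bot\in\Delta$. By the second constraint of Definition~\ref{Definition:CkRecursiveFrames}, $f(a,X_k)=\emptyset\subseteq[\bot]^{\mathfrak{M}}$, so clause $\boxright'$ gives $\models^{\mathfrak{M}}_a X_k\boxright\bot$.

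For (ii), take $X_k\boxright\bot\in\Gamma\setminus\Delta$. By the first constraint, $f(a,X_k)=\{a\}\not\subseteq\emptyset=[\bot]^{\mathfrak{M}}$, so $X_k\boxright\bot$ fails at $a$, and $\neg(X_k\boxright\bot)$ holds there.

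The one point needing care is that the two constraints really pin down $f$ at each $X_k$ unambiguously. This requires that the formulas $X_k$ be pairwise distinct syntactic objects, so that the two constraints never conflict on the same formula. That is clear, since $X_i$ has $i$ conjuncts; this distinctness is exactly what the formula (rather than proposition) selection semantics exploits.

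For (iii), suppose $\models^{\mathfrak{M}}_a\varphi\leftrightarrow\psi$. Because $W=\{a\}$ is a singleton, truth at $a$ is truth everywhere, so $[\varphi]^{\mathfrak{M}}=[\psi]^{\mathfrak{M}}$. Then $f(a,\theta)\subseteq[\varphi]^{\mathfrak{M}}$ iff $f(a,\theta)\subseteq[\psi]^{\mathfrak{M}}$, which gives $(\theta\boxright\varphi)\leftrightarrow(\theta\boxright\psi)$ at $a$. If the antecedent fails at $a$, the conditional holds trivially.

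None of the three steps presents a real obstacle; the only substantive observation is the reliance on the one-world frames for (iii), where local equivalence collapses to global equivalence.
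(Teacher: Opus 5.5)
Your proposal is correct and matches the paper's proof essentially step for step. Items (i) and (ii) are read off directly from the two constraints defining $\mathcal{C}^{\Delta}$ together with $[\bot]^{\mathfrak{M}}=\emptyset$, and (iii) uses the fact that in a one-world frame, truth of $\varphi\leftrightarrow\psi$ at $a$ yields $[\varphi]^{\mathfrak{M}}=[\psi]^{\mathfrak{M}}$. Your side remarks about $[X_k]^{\mathfrak{M}}=\{a\}$ and the syntactic distinctness of the $X_k$ are harmless but not needed here: $f$ is indexed by formulas, so the constraints apply directly, and distinctness matters only for the nonemptiness of $\mathcal{C}^{\Delta}$ used in Lemma~\ref{Lemma:CDeltaLogics}.
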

    \begin{proof}

        Ad (i): If $X_k\boxright\bot\in\Delta$, then for any model $\mathfrak{M}$ over any frame $\mathfrak{F}\in\mathcal{C}^\Delta$, $f(a,X_k)=\emptyset\subseteq[\bot]^{\mathfrak{M}}$, and so $\models^{\mathfrak{M}}_{a}X_k \boxright \bot$. Therefore, $X_k\boxright\bot\in\mathsf{L}(\mathcal{C}^{\Delta})$.

        Ad (ii): If $X_k\boxright\bot\in\Gamma\setminus\Delta$, then for any model $\mathfrak{M}$ over any frame $\mathfrak{F}\in\mathcal{C}^\Delta$, $f(a,X_k)=\{a\}\not\subseteq[\bot]^{\mathfrak{M}}$, so $\models^{\mathfrak{M}}_{a}\neg(X_k \boxright \bot)$. Therefore, $\neg(X_k\boxright\bot)\in\mathsf{L}(\mathcal{C}^{\Delta})$.
        
        Ad (iii): Consider any model $\mathfrak{M}$ over any frame $\mathfrak{F}\in\mathcal{C}^\Delta$ such that $\models^{\mathfrak{M}}_{a}\varphi\leftrightarrow\psi$; then $[\varphi]^{\mathfrak{M}}=[\psi]^{\mathfrak{M}}$. Either $f(a, \theta)=\{a\}$ and $\{a\}\subseteq[\varphi]^{\mathfrak{M}}$ iff $\{a\}\subseteq[\psi]^{\mathfrak{M}}$, or $f(a, \theta)=\emptyset$ and $\emptyset\subseteq[\varphi]^{\mathfrak{M}}$ iff $\emptyset\subseteq[\psi]^{\mathfrak{M}}$. In either case, $\models^{\mathfrak{M}}_{a}(\theta\boxright\varphi)\leftrightarrow(\theta\boxright\psi)$, as desired.
    \end{proof}

    \begin{theorem}
        \label{Theorem:CkSeminormalExtensions}
        $\mathsf{Ck}$ has $2^{\aleph_0}$ many \textit{seminormal} Post complete extensions.
    \end{theorem}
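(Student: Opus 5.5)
For each of the $2^{\aleph_0}$ subsets $\Delta\subseteq\Gamma$ we build a seminormal Post complete extension $\mathsf{P}_\Delta$ of $\mathsf{Ck}$, and we check that distinct subsets give distinct logics. The upper bound $2^{\aleph_0}$ is Lemma~\ref{Lemma:MaxPostCompleteExtensions}, so only the lower bound needs work.

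Fix $\Delta$. By Lemma~\ref{Lemma:CDeltaLogics}, $\mathsf{L}(\mathcal{C}^{\Delta})$ is a consistent seminormal conditional logic, and so it contains $\mathsf{Ck}$. By the strengthened form of Lemma~\ref{Lemma:Lindenbaum} (the footnote: consistent logics closed under \ref{Rule:RCEC} have Post complete extensions closed under the same rules), it has a Post complete extension $\mathsf{P}_\Delta$ closed under \ref{Rule:RCEC}. Since $\mathsf{P}_\Delta$ also contains \ref{Axiom:CM}, \ref{Axiom:CC} and \ref{Axiom:CN}, it is seminormal.

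I also want a direct check that avoids relying on the footnote. Let $\mathsf{P}_\Delta$ be any Post complete extension of $\mathsf{L}(\mathcal{C}^{\Delta})$, which exists by Lemma~\ref{Lemma:Lindenbaum}. Lemma~\ref{Lemma:CkSeminormalExtensions}(iii) puts the schema $(\varphi\leftrightarrow\psi)\to((\theta\boxright\varphi)\leftrightarrow(\theta\boxright\psi))$ into $\mathsf{P}_\Delta$. So if $\vdash_{\mathsf{P}_\Delta}\varphi\leftrightarrow\psi$, modus ponens gives $\vdash_{\mathsf{P}_\Delta}(\theta\boxright\varphi)\leftrightarrow(\theta\boxright\psi)$. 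Hence \emph{every} extension of $\mathsf{L}(\mathcal{C}^{\Delta})$ is closed under \ref{Rule:RCEC}, and every Post complete extension is seminormal with no further argument. I will rely on this direct route.

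For distinctness, let $\Delta\neq\Delta'$, say $X_k\boxright\bot\in\Delta\setminus\Delta'$. By Lemma~\ref{Lemma:CkSeminormalExtensions}(i), $X_k\boxright\bot\in\mathsf{L}(\mathcal{C}^{\Delta})\subseteq\mathsf{P}_\Delta$. By Lemma~\ref{Lemma:CkSeminormalExtensions}(ii), $\neg(X_k\boxright\bot)\in\mathsf{L}(\mathcal{C}^{\Delta'})\subseteq\mathsf{P}_{\Delta'}$. If $\mathsf{P}_\Delta=\mathsf{P}_{\Delta'}$, this common logic would contain both $X_k\boxright\bot$ and $\neg(X_k\boxright\bot)$ and so be inconsistent, which contradicts Post completeness. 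Therefore $\Delta\mapsto\mathsf{P}_\Delta$ is injective. Because $\Gamma$ is countably infinite (the $X_j$ are pairwise distinct formulas), it has $2^{\aleph_0}$ subsets, and this gives the lower bound.

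The only delicate point is that seminormality, namely closure under \ref{Rule:RCEC}, is preserved in passing to a Post complete extension. Post completeness is taken in $\Ext$, not in $\nExt$, and an arbitrary extension could in principle lose closure under a rule. Lemma~\ref{Lemma:CkSeminormalExtensions}(iii) removes this concern because it turns the rule into an axiom schema, which any extension inherits. The remaining ingredient, closure of $\mathsf{L}(\mathcal{C}^{\Delta})$ under uniform substitution, is already provided by Lemma~\ref{Lemma:CDeltaLogics}.
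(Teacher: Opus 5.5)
Your proposal is correct and is essentially the paper's own proof. It applies Lindenbaum to each $\mathsf{L}(\mathcal{C}^{\Delta})$, uses Lemma~\ref{Lemma:CkSeminormalExtensions}(iii) to turn \ref{Rule:RCEC} into an axiom schema inherited by every extension, uses parts (i)--(ii) to make the logics for distinct $\Delta$ jointly inconsistent, and takes the upper bound from Lemma~\ref{Lemma:MaxPostCompleteExtensions}. You were right to rest the seminormality step on (iii) rather than on the footnote, because a consistent seminormal logic need not have any seminormal Post complete extension: one can check that $\mathsf{Ct}$ from the appendix has none, since \ref{Axiom:Seriality} and \ref{Axiom:CN} force $\neg(\top\boxright\bot)$, and then either decision on $\bot\boxright\bot$ refutes a variable-free instance of \ref{Axiom:T1} or \ref{Axiom:LOB}.
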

    \begin{proof}
        By Lemma~\ref{Lemma:CDeltaLogics}, for every $\Delta\subseteq\Gamma$, $\mathsf{L}(\mathcal{C}^\Delta)$ is a consistent seminormal extension of \textsf{Ck} having at least one Post complete extension (Lemma~\ref{Lemma:Lindenbaum}). Lemma~\ref{Lemma:CkSeminormalExtensions}(iii) guarantees that every extension of any system $\mathsf{L}(\mathcal{C}^\Delta)$ is closed under \ref{Rule:RCEC}; consequently, every Post complete extension of any $\mathsf{L}(\mathcal{C}^\Delta)$ must be seminormal. It remains to show that for $\Delta, \Delta'\subseteq\Gamma$, if $\Delta\neq\Delta'$, then $\mathsf{L}(\mathcal{C}^\Delta)$ and $\mathsf{L}(\mathcal{C}^{\Delta'})$ have no consistent mutual extensions. Suppose (wlog) that some $X_k\boxright\bot\in\Delta$ but $X_k\boxright\bot\not\in\Delta'$; then by Lemma~\ref{Lemma:CkSeminormalExtensions}(i), $X_k\boxright\bot\in\mathsf{L}(\mathcal{C}^\Delta)$, but by Lemma~\ref{Lemma:CkSeminormalExtensions}(ii), $\neg(X_k\boxright\bot)\in\mathsf{L}(\mathcal{C}^{\Delta'})$, which suffices. From the foregoing, \textsf{Ck} must have at least $2^{|\Gamma|}=2^{\aleph_0}$ many seminormal Post complete extensions, and by Lemma~\ref{Lemma:MaxPostCompleteExtensions}, it must have exactly that many seminormal Post complete extensions.
    \end{proof}

    It is of some interest to note that the construction just developed can be adapted (in fact, in a simplified form) to show that the basic normal $\omega$-multimodal logic $\mathsf{K}^\omega$ has $2^{\aleph_0}$ many normal Post complete extensions. The proof is given in Appendix~\ref{Section:Connections with multimodal logic}.

    We turn now to showing that there are uncountably many Post complete extensions of the basic normal conditional logic \textsf{CK}, that is, $\rho(\textsf{CK})=2^{\aleph_{0}}$.

    \begin{definition}\label{def:ind}
    A set $\Gamma \subseteq Fm$ is \textit{$\mathsf{L}$-independent} if and only if for all $\varphi_1, \dots, \varphi_m, \psi_1, \dots, \psi_n \in \Gamma$, if $\vdash_{\mathsf{L}}(\varphi_1 \wedge \dots \wedge \varphi_m) \to (\psi_1 \vee \dots \vee \psi_n)$, then $\{\varphi_1, \dots, \varphi_m\}\cap \{\psi_1,\dots, \psi_n\}\neq \emptyset$.
    \end{definition}

\noindent
    Essentially, a set of formulas $\Gamma$ is $\mathsf{L}$-independent when there are no two disjoint (non-empty) subsets of formulas $\{\varphi_1,\dots, \varphi_m\}$ and $\{\psi_1, \dots, \psi_n\}$ such that $\vdash_\mathsf{L} (\varphi_1 \wedge \dots \wedge \varphi_m) \to (\psi_1 \vee \dots \vee \psi_n)$. 
    
    The key result we appeal to regarding \textsf{L}-independent sets is as follows (we omit the proof; consult Segerberg~\cite[p.~713]{Segerberg1972PostCompleteness}):

  \begin{lemma}\label{lem:segerbergunc}
    For any consistent conditional logic $\mathsf{L}$, if there is a countably infinite $\mathsf{L}$-independent set $\Gamma \subseteq Fm^\bot$, then $\rho(\mathsf{L})\geq 2^{\aleph_0}$.
  \end{lemma}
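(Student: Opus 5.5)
The plan is the classical Lindenbaum--Tarski argument indexed by subsets of the independent set. Enumerate the countably infinite $\mathsf{L}$-independent set as $\Gamma=\{\gamma_1,\gamma_2,\ldots\}$ (with the $\gamma_i$ pairwise distinct). For each $S\subseteq\mathbb{N}^+$, consider
\[\Gamma_S=\{\gamma_i : i\in S\}\cup\{\neg\gamma_i : i\notin S\}.\]
I will show that each $\Gamma_S$ yields a consistent extension of $\mathsf{L}$, and that distinct $S$ yield incompatible extensions. Since there are $2^{\aleph_0}$ subsets $S$, this gives the bound.

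\textbf{Step 1: the sets $\Gamma_S$ are jointly satisfiable in a Post complete extension.} A formula of $Fm^\bot$ can contain no variables: in $Fm^\bot$ the only atoms are $\bot$ and variables, and I read the hypothesis as concerning the variable-free fragment, as in Segerberg and Lemma~\ref{Lemma:SegerbergA}. For such formulas, uniform substitution is idle. Hence the smallest logic $\mathsf{L}+\Gamma_S$ containing $\mathsf{L}$ and $\Gamma_S$ equals the closure of $\mathsf{L}\cup\Gamma_S$ under modus ponens together with substitution instances of $\mathsf{L}$ only. Concretely, $\vdash_{\mathsf{L}+\Gamma_S}\bot$ iff $\Gamma_S\vdash_{\mathsf{L}}\bot$ in the sense of derivability defined in the paper. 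This is a deduction-theorem-style fact: the formulas of $\Gamma_S$ are closed, so they can be moved to antecedents, and applying a substitution to a theorem $\bigwedge\delta_j\to\chi$ of $\mathsf{L}$ leaves the $\delta_j$ fixed. I would verify it by checking that $\{\chi : \Gamma_S\vdash_{\mathsf{L}}\chi\}$ is a conditional logic containing $\mathsf{L}$ and $\Gamma_S$.

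\textbf{Step 2: consistency via independence.} Suppose $\Gamma_S\vdash_{\mathsf{L}}\bot$. Then for some $\gamma_{i_1},\ldots,\gamma_{i_m}$ with $i_j\in S$ and some $\gamma_{k_1},\ldots,\gamma_{k_n}$ with $k_l\notin S$, we have $\vdash_{\mathsf{L}}(\gamma_{i_1}\wedge\cdots\wedge\gamma_{i_m}\wedge\neg\gamma_{k_1}\wedge\cdots\wedge\neg\gamma_{k_n})\to\bot$. Propositionally this becomes $\vdash_{\mathsf{L}}(\gamma_{i_1}\wedge\cdots\wedge\gamma_{i_m})\to(\gamma_{k_1}\vee\cdots\vee\gamma_{k_n})$. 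The two index sets are disjoint, which contradicts $\mathsf{L}$-independence. The degenerate cases $m=0$ or $n=0$ need a convention: either Definition~\ref{def:ind} is read with empty conjunction $\top$ and empty disjunction $\bot$, or I pad each side with a fresh element of $\Gamma$ belonging to the other side. I expect this edge case, together with Step 1, to be the only delicate points. Once they are handled, $\mathsf{L}+\Gamma_S$ is consistent, and Lemma~\ref{Lemma:Lindenbaum} gives a Post complete extension $\mathsf{P}_S\supseteq\mathsf{L}+\Gamma_S$.

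\textbf{Step 3: distinctness.} If $S\neq S'$, pick $i\in S\setminus S'$ (or symmetrically). Then $\gamma_i\in\mathsf{P}_S$ and $\neg\gamma_i\in\mathsf{P}_{S'}$. Since both are consistent, $\mathsf{P}_S\neq\mathsf{P}_{S'}$. The map $S\mapsto\mathsf{P}_S$ is therefore injective, so $\rho(\mathsf{L})\geq 2^{\aleph_0}$, and equality follows from Lemma~\ref{Lemma:MaxPostCompleteExtensions}.
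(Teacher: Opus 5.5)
Your proposal is correct and is essentially the standard Tarski--Segerberg argument that the paper relies on (the paper omits the proof and cites Segerberg). For each $S\subseteq\mathbb{N}^+$, $\Gamma_S$ is $\mathsf{L}$-consistent by independence, and since uniform substitution leaves variable-free formulas unchanged, $\mathsf{L}+\Gamma_S$ is consistent; Lindenbaum then gives a Post complete extension for each $S$, and these are pairwise distinct because distinct $\Gamma_S$ are incompatible. Your treatment of the empty-antecedent and empty-consequent cases is also sound: you weaken by a fresh member of the infinite set $\Gamma$.
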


  To prove that $\rho(\mathsf{CK})=2^{\aleph_{0}}$, we identify a countably infinite $\mathsf{CK}$-independent subset of $Fm^\bot$. For this purpose, we deploy some of the model theory from Section~\ref{Subsection:Semantics}.

  \begin{definition}\label{def:broom}
For $I \subset \mathbb{N}^+$, the \textit{$I$-nest frame} is the proposition selection frame $\mathfrak{F}^{I} = \langle \Omega^I, \pi^I \rangle$ where:
    \begin{itemize}
        \item $\Omega^{I} = \{0\} \cup \{ (n, k) : n \in \mathbb{N}^+\setminus I, 1 \leq k \leq n \}$;

        \item For $X \subseteq \Omega^I$:

        \begin{itemize}
            \item[i.] $\pi^I(0, X)=\{(n, m) \in \Omega^I\setminus\{0\} : m=1 \}$;

            \item[ii.] $\pi^I((i, j), X)=\begin{cases} \{(i, j+1)\} & \hfill \text{ if } j < i\\ \emptyset & \hfil \text{else.} \end{cases}$
        \end{itemize}
    \end{itemize}
  \end{definition}

With respect to the $I$-nest frame $\mathfrak{F}^{I} = \langle \Omega^I, \pi^I \rangle$, we define $R^I \subseteq \Omega^I \times \Omega^I$ as follows:
\begin{itemize}
    \item $w R^I y \Leftrightarrow y \in \pi^I(w, X)$ for some $X \subseteq \Omega^{I}$.
\end{itemize}

\noindent Moreover, for $n \in \mathbb{N}$, $R^I_n \subseteq \Omega^I \times \Omega^I$ is inductively defined as follows:

\begin{itemize}
\item $x R^I_0 y \Leftrightarrow x = y$; $x R^I_{n+1} y \Leftrightarrow \text{there is a $z \in \Omega^I$ such that }wR^I_n z\text{ and }zR^I y$.
%\item $x R^I_{n+1} y \Leftrightarrow \text{ there is a $z \in \Omega^I$ such that }wR^I_n z\text{ and }zR^I y$
\end{itemize}

\noindent Finally, using these relations, we define $R^I[w]=\{x \in \Omega^I : w R^I x\}$ and $R^I_n[w]=\{x \in \Omega^I : w R^I_n x\}$.

Essentially, the $I$-nest frame can be interpreted as a Kripke frame, that is, a degenerate proposition selection frame in which the accessibility relation interprets the selection function. In each pair $(i, j)$, the first coordinate $i$ specifies the chain, while the second coordinate $j$ specifies the position within that chain. For each state $(i, j)$, the selection function $\pi^{I}((i, j), \cdot)$ simply selects the successor of $(i, j)$ in the $i$-th chain. If $I = \emptyset$, the frame consists roughly of all finite chains of natural numbers rooted at $0$, as illustrated in Figure~\ref{Figure:EmptyNestFrame}, where the arrows represent the relation $R^I$.

\begin{figure}[h]
\begin{tikzcd}
         &                                                                                                                        &                     &                     & {(3, 3)} \\
         & {(2, 2)}                                                                                                               &                     & {(3, 2)} \arrow[ru] &          \\
{(1, 1)} & {(2, 1)} \arrow[u]                                                                                                     & {(3, 1)} \arrow[ru] & {}                  &          \\
         & 0 \arrow[lu] \arrow[u] \arrow[ru] \arrow[rru, no head, dashed] \arrow[rr, no head, dashed] \arrow[rd, no head, dashed] &                     & {}                  &          \\
         &                                                                                                                        & {}                  &                     &         
\end{tikzcd}
\caption{The $\emptyset$-nest frame.}
\label{Figure:EmptyNestFrame}
\end{figure}

Let $I\subset\mathbb{N}^{+}$; the $I$-nest frame is derived from the $\emptyset$-nest frame by removing all branches indexed by $I$; that is, by excluding all pairs whose first coordinate is an element of $I$.

\begin{lemma}\label{lem:propofbroomframe}
For any $I$-nest frame $\mathfrak{F}^{I} = \langle \Omega^I, \pi^I \rangle$:

    \begin{itemize}

        \item[i.] For all $w \in \Omega^I$ and all $X, Y \subseteq W$, $\pi^I(w, X)=\pi^I(w, Y)=R^I[w]$;

        \item[ii.] For all $(i, j) \in \Omega^I \setminus \{0\}$, $R^I[(i, j)]=\begin{cases} (i, j+1) & \hfill \text{ if }j < i \\ \emptyset & \text{ else;} \end{cases}$

        \item[iii.] For all $n \in \mathbb{N}^+\setminus I$, $R^I_n[0]=\{(i, j) : (i, j) \in \Omega^I, j=n\}$, and $R^I_n[0]\neq\emptyset$.

    \end{itemize}
\end{lemma}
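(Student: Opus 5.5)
The argument is a direct unpacking of Definition~\ref{def:broom} together with the definitions of $R^I$ and $R^I_n$; no earlier lemma is needed. I will take the three items in order, since (iii) uses (i) and (ii).

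For (i), observe that the clauses defining $\pi^I(w,X)$ never mention $X$: for $w=0$ the value is always $\{(n,1)\in\Omega^I\}$, and for $w=(i,j)$ it is $\{(i,j+1)\}$ or $\emptyset$ according as $j<i$ or not. So $\pi^I(w,X)=\pi^I(w,Y)$ for all $X,Y$. Since $R^I[w]=\bigcup_{X}\pi^I(w,X)$ by definition of $R^I$, and all terms of this union coincide, $R^I[w]=\pi^I(w,X)$ for any $X$. Item (ii) is then just clause (ii) of Definition~\ref{def:broom} read through (i). Here $(i,j+1)$ is to be understood as the singleton $\{(i,j+1)\}$; it belongs to $\Omega^I$ because $i\notin I$ and $j+1\leq i$.

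For (iii), I will show by induction on $n\geq 1$ that $R^I_n[0]=\{(i,j)\in\Omega^I : j=n\}$. The statement's restriction $n\in\mathbb{N}^+\setminus I$ is only needed for nonemptiness; the equality itself holds for every $n\geq1$.

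Base case $n=1$: we have $R^I_1[0]=R^I[0]$, since $R^I_0$ is the identity. By (i) this equals $\{(i,1)\in\Omega^I\}$.

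Inductive step: $R^I_{n+1}[0]=\bigcup_{z\in R^I_n[0]}R^I[z]$. By the induction hypothesis each such $z$ has the form $(i,n)$ with $n\leq i$ and $i\notin I$. By (ii), $R^I[(i,n)]$ is $\{(i,n+1)\}$ when $n<i$ and $\emptyset$ otherwise. Hence the union is $\{(i,n+1): i\notin I,\ n+1\leq i\}$, which is exactly $\{(i,j)\in\Omega^I: j=n+1\}$. For the reverse inclusion, any $(i,n+1)\in\Omega^I$ is reached from $(i,n)$, and $(i,n)\in\Omega^I$ because $n<n+1\leq i$.

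Nonemptiness: if $n\notin I$ then $(n,n)\in\Omega^I$ and has second coordinate $n$, so $R^I_n[0]\neq\emptyset$.

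The only step I expect to need any care is this inductive step, specifically checking that membership in $\Omega^I$ is preserved in both directions (the constraints $i\notin I$ and $1\leq k\leq i$). I will also silently correct two small notational slips in the statement and definitions: the variable $w$ appearing in the definition of $R^I_{n+1}$ should be $x$, and $W$ in item (i) should be $\Omega^I$.
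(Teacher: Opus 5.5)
Your proposal is correct and takes the same approach as the paper: items (i) and (ii) are read off from Definition~\ref{def:broom} and the definition of $R^I$, and (iii) is proved by induction on $n$ using (ii) and the definition of $R^I_n$. Your side remarks are also accurate: the equality in (iii) holds for every $n\geq 1$, and $n\notin I$ is needed only for nonemptiness, via $(n,n)\in\Omega^I$; the notational slips you flag ($w$ for $x$, $W$ for $\Omega^I$, and the missing braces around $(i,j+1)$) are genuine.
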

\begin{proof} 
Points (i) and (ii) are straightforward by construction and the definition of $R^I$. Point (iii) is easily proved by induction on $n$ using point (ii) and the definition of $R^I_n$.
\end{proof}

An important property of $I$-nest frames is that certain formulas can be evaluated at the root solely by inspecting which branches originate from it. Let $\lozenge^n_\top$ be defined inductively as follows ($n \in \mathbb{N}$):
  \begin{itemize}

    %\item $\square^0_\top \varphi = \varphi$; $\square^{n+1}_\top \varphi = \square_\top\square^n_\top\varphi$.

    \item $\lozenge^0_\top \varphi = \varphi$; $\lozenge^{n+1}_\top \varphi = \lozenge_\top\lozenge^n_\top\varphi$.
  \end{itemize}

\begin{lemma}
\label{lem:broommod}
Consider any proposition selection model $\mathfrak{M}=\langle \Omega^I, \pi^I, V \rangle$ based on a $I$-nest frame. For all $n \in \mathbb{N}^+$, \[\models^\mathfrak{M}_0 \lozenge^n_\top\square_{\top}\bot \Leftrightarrow (n, n) \in \Omega^I.\]
\end{lemma}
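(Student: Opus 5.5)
The plan is to reduce everything to Kripke-style reasoning. By Lemma~\ref{lem:propofbroomframe}(i), the selection function at every world ignores its propositional argument, so in any model over $\mathfrak{F}^I$ we have, for every $w$ and every $\varphi$,
\[
\models^{\mathfrak{M}}_w \square_\top\varphi \iff R^I[w]\subseteq[\varphi]^{\mathfrak{M}}, \qquad \models^{\mathfrak{M}}_w \lozenge_\top\varphi \iff R^I[w]\cap[\varphi]^{\mathfrak{M}}\neq\emptyset .
\]
In particular $[\square_\top\bot]^{\mathfrak{M}}=\{w : R^I[w]=\emptyset\}$, i.e.\ the dead ends of the frame. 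By Lemma~\ref{lem:propofbroomframe}(ii), a point $(i,j)$ is a dead end iff $j=i$, while $0$ is never a dead end, since $\pi^I(0,X)$ contains $(n,1)$ for each $n\notin I$ and $I$ is a proper subset of $\mathbb{N}^+$. So $[\square_\top\bot]^{\mathfrak{M}}=\{(i,i): i\in\mathbb{N}^+\setminus I\}$, independently of $V$.

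Next, by a routine induction on $n$ using the displayed clause for $\lozenge_\top$ and the definition of $R^I_n$, I show that
\[
\models^{\mathfrak{M}}_w\lozenge^n_\top\varphi \iff R^I_n[w]\cap[\varphi]^{\mathfrak{M}}\neq\emptyset .
\]
Taking $w=0$ and $\varphi=\square_\top\bot$, the left side of the lemma holds iff some $(i,j)$ with $0R^I_n(i,j)$ satisfies $j=i$.

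It remains to compute $R^I_n[0]$. The intended content of Lemma~\ref{lem:propofbroomframe}(iii) is that $R^I_n[0]=\{(i,j)\in\Omega^I : j=n\}$ for every $n\geq1$. (As stated, it is restricted to $n\notin I$, but the inductive proof from (ii) gives the description for all $n\geq1$; only the nonemptiness clause depends on $n$.) With this, the condition becomes: there is $(i,n)\in\Omega^I$ with $i=n$, i.e.\ $(n,n)\in\Omega^I$. That is exactly the right-hand side.

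The only delicate point is this use of (iii) for $n\in I$; I will re-derive the description of $R^I_n[0]$ by induction. For the base case, $R^I_1[0]=R^I[0]=\{(m,1): m\notin I\}$. For the step, the successors of $(i,n)$ are $(i,n+1)$ when $n<i$ and nothing otherwise, so $R^I_{n+1}[0]=\{(i,n+1)\in\Omega^I\}$. Note also that $R^I_n[0]$ may be empty, and then both sides are false, which is consistent with the claim.
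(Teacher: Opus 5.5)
Your proposal is correct and follows essentially the same route as the paper: reduce $\lozenge^n_\top$ to the existence of an $R^I_n$-successor, identify $[\square_\top\bot]$ with the dead ends $(i,i)$, and use $R^I_n[0]=\{(i,n)\in\Omega^I\}$. Re-deriving that description of $R^I_n[0]$ for all $n\geq 1$, not just for $n\notin I$ as Lemma~\ref{lem:propofbroomframe}(iii) is stated, is a worthwhile tightening. The paper's proof uses it silently for $n\in I$, which is exactly the case Lemma~\ref{lem:ind.set} later needs.
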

\begin{proof}
Clearly, for all $w \in \Omega^I$, $\models^\mathfrak{M}_w \lozenge^n_\top\varphi \Leftrightarrow \exists x \in R^I_n[w]\text{ such that } \models^\mathfrak{M}_x \varphi$. Additionally, observe that for all $w \in \Omega^I$, $\models_w^\mathfrak{M} \square_\top \bot \Leftrightarrow R^I[w] =\emptyset$. By Lemma \ref{lem:propofbroomframe}(ii), we know that for $(i, j) \in \Omega^I$, $R^I[(i, j)]=\emptyset$ iff $i=j$ (note that if $j>i$, then $(i,j)\not\in\Omega^{I}$). Therefore, by Lemma \ref{lem:propofbroomframe} again, $\models^\mathfrak{M}_0 \lozenge^n_\top\square_\top \bot$ if and only if $\exists (i, j) \in R^I_n[0]\text{ such that } R^I[(i, j)]=\emptyset$; that is, if and only if $(i,j) = (n, n) \in R^I_n[0]$, which obtains just in case $(n, n) \in \Omega^I$.
\end{proof}

\begin{lemma}\label{lem:ind.set}
The set $\Pi=\{\lozenge^n_\top \square_\top \bot : n \in \mathbb{N}^+\}$ is $\mathsf{CK}$-independent.
\end{lemma}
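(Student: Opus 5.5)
We argue by contraposition, using soundness of $\mathsf{CK}$ with respect to proposition selection frames (Chellas). Take finitely many formulas $\varphi_1,\dots,\varphi_m,\psi_1,\dots,\psi_n\in\Pi$ with $\{\varphi_1,\dots,\varphi_m\}\cap\{\psi_1,\dots,\psi_n\}=\emptyset$. Write $\varphi_r=\lozenge^{a_r}_\top\square_\top\bot$ and $\psi_s=\lozenge^{b_s}_\top\square_\top\bot$. Then the index sets $A=\{a_1,\dots,a_m\}$ and $B=\{b_1,\dots,b_n\}$ are disjoint finite subsets of $\mathbb{N}^+$, since distinct indices give distinct formulas. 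It suffices to exhibit a proposition selection model and a world falsifying $(\varphi_1\wedge\dots\wedge\varphi_m)\to(\psi_1\vee\dots\vee\psi_n)$. Since every proposition selection frame validates $\mathsf{CK}$, that implication is then not a theorem of $\mathsf{CK}$, which is exactly $\mathsf{CK}$-independence in the sense of Definition~\ref{def:ind}.

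The countermodel is the $I$-nest frame with $I=B$. Note that $B\subset\mathbb{N}^+$ holds because $B$ is finite, so the frame is well defined. Take any valuation $V$; it is irrelevant, because the formulas in $\Pi$ are variable-free. Evaluate at the root $0$ using Lemma~\ref{lem:broommod}. For each $a_r\in A$, we have $a_r\notin B$, so $(a_r,a_r)\in\Omega^B$ and hence $\models_0\varphi_r$. For each $b_s\in B$, the chain $b_s$ has been removed, so $(b_s,b_s)\notin\Omega^B$ and hence $\not\models_0\psi_s$. The conjunction of the $\varphi_r$ is therefore true at $0$ and the disjunction of the $\psi_s$ is false there, so the implication fails at $0$, as required.

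Two edge cases need a remark. If $m=0$, the empty conjunction is $\top$ and the same argument applies. If $n=0$, the empty disjunction is $\bot$, and we need the $\varphi_r$ to be jointly satisfiable; the $\emptyset$-nest frame (or any $I$ disjoint from $A$) provides this. Either way the definition's intended reading is covered.

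The only step that needs real care is Lemma~\ref{lem:broommod}, which we may assume. Beyond that, the one thing to confirm is that the $I$-nest frame is a genuine proposition selection frame and so validates $\mathsf{CK}$. This is immediate, because $\pi^I$ is an arbitrary function $\Omega^I\times\mathcal{P}(\Omega^I)\to\mathcal{P}(\Omega^I)$ and Chellas's soundness result covers all such frames. So I expect no serious obstacle.
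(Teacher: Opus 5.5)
Your proposal is correct and matches the paper's proof: you take the nest frame indexed by the exponents of the consequent formulas ($EX(B)$ in the paper), use Lemma~\ref{lem:broommod} at the root $0$ to make every antecedent formula true and every consequent formula false, and conclude by soundness of $\mathsf{CK}$ with respect to proposition selection frames. Your remarks that $B\subset\mathbb{N}^+$ is proper and on the degenerate cases $m=0$ or $n=0$ are harmless additions that the paper leaves implicit.
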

\begin{proof}
Take any two disjoint finite subsets of $\Pi$, $A=\{\lozenge^{k}_\top\square_\top \bot,\dots, \lozenge^{l}_\top\square_\top \bot \}$ and  $B=\{\lozenge^{m}_\top\square_\top \bot,\dots, \lozenge^{n}_\top\square_\top \bot \}$. For $C\subseteq\Pi$, define $EX(C) = \{i :  \lozenge^i_\top\square_\top\bot \in C\}$, that is, the set of exponents associated with $C$. Since $A$ and $B$ are disjoint, $EX(A)\cap EX(B)=\emptyset$. We now show that $\nvdash_\mathsf{CK} \bigwedge A \to \bigvee B$ by constructing a countermodel.

Consider any proposition selection model $\mathfrak{M}$ over the $EX(B)$-nest frame $\mathfrak{F}^{EX(B)}=\langle \Omega^{EX(B)}, \pi^{EX(B)}\rangle$. By Lemma \ref{lem:broommod}, since $EX(A)\cap EX(B)=\emptyset$, we have that: 
\begin{itemize}
    \item For all $i \in EX(A)$, $\models_0^\mathfrak{M}  \lozenge_\top^i\square_\top\bot$ (i.e., for all $\varphi \in A$, $\models_0^\mathfrak{M}\varphi$);

    \item For all $j \in EX(B)$, $\not\models_0^\mathfrak{M}  \lozenge_\top^j\square_\top\bot$ (i.e., for all $\psi \in B$, $\not\models_0^\mathfrak{M}\psi$).
\end{itemize}
Hence, $\not\models_0^\mathfrak{M} \bigwedge A \to \bigvee B$, from which the result follows by soundness (Section~\ref{Subsection:Semantics}). Therefore, $\Pi$ is $\mathsf{CK}$-independent.
\end{proof}

\begin{theorem}
\label{theorem:CKextensions}
$\mathsf{CK}$ has $2^{\aleph_0}$ many Post complete extensions.
\end{theorem}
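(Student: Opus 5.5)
The argument combines the two bounds already available. For the lower bound I will invoke Lemma~\ref{lem:segerbergunc} with $\mathsf{L}=\mathsf{CK}$ and the set $\Pi=\{\lozenge^n_\top\square_\top\bot : n\in\mathbb{N}^+\}$. For the upper bound I will invoke Lemma~\ref{Lemma:MaxPostCompleteExtensions}. The lemma's hypotheses then have to be verified one at a time.

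First, $\mathsf{CK}$ is consistent. This follows from soundness with respect to proposition selection frames: any single frame, say the one-point frame with $\pi(a,X)=\emptyset$ from Corollary~\ref{Corollary:NormalFrames}, refutes $\bot$. Second, $\Pi\subseteq Fm^\bot$, since its members are built from $\top=\neg\bot$, $\bot$, $\neg$ and $\boxright$ only. Third, $\Pi$ is countably infinite. The map $n\mapsto\lozenge^n_\top\square_\top\bot$ is injective because distinct $n$ give formulas of distinct length, so $\Pi$ is infinite, and it is clearly countable. (Alternatively, Lemma~\ref{lem:broommod} already separates them semantically: for $m\neq n$, the $\{m\}$-nest frame makes $\lozenge^n_\top\square_\top\bot$ true and $\lozenge^m_\top\square_\top\bot$ false at $0$.) Fourth, $\Pi$ is $\mathsf{CK}$-independent, which is exactly Lemma~\ref{lem:ind.set}.

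Lemma~\ref{lem:segerbergunc} then yields $\rho(\mathsf{CK})\geq 2^{\aleph_0}$. Together with $\rho(\mathsf{CK})\leq 2^{\aleph_0}$ from Lemma~\ref{Lemma:MaxPostCompleteExtensions}, this gives $\rho(\mathsf{CK})=2^{\aleph_0}$.

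None of these steps is a real obstacle; the substantive work sits in Lemma~\ref{lem:ind.set} and in the omitted proof of Lemma~\ref{lem:segerbergunc}. The one point I would double-check is that the cited lemma, as stated for conditional logics, counts Post complete extensions in $\Ext(\mathsf{CK})$, with no normality requirement. By Corollary~\ref{Corollary:NormalPCLogics}, all but four of these extensions are then non-normal, and that is consistent with the claim.
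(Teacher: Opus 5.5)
Your proposal is correct and is exactly the paper's argument, which combines Lemma~\ref{Lemma:MaxPostCompleteExtensions} for the upper bound with Lemmas~\ref{lem:segerbergunc} and~\ref{lem:ind.set}, applied to $\Pi$, for the lower bound. Your additional checks (consistency of $\mathsf{CK}$, $\Pi\subseteq Fm^\bot$, $\Pi$ countably infinite, and the remark that all but four of the extensions are non-normal) are accurate and only make explicit what the paper leaves tacit.
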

\begin{proof}
By Lemmas \ref{Lemma:MaxPostCompleteExtensions}, \ref{lem:segerbergunc}, and \ref{lem:ind.set}.
\end{proof}

\section{Conclusions}
\label{Section:Conclusions}

To the best of our knowledge, the results presented here represent the first systematic investigation of the structure of lattices of conditional logics, specifically focusing on their coatoms: the Post complete conditional logics. The methods employed were adapted from classical and modal logic (see, especially, \cite{Tarski1934-35Extensions,Makinson1971,Segerberg1972PostCompleteness}). From this study, at least two promising research trajectories emerge. 

The first involves a more algebraically oriented analysis of lattices of conditional logics. A natural starting point would be to characterize the structural properties of certain lattices by identifying splitting pairs as well as meet/join-irreducible elements. Such an inquiry would generalize existing results concerning the lattice of normal modal logics (cf.~Kracht~\cite[Ch. 7]{Kracht1999-KRATAT-3}) to the domain of conditional logics. Recent advances in abstract algebraic logic provide a robust framework for a dual approach in this direction. Specifically, the algebraizability results for variably strict conditional logics with respect to varieties of Boolean algebras with binary operators as developed by Rosella and Ugolini~\cite{RU2025} would facilitate a dual study of the lattice of variably strict conditional logics through their associated lattice of algebraic subvarieties. Our result that there are exactly two Post complete variably strict conditional logics (Corollary~\ref{Corollary:NormalVSLogics}) suggests that the lattice of subvarieties of variably strict conditional algebras introduced in \cite{RU2025} is atomic, possessing precisely two atoms. Since these algebraizability results can be extended to other families of normal conditional logics, the structural study of the lattice of normal conditional logics can proceed in parallel with a dual investigation of the lattice of corresponding algebraic subvarieties.

A second research direction would be to further investigate the  connections between lattices of conditional and (multi)modal logics. Identifying isomorphisms and embeddings is of both technical and conceptual interest. Technically, such connections would clarify the extent to which metatheoretic results concerning modal logics and their lattices can be transferred to the conditional domain, and vice versa. Our results naturally invite several questions. For instance, the parallels between the lattices of seminormal conditional logics and normal $\omega$-multimodal logics---see Appendix~\ref{Section:Connections with multimodal logic}---prompt the question of whether these two structures are isomorphic (we conjecture that they are not). On a more philosophical note, investigating these connections might help us address the question of reducibility between necessity and conditionality: if one lattice demonstrates a higher degree of granularity while subsuming the other, it may suggest conceptual priority of the one notion over the other. This invites deeper reflection on whether conditionals are essentially necessities in disguise, or whether necessity is properly understood as a species of conditionality.

\appendix

\section{Connections with multimodal logic}
\label{Section:Connections with multimodal logic}

In this appendix, we examine in greater detail some of the connections between conditional and multimodal logic mentioned in Section \ref{Section:Post completeness}. First, we adapt Thomason’s construction of an incomplete bimodal tense logic from \cite{Thomason1972} to give an example of a consistent seminormal conditional logic without any frames, from which the frame incompleteness of this logic follows immediately.\footnote{For other incompleteness results in conditional logic, see Nute~\cite{Nute1978Incompleteness} and Kocurek et al.~\cite{KocurekWalshWeiss}. Incidentally, the result reported in Nute~\cite{Nute1978Incompleteness} bootstraps on a different incompleteness result due to Thomason~\cite{Thomason1974Incompleteness}.} Second, we employ a construction analogous to the one in Definition~\ref{Definition:CkRecursiveFrames} to demonstrate that there are $2^{\aleph_0}$ many Post complete normal $\omega$-multimodal logics.\footnote{For some other results on Post completeness in multimodal logic, see \cite{MaChen2021,ChenMa2024}.} 

\subsection{An incompleteness theorem in seminormal conditional logic}
\label{Subsection:An incompleteness theorem in seminormal conditional logic}

As every consistent normal conditional logic has a frame (Corollary~\ref{Corollary:NormalFrames}), so too does every consistent normal \textit{monomodal} logic \cite[p.~712]{Segerberg1972PostCompleteness}. Thomason~\cite{Thomason1972} showed that the latter result does not extend even to all normal bimodal logics. We will presently demonstrate that the former result does not extend to all seminormal conditional logics either. Our argument follows Thomason's proof from \cite[\S4]{Thomason1972} (cf.~Blackburn et al.~\cite[\S4.4]{Blackburn2001}) closely.

\begin{definition}
    %\label{def:tenselogic}
    Let $\mathsf{Ct}$ be the smallest seminormal conditional logic containing the following axioms (i.e., $\mathsf{Ct} = \mathsf{Ck} \boxplus \{\text{\ref{Axiom:T1}}, \text{\ref{Axiom:T2}}, \text{\ref{Axiom:linear}},\text{\ref{Axiom:Seriality}}, \text{\ref{Axiom:LOB}}\}$):
    \begin{equation*}
        \label{Axiom:T1}
        \tag{T1}
        \varphi \to \square_\top\lozenge_\bot \varphi
    \end{equation*}
    \begin{equation*}
        \label{Axiom:T2}
        \tag{T2}
        \varphi \to \square_\bot\lozenge_\top \varphi
    \end{equation*}
    \begin{equation*}
        \label{Axiom:linear}
        \tag{LIN}
        (\lozenge_\top \varphi \wedge \lozenge_\top\psi)\to (\lozenge_\top(\varphi \wedge \psi) \vee \lozenge_\top(\varphi \wedge \lozenge_\top\psi)\vee \lozenge_\top(\psi \wedge\lozenge_\top\varphi))
    \end{equation*}
    \begin{equation*}
        \label{Axiom:Seriality}
        \tag{CD}
        \square_\top \varphi \to \lozenge_\top \varphi
    \end{equation*}
    \begin{equation*}
        \label{Axiom:LOB}
        \tag{L{\"O}B}
        \square_{\bot}(\square_\bot\varphi\to\varphi)\to\square_\bot\varphi
    \end{equation*}
\end{definition}

\noindent
The reader may recognize \ref{Axiom:LOB} as a form of Löb's axiom from provability logic; note that $\square_\bot\varphi\to\square_\bot\square_\bot\varphi$ follows from this (cf.~Boolos~\cite[p.~11]{Boolos1993}). We now introduce an important family of formula selection frames.

\begin{definition}
    %\label{def:tenseframe}
    A \emph{tense frame} is a formula selection frame $\mathfrak{F}^f = \langle W, f \rangle$ satisfying the following properties (define $x <_\top y$ iff $y \in f(x, \top)$; $x <_\bot y$ iff $y \in f(x, \bot)$):
    \begin{equation*}
        \label{Prop:T1}
        \tag{Tf}
        x <_\top y\text{ iff }y <_\bot x
    \end{equation*}
    \begin{equation*}
        \label{Prop:C4}
        \tag{4f}
        \text{if ($x<_\top y$ and $y<_\top z$) then $x<_\top z$}
    \end{equation*}
    \begin{equation*}
        \label{Prop:linear}
        \tag{LINf}
        \begin{split}
        \text{if }(x<_\top y\text{ and }x<_\top z)\text{ then }(y <_\top z\text{ or }z <_\top y\text{ or }y=z)
        \end{split}
    \end{equation*}
    \begin{equation*}
        \label{Prop:Seriality}
        \tag{CDf}
        \text{for all }w \in W,\text{ there is some }u \in W\text{ such that }w<_\top u
    \end{equation*}
    \begin{equation*}
        \label{Prop:LOB}
        \tag{L{\"O}Bf}
        \begin{split}
        \text{for all }\emptyset\neq S\subseteq W,\text{ there is a }w \in S\text{ such that for all }u \in S, w\not<_\bot u
        \end{split}
    \end{equation*}
\end{definition}

The strategy of the argument is as follows. By showing that (1) only tense frames validate \textsf{Ct} and (2) no tense frame validates:
    \begin{equation*}
        \label{Axiom:McK}
        \tag{McK}
        \square_\top\lozenge_\top\varphi \to \lozenge_\top\square_\top\varphi
    \end{equation*}
we thereby establish that $\mathsf{Ct} \boxplus \text{\ref{Axiom:McK}}$ has no frames. However, we also show that $\mathsf{Ct} \boxplus \text{\ref{Axiom:McK}}$ is consistent, from which incompleteness follows.

\begin{lemma}
    \label{Lemma:CtCorrespondence}
    For any formula selection frame $\mathfrak{F}^f=\langle W, f \rangle$, $\mathsf{Ct} \subseteq \mathsf{L}(\mathfrak{F}^f)$ only if $\mathfrak{F}^f$ is a tense frame.
\end{lemma}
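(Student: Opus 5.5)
We argue contrapositively and property by property. Fix a formula selection frame $\mathfrak{F}^f=\langle W,f\rangle$ with $\mathsf{Ct}\subseteq\mathsf{L}(\mathfrak{F}^f)$. Each of the five conditions will be derived from the validity of the corresponding axiom on $\mathfrak{F}^f$. If a condition fails, we build a valuation $V$ that falsifies the axiom at some world. The key observation is that in a formula selection frame the antecedents $\top$ and $\bot$ index fixed sets $f(x,\top)$ and $f(x,\bot)$, whatever the valuation. So $\square_\top$ and $\square_\bot$ behave exactly like Kripke boxes for the relations $<_\top$ and $<_\bot$, with $\models_x\square_\top\varphi$ iff $[\varphi]\supseteq f(x,\top)$. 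The frame is therefore simply a bimodal Kripke frame, and the standard Sahlqvist-style correspondence arguments for tense logic (as in Thomason's and Blackburn et al.'s treatment) can be transcribed. Only the axioms themselves are needed; the rules of $\mathsf{Ck}$ play no role in this direction.

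The easy cases come first.

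\textbf{(Tf).} Suppose $x<_\top y$ but not $y<_\bot x$. Put $V(p)=\{x\}$. Then \ref{Axiom:T1} forces $\lozenge_\bot p$ at $y$, which is false. Symmetrically, \ref{Axiom:T2} with $V(p)=\{y\}$ handles the converse direction.

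\textbf{(CDf).} If $f(w,\top)=\emptyset$, then at $w$ we have $\square_\top\bot$ true and $\lozenge_\top\bot$ false, refuting \ref{Axiom:Seriality} with $\varphi:=\bot$, or with $V(p)=\emptyset$.

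\textbf{(LINf).} Suppose $x<_\top y$ and $x<_\top z$, with $y,z$ distinct and incomparable. Take $V(p)=\{y\}$ and $V(q)=\{z\}$. The antecedent of \ref{Axiom:linear} holds at $x$, while each of the three disjuncts requires $y=z$, $y<_\top z$, or $z<_\top y$, so the axiom fails.

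The main work is \textbf{(4f)} and \textbf{(LÖBf)}, both from \ref{Axiom:LOB}. For (LÖBf), suppose some nonempty $S$ has no $<_\bot$-maximal element in the stated sense, i.e.\ every $w\in S$ has some $u\in S$ with $w<_\bot u$. Set $V(p)=W\setminus S$ and pick $w\in S$.
\begin{itemize}
\item[(a)] $\square_\bot p$ fails at every point of $S$, since each has a $<_\bot$-successor in $S$.
\item[(b)] Hence $\square_\bot p\to p$ holds everywhere: outside $S$ because $p$ is true there, and inside $S$ by (a).
\item[(c)] So $\square_\bot(\square_\bot p\to p)$ holds at $w$ while $\square_\bot p$ fails at $w$, refuting \ref{Axiom:LOB}.
\end{itemize}
This is the standard argument that Löb's axiom forces converse well-foundedness.

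For transitivity of $<_\top$, we use the paper's remark that $\square_\bot\varphi\to\square_\bot\square_\bot\varphi$ is derivable from \ref{Axiom:LOB}, so it is valid on $\mathfrak{F}^f$. It forces transitivity of $<_\bot$: given $a<_\bot b<_\bot c$, take $V(p)=W\setminus\{c\}$ if $a\not<_\bot c$. By (Tf), $<_\top$ is the converse of $<_\bot$, so $<_\top$ is transitive too, giving (4f).

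The one point needing care is that this derivation of 4 from Löb uses the rules of $\mathsf{Ck}$, namely \ref{Rule:RCEC} and the monotonicity that follows from \ref{Axiom:CM}. This is harmless, because we only need that this theorem of $\mathsf{Ct}$ lies in $\mathsf{L}(\mathfrak{F}^f)$, which the hypothesis grants. I expect this bookkeeping, together with keeping the proof of (4f) after that of (Tf), to be the only subtle step.
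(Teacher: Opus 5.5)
Your proposal is correct and follows essentially the same route as the paper. Like the paper, you give a contrapositive frame-correspondence argument with tailored valuations: (T1)/(T2) yield (Tf), then (LIN), (CD) and (L{\"O}B) yield the remaining conditions, with (Tf) transferring transitivity from $<_\bot$ to $<_\top$. You simply carry out the cases the paper leaves to the reader.

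Getting (4f) via the derived theorem $\square_\bot\varphi\to\square_\bot\square_\bot\varphi$, rather than directly from (L{\"O}B), is legitimate. The hypothesis $\mathsf{Ct}\subseteq\mathsf{L}(\mathfrak{F}^f)$ covers all theorems of $\mathsf{Ct}$, which sidesteps the fact that $\mathsf{L}(\mathfrak{F}^f)$ need not be closed under substitution or the rules.
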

\begin{proof}
    We focus on a single case and leave the rest to the reader. Specifically, we demonstrate that, for any formula selection frame $\mathfrak{F}^f=\langle W, f\rangle$, if $\models^{\mathfrak{F}^f} \text{\ref{Axiom:T1}}$ and $\models^{\mathfrak{F}^f} \text{\ref{Axiom:T2}}$, then \ref{Prop:T1} holds for $\mathfrak{F}^f$. Suppose \ref{Prop:T1} fails; then, for some $x, y \in W$, either $(i)$ $x <_\top y$ and $y \not<_\bot x$ or $(ii)$ $x \not<_\top y$ and $y <_\bot x$. If $(i)$ holds, consider any model $\mathcal{M}^f=\langle W, f, V\rangle$ over $\mathfrak{F}^f$ such that $V(p)=\{x\}$. Since $y \not<_\bot x$, we have that $\not\models^{\mathcal{M}^f}_y \lozenge_\bot p$; but as $x <_\top y$, $\not\models^{\mathcal{M}^f}_x \square_\top \lozenge_\bot p$. Therefore, $\not\models^{\mathfrak{F}^f} \text{\ref{Axiom:T1}}$. If $(ii)$ holds, a symmetric argument shows that $\not\models^{\mathfrak{F}^f} \text{\ref{Axiom:T2}}$. The reader may verify the following implications: $\models^{\mathfrak{F}^f} \text{\ref{Axiom:linear}}$ implies \ref{Prop:linear}; $\models^{\mathfrak{F}^f} \text{\ref{Axiom:Seriality}}$ implies \ref{Prop:Seriality}; and, under the assumption that \ref{Prop:T1} holds for $\mathfrak{F}^f$, $\models^{\mathfrak{F}^f} \text{\ref{Axiom:LOB}}$ implies both \ref{Prop:LOB} and \ref{Prop:C4}.
\end{proof}

Where $\langle{S,<}\rangle$ is a strict linear order, we call $T\subseteq S$ \textit{cofinal} in $S$ if $\forall{s\in{S}},\exists{t\in{T}}(s<t)$.

\begin{lemma}
    \label{Lemma:NoMcK}
    For every tense frame $\mathfrak{F}^f$, $\not\models^{\mathfrak{F}^f} \square_\top\lozenge_\top\varphi \to \lozenge_\top\square_\top\varphi$.
\end{lemma}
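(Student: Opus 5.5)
Following Thomason's argument for the bimodal tense logic, I will fix an arbitrary tense frame $\mathfrak{F}^f=\langle W,f\rangle$ and construct a valuation refuting an instance of \ref{Axiom:McK} at some world; concretely, the instance $\square_\top\lozenge_\top p\to\lozenge_\top\square_\top p$. First I extract the order structure. By \ref{Prop:C4}, $<_\top$ is transitive. It is irreflexive: if $x<_\top x$, then by \ref{Prop:T1} $x<_\bot x$, contradicting \ref{Prop:LOB} applied to $S=\{x\}$. Fix $w\in W$ and set $S_w=\{u: w<_\top u\}$. By \ref{Prop:linear} together with transitivity and irreflexivity, $\langle S_w,<_\top\rangle$ is a strict linear order. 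By \ref{Prop:Seriality} and transitivity, every $u\in S_w$ has a $<_\top$-successor, which again lies in $S_w$; so $S_w$ is nonempty and has no maximum.

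The main step is to find a partition of $S_w$ into two sets $A$ and $B$, both cofinal in $S_w$. With $V(p)=A$ (worlds outside $S_w$ are irrelevant), every $u\in S_w$ satisfies $\lozenge_\top p$, because $A$ is cofinal and $\{v: u<_\top v\}\subseteq S_w$. Hence $\models_w\square_\top\lozenge_\top p$. Likewise, every $u\in S_w$ satisfies $\lozenge_\top\neg p$, because $B$ is cofinal, so no $u\in S_w$ satisfies $\square_\top p$. Hence $\not\models_w\lozenge_\top\square_\top p$, and the instance fails at $w$.

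To build the partition I use \ref{Prop:LOB} together with \ref{Prop:T1}. The relation $<_\bot$ is the converse of $<_\top$, so \ref{Prop:LOB} says every nonempty subset has a $<_\top$-minimal element. On the chain $S_w$ this means $<_\top$ is a well-order, order-isomorphic to an ordinal $\alpha$, which is a limit ordinal because $S_w$ is nonempty with no maximum. Every ordinal below $\alpha$ can be written uniquely as $\lambda+n$ with $\lambda$ zero or a limit and $n<\omega$. I let $A$ be the set of points with $n$ even and $B$ the set with $n$ odd. Given any point $\lambda+n$, the points $\lambda+n+1$ and $\lambda+n+2$ lie below $\alpha$, since $\alpha$ is a limit, and they have opposite parity. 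So both $A$ and $B$ are cofinal.

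I expect the main obstacle to be getting the well-ordering right. The contrapositive direction of \ref{Prop:LOB} must be read carefully: the minimal element $m$ of $S$ satisfies $m\not<_\bot u$ for all $u\in S$, that is, $u\not<_\top m$. Existence of successors, needed for cofinality, also relies on linearity and transitivity. The parity partition itself is routine once $S_w$ is recognized as a limit ordinal.
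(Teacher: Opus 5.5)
Your proof is correct and has the same skeleton as the paper's. You fix $w$, let $S=\{x : w<_\top x\}$, split $S$ into two disjoint cofinal parts, and make $p$ true on exactly one of them, so that every point of $S$ later sees both $p$ and $\neg p$.

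The difference lies in how the split is produced:
\begin{itemize}
\item The paper asserts that $S$ is a nonempty strict linear order with no greatest element. It then invokes the axiom of choice to obtain $T$ with $T$ and $S\setminus T$ both cofinal. This works for any such order (choose a cofinal well-ordered sequence and alternate along it), so in effect the paper uses (L{\"O}Bf) only to make $<_\top$ irreflexive.
\item You combine (L{\"O}Bf) with (Tf) to see that $<_\top$ is well-founded, so $S$ is a well-order of limit type. You then write the partition down explicitly by the parity of $n$ in $\lambda+n$.
\end{itemize}

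Your route buys two things. It is choice-free, and it supplies the verifications the paper dismisses as ``clear'':
\begin{itemize}
\item irreflexivity, from (L{\"O}Bf) applied to a singleton;
\item linearity, from (LINf) plus transitivity;
\item absence of a maximum, from (CDf) plus transitivity.
\end{itemize}
The paper's route buys generality. It refutes McK on any frame in which the $<_\top$-successors of some point form a strict linear order without a greatest element, whether or not that order is well-founded.
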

\begin{proof}
    Consider a tense frame $\mathfrak{F}^f=\langle W, f \rangle$. $W\neq\emptyset$, so pick a $w\in W$ and consider $S := \{x : w <_\top x\}$; clearly, this is a nonempty strict linear order with no greatest element. By the axiom of choice, we can define $\emptyset\neq T\subset S$ such that both $T$ and $S\setminus{T}$ are cofinal in $S$. 
    
    Now consider any model $\mathfrak{M}^f=\langle W, f, V \rangle$ over $\mathfrak{F}^f$ such that $V(p)=T$. For any $x$ such that $w <_\top x$, by the cofinality of $T$ and $S\setminus{T}$, there are $w_1 \in T$ and $w_2 \in S\setminus{T}$ such that $(i)$ $x <_\top w_1$ and $(ii)$ $x <_\top w_2$. Thus, by $(i)$, $\models^{\mathfrak{M}^f}_x \lozenge_\top p$, and by $(ii)$, $\not\models^{\mathfrak{M}^f}_x \square_\top p$, whence $\models^{\mathfrak{M}^f}_w \square_\top\lozenge_\top p$ but $\not\models^{\mathfrak{M}^f}_w \lozenge_\top\square_\top p$. Therefore, $\not\models^{\mathfrak{F}^f} \text{\ref{Axiom:McK}}$.
\end{proof}

\begin{theorem}
    \label{Theorem:NoFramesforCt}
    There is no formula selection frame $\mathfrak{F}^f$ such that $\mathsf{Ct} \boxplus \text{\em \ref{Axiom:McK}} \subseteq \mathsf{L}(\mathfrak{F}^f)$.
\end{theorem}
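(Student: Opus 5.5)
The argument is a direct combination of Lemma~\ref{Lemma:CtCorrespondence} and Lemma~\ref{Lemma:NoMcK}, by contradiction. Suppose some formula selection frame $\mathfrak{F}^f=\langle W,f\rangle$ satisfies $\mathsf{Ct}\boxplus\text{\ref{Axiom:McK}}\subseteq\mathsf{L}(\mathfrak{F}^f)$.

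First, since $\mathsf{Ct}\subseteq\mathsf{Ct}\boxplus\text{\ref{Axiom:McK}}$, we get $\mathsf{Ct}\subseteq\mathsf{L}(\mathfrak{F}^f)$. Lemma~\ref{Lemma:CtCorrespondence} then tells us that $\mathfrak{F}^f$ is a tense frame.

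Second, the instance $\square_\top\lozenge_\top p\to\lozenge_\top\square_\top p$ of \ref{Axiom:McK} belongs to $\mathsf{Ct}\boxplus\text{\ref{Axiom:McK}}$. Hence it lies in $\mathsf{L}(\mathfrak{F}^f)$, i.e.\ it holds at every world of every model over $\mathfrak{F}^f$. But Lemma~\ref{Lemma:NoMcK} says that on every tense frame there is a model and a world $w$ at which this instance fails. That model is the one with $V(p)=T$, where $T$ and $S\setminus T$ are both cofinal in $S=\{x: w<_\top x\}$. This contradicts the previous sentence, so no such frame exists.

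The only point needing care is that we are working with formula selection frames, for which $\mathsf{L}(\mathfrak{F}^f)$ need not be closed under uniform substitution. So "$\mathsf{Ct}\boxplus\text{\ref{Axiom:McK}}\subseteq\mathsf{L}(\mathfrak{F}^f)$" must be read formula-wise. I would check two things. First, Lemma~\ref{Lemma:NoMcK} refutes the specific instance with the variable $p$, and that instance is literally a member of the logic. Second, Lemma~\ref{Lemma:CtCorrespondence} only needs the particular instances of the axioms that its proof uses (e.g.\ $V(p)=\{x\}$ for \ref{Axiom:T1}), and these too are members of $\mathsf{Ct}$. With these checks, membership in $\mathsf{L}(\mathfrak{F}^f)$ is exactly what both lemmas use, so no substitution-closure issue arises.
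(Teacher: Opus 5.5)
Your proposal is correct and matches the paper's proof: assume such a frame exists, use Lemma~\ref{Lemma:CtCorrespondence} to conclude it is a tense frame, then use Lemma~\ref{Lemma:NoMcK} to refute the $p$-instance of \ref{Axiom:McK}, which belongs to $\mathsf{Ct}\boxplus\text{\ref{Axiom:McK}}$. Your caveat about reading the inclusion formula-wise is sound, since it is plain set inclusion and needs no substitution-closure of $\mathsf{L}(\mathfrak{F}^f)$, though the paper leaves this implicit.
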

\begin{proof}
    For reductio, suppose that there were a formula selection frame $\mathfrak{F}^f$ such that $\mathsf{Ct} \boxplus \text{\ref{Axiom:McK}} \subseteq \mathsf{L}(\mathfrak{F}^f)$. By Lemma~\ref{Lemma:CtCorrespondence}, $\mathfrak{F}^f$ must be a tense frame. However, by Lemma~\ref{Lemma:NoMcK}, $\not\models^{\mathfrak{F}^f} \text{\ref{Axiom:McK}}$, which contradicts our assumption.
\end{proof}

Thomason's incomplete consistent tense logic, \textsf{Thom}, can be formulated in a bimodal language $\mathcal{L}^\textsf{Thom}$ differing from $\mathcal{L}$ only in substituting the primitive unary modalities $\mathcal{G}$ (`always \textit{going} to be that') and $\mathcal{H}$ (`always \textit{has} been that') for $\boxright$. Let $Fm^{\textsf{Thom}}$ be the set of all formulas in $\mathcal{L}^\textsf{Thom}$. The axioms of \textsf{Thom} are essentially those of $\mathsf{Ct} \boxplus \text{\ref{Axiom:McK}}$, systematically replacing $\Box_\top$ by $\mathcal{G}$ and $\Box_\bot$ by $\mathcal{H}$.

\begin{lemma}
    \label{Lemma:CtMcKinseyConsistent}
    $\mathsf{Ct} \boxplus \text{\em \ref{Axiom:McK}}$ is consistent.
\end{lemma}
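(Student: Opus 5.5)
The plan is to reduce consistency of $\mathsf{Ct}\boxplus\text{\ref{Axiom:McK}}$ to Thomason's result that \textsf{Thom} is consistent, which I take as given from \cite[\S4]{Thomason1972}. Concretely, I define a translation $t:Fm\to Fm^{\mathsf{Thom}}$ and show by induction on derivations that $\vdash_{\mathsf{Ct}\boxplus\text{McK}}\varphi$ implies $\vdash_{\mathsf{Thom}}\varphi^t$. Since $\bot^t=\bot$, a derivation of $\bot$ in $\mathsf{Ct}\boxplus\text{\ref{Axiom:McK}}$ would then yield one in \textsf{Thom}, contradicting Thomason.

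The translation has to send each conditional $\alpha\boxright\beta$ to a unary modality applied to $\beta^t$, with the modality chosen from the antecedent. The antecedents $\top$ and $\bot$ must go to $\mathcal{G}$ and $\mathcal{H}$ respectively, while all remaining antecedents can go to either. The difficulty is that the choice has to be respected by uniform substitution. If the choice depends on the syntactic identity of $\alpha$, then substituting $\top$ for $p$ in an axiom instance $p\boxright q$ changes the modality assigned. I therefore fix the choice by a substitution-invariant feature of $\alpha$. The first candidate is to put $(\alpha\boxright\beta)^t=\mathcal{H}\beta^t$ if $\alpha$ is variable-free and classically equivalent to $\bot$ when its $\boxright$-subformulas are treated as atoms, and $(\alpha\boxright\beta)^t=\mathcal{G}\beta^t$ otherwise.

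That specific choice may still fail under substitution, since $p$ is not a contradiction while $p^\sigma=\bot$ is. I expect this to be the main obstacle. The cleaner route is to avoid substitution altogether. Every theorem of $\mathsf{Ct}\boxplus\text{\ref{Axiom:McK}}$ is obtainable by propositional reasoning, modus ponens and \ref{Rule:RCEC} applied to \emph{substitution instances} of \ref{Axiom:CM}, \ref{Axiom:CC}, \ref{Axiom:CN}, T1, T2, LIN, CD, L\"OB and McK. This is the usual pushing of substitution to the axioms: applying a substitution to a derivation gives a derivation, since \ref{Rule:RCEC} commutes with substitution.

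With that normal form in hand, the induction only has to handle individual axiom instances and the rules, and the choice of modality for non-$\top$, non-$\bot$ antecedents becomes harmless. I send $\alpha\boxright\beta$ to $\mathcal{H}\beta^t$ when $\alpha$ is literally $\bot$, and to $\mathcal{G}\beta^t$ otherwise. The checks are then the following.
\begin{itemize}
\item Instances of \ref{Axiom:CM}, \ref{Axiom:CC} and \ref{Axiom:CN} become normal-modal theorems of the form $\mathcal{M}(\beta\wedge\gamma)\leftrightarrow(\mathcal{M}\beta\wedge\mathcal{M}\gamma)$ and $\mathcal{M}\top$, each for a single modality $\mathcal{M}$, because both conditionals in such an instance share the same antecedent.
\item Instances of the tense axioms have the fixed antecedents $\top$ and $\bot$ in their outer occurrences. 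They map exactly onto substitution instances of Thomason's axioms, because $t$ commutes with the connectives and the inner formulas are simply translated.
\item \ref{Rule:RCEC} maps to the monotonicity and congruence rule for $\mathcal{M}$, which \textsf{Thom} has as a normal bimodal logic.
\item Modus ponens is immediate.
\end{itemize}

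The remaining work is to verify the normal-form claim carefully, and in particular that McK and L\"OB instances translate to instances of the corresponding \textsf{Thom} axioms. This goes through once $\top$ is read literally as $\neg\bot$, so that $\top$ is not literally $\bot$ and is sent to $\mathcal{G}$, which is what the translation requires.
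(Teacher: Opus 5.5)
Your proposal is correct and follows the paper's route. Both arguments translate into Thomason's \textsf{Thom}, sending $\top$-antecedents to $\mathcal{G}$ and $\bot$-antecedents to $\mathcal{H}$, and then appeal to Thomason's consistency result; the paper sends all other antecedents to $\mathcal{H}$ where you send them to $\mathcal{G}$, which makes no difference. Your step of pushing uniform substitution down to the axiom instances supplies the detail the paper leaves to the reader, and it is genuinely needed: substituting $\top$ or $\bot$ for $p$ in $p\boxright q$ can switch the modality, so the translation does not commute with substitution.
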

\begin{proof}
    Consider the translation $^\mu:Fm\to Fm^\textsf{Thom}$ defined as follows:
    \begin{itemize}
        \item[$p$.] $p^\mu = p$;
        \item[$\bot$.] $\bot^\mu = \bot$;
        \item[$\neg$.] $(\neg\varphi)^\mu = \neg\varphi^\mu$;
        \item[$\vee$.] $(\varphi\vee\psi)^\mu = \varphi^\mu \vee \psi^\mu$;
        \item[$\boxright$.] $(\varphi\boxright\psi)^\mu = \begin{cases}
            \mathcal{G}\psi^\mu & \text{if }\varphi = \top\\
            \mathcal{H}\psi^\mu & \text{else}
        \end{cases}$
    \end{itemize}
    We leave it to the reader to verify that if $\vdash_{\mathsf{Ct} \boxplus \text{\ref{Axiom:McK}}}\varphi$, then $\vdash_{\textsf{Thom}}\varphi^\mu$. The desired result now follows from Thomason's proof that  $\not\vdash_{\textsf{Thom}}\bot$.
\end{proof}

A seminormal conditional logic $\mathsf{L}$ is \textit{(formula selection) frame incomplete} if for no class of (formula selection) frames $\mathcal{C}$, $\mathsf{L} = \mathsf{L}(\mathcal{C})$.

\begin{corollary}
    %\label{Corollary:SeminormalIncompleteness}
    $\mathsf{Ct} \boxplus \text{\em \ref{Axiom:McK}}$ is frame incomplete.
\end{corollary}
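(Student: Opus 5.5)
The plan is to argue by contradiction, using Lemma~\ref{Lemma:CtMcKinseyConsistent} together with Theorem~\ref{Theorem:NoFramesforCt}. Suppose $\mathsf{Ct} \boxplus \text{\ref{Axiom:McK}} = \mathsf{L}(\mathcal{C})$ for some class $\mathcal{C}$ of formula selection frames. There are two cases, according to whether $\mathcal{C}$ is empty.

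If $\mathcal{C}$ is nonempty, pick any $\mathfrak{F}^f\in\mathcal{C}$. By the definition $\mathsf{L}(\mathcal{C})=\bigcap_{\mathfrak{F}\in\mathcal{C}}\mathsf{L}(\mathfrak{F})$ we obtain
\[\mathsf{Ct} \boxplus \text{\ref{Axiom:McK}} = \mathsf{L}(\mathcal{C}) \subseteq \mathsf{L}(\mathfrak{F}^f).\]
This directly contradicts Theorem~\ref{Theorem:NoFramesforCt}, which says that no formula selection frame validates all of $\mathsf{Ct} \boxplus \text{\ref{Axiom:McK}}$.

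If $\mathcal{C}=\emptyset$, the empty intersection is, by convention, the whole of $Fm$. Then $\mathsf{L}(\mathcal{C})=Fm$ contains $\bot$, so $\mathsf{Ct} \boxplus \text{\ref{Axiom:McK}}$ would be inconsistent. This contradicts Lemma~\ref{Lemma:CtMcKinseyConsistent}.

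Both cases are routine. The only point that needs care is this empty-class convention, so I would state it explicitly: the intersection over an empty index set is taken relative to $Fm$.
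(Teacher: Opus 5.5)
Your proof is correct and is essentially the paper's argument. The paper likewise uses Theorem~\ref{Theorem:NoFramesforCt} to force $\mathcal{C}=\emptyset$, which is your nonempty case, and then observes that $\bot\in\mathsf{L}(\mathcal{C})$ contradicts Lemma~\ref{Lemma:CtMcKinseyConsistent}, which is your empty case; your explicit empty-intersection convention makes precise a step the paper uses tacitly.
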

\begin{proof}
    Suppose there were a class of formula selection frames $\mathcal{C}$ such that $\mathsf{Ct} \boxplus \text{\ref{Axiom:McK}} = \mathsf{L}(\mathcal{C})$. By Theorem~\ref{Theorem:NoFramesforCt}, since $\mathsf{Ct} \boxplus \text{\ref{Axiom:McK}} \subseteq \mathsf{L}(\mathcal{C})$, it must be that $\mathcal{C} = \emptyset$. Then $\bot\in\mathsf{L}(\mathcal{C})$, whence $\bot\in\mathsf{Ct} \boxplus \text{\ref{Axiom:McK}}$, contradicting Lemma~\ref{Lemma:CtMcKinseyConsistent}.
\end{proof}

\subsection{Post completeness in multimodal logic}
\label{Subsection:Post completeness in multimodal logic}

We begin by briefly presenting some features of normal $\omega$-multimodal logic. Consider a propositional language $\mathcal{L}^\omega$ containing a countable set of variables $Var = \{p, q, \dots\}$, the propositional constant $\bot$, and the primitive connectives $\neg$, $\vee$, and $\square_i$ (for every $i <\omega$). Let $Fm^\omega$ denote the set of formulas in $\mathcal{L}^\omega$.  

An \textit{$\omega$-multimodal logic} is a subset of $Fm^\omega$ that contains all classical tautologies and is closed under modus ponens and uniform substitution. Consider the following axioms and rule:
\begin{equation*}
   \label{Axiom:NEC}
   \tag{NEC}
   \square_i \top 
\end{equation*}
\begin{equation*}
   \label{Axiom:DIST}
   \tag{DIST}
   \square_i(\varphi \wedge \psi) \leftrightarrow (\square_i \varphi \wedge \square_i \psi) 
\end{equation*} 
\begin{equation*}
    \label{Rule:RE}
    \tag{RE}
        \frac{\vdash\varphi\leftrightarrow\psi}{\vdash\square_i \varphi \leftrightarrow \square_i \psi}
\end{equation*}
An $\omega$-multimodal logic is \textit{normal} if it contains \ref{Axiom:NEC}, \ref{Axiom:DIST}, and \ref{Rule:RE} (for every $i <\omega$). By $\textsf{K}^\omega$ we intend the smallest normal $\omega$-multimodal logic. $\omega$-multimodal extensions, Post complete extensions, etc., are defined as above (mutatis mutandis).

\begin{definition}
    \label{Definition:MultiKripkeFrame}
    A \textit{multimodal Kripke frame} is a structure $\mathfrak{F}^\omega=\langle{W, \{R_i\}_{i<\omega}}\rangle$ where $W\neq\emptyset$ is a set of worlds and each $R_i$ is a binary accessibility relation on $W$ (we define $R_i[w]=\{x \in W : wR_i x\}$).
\end{definition}    

\begin{definition}
    \label{Definition:MultiKripkeModel}
    A \textit{multimodal Kripke model} is a structure $\mathfrak{M}^\omega=\langle \mathfrak{F}^\omega,V\rangle$, where $\mathfrak{F}^\omega=\langle{W, \{R_i\}_{i<\omega}}\rangle$ is a multimodal Kripke frame and $V:Var\to\mathcal{P}(W)$ is a valuation.
\end{definition}

With respect to a multimodal Kripke model $\mathfrak{M}^\omega=\langle{W, \{R_i\}_{i<\omega}},V\rangle$ and world $w\in W$, the relation $\models^{\mathfrak{M}^\omega}_{w}$ is defined in the usual way for Boolean connectives with the following additional clauses for the modal operators ($[\varphi]^{\mathfrak{M}^\omega}=\{w \in W : \models^{\mathfrak{M}^\omega}_w \varphi\}$):
\begin{itemize}
    \item[$\square_i$.] $\models^{\mathfrak{M}^\omega}_{w}\square_i \varphi$ iff $R_i[w] \subseteq [\varphi]^{\mathfrak{M}^\omega}$.
\end{itemize}
We define that $\models^{\mathfrak{M}^\omega}\varphi$ iff $\models^{\mathfrak{M}^\omega}_{w}\varphi$ for all $w\in W$; and $\models^{\mathfrak{F}^\omega}\varphi$ iff $\models^{\mathfrak{M}^\omega}\varphi$ for every model $\mathfrak{M}^\omega$ over $\mathfrak{F}^\omega$. For a frame $\mathfrak{F}^\omega$, by $\mathsf{L}(\mathfrak{F}^\omega)$ is intended $\{\varphi\in Fm^\omega:\models^{\mathfrak{F}^\omega}\varphi\}$. 

We are now in a position to establish a result analogous to Theorem \ref{Theorem:CkSeminormalExtensions}. Let $\Gamma = \{ \square_i \bot : i < \omega \}$.
    
\begin{definition}
    \label{Definition:MuliRecursiveFrames}
    For $\Delta \subseteq \Gamma$, we define the multimodal Kripke frame $\mathfrak{F}^\Delta=\langle \{a\}, \{R_i\}_{i < \omega}\rangle$ by putting:
    \begin{equation*}
        R_i[a]=
        \begin{cases}
            \{a\} & \text{when } \square_i\bot\in \Gamma \setminus \Delta,
            \\\emptyset & \text{else}
        \end{cases}
    \end{equation*}
\end{definition}

The proofs of the following results are analogous to those culminating in Theorem~\ref{Theorem:CkSeminormalExtensions} above, and are omitted:

\begin{lemma}
    \label{Lemma}
    For any $\Delta\subseteq\Gamma$:
    \begin{itemize}
        \item[i.] $\mathsf{L}(\mathfrak{F}^{\Delta})$ is a consistent normal $\omega$-multimodal logic;
        \item[ii.] $\mathsf{L}(\mathfrak{F}^{\Delta})$ contains $\Delta$;
        \item[iii.] $\mathsf{L}(\mathfrak{F}^{\Delta})$ contains $\{\neg\square_k \bot: \square_k\bot \in\Gamma\setminus\Delta\}$;
        \item[iv.] $\mathsf{L}(\mathfrak{F}^{\Delta})$ contains $(\varphi\leftrightarrow\psi)\to(\square_i \varphi\leftrightarrow\square_i\psi)$ for every $i < \omega$.
    \end{itemize} 
\end{lemma}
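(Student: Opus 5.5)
The four claims are verified directly on the one-point frame $\mathfrak{F}^{\Delta}$, following the pattern of Lemmas~\ref{Lemma:CDeltaLogics} and \ref{Lemma:CkSeminormalExtensions}. Here there is one frame rather than a class, because the modalities $\square_i$ are primitive and indexed by $i$, not by formulas. A substitution therefore cannot move one index to another, so the formula-selection detour used for \textsf{Ck} is unnecessary.

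For (i), I would first note that $\mathsf{L}(\mathfrak{F}^{\Delta})$ contains all tautologies and is closed under modus ponens; both facts are immediate from the truth clauses. Closure under uniform substitution is the standard Kripke-frame argument. Suppose $\varphi^\sigma$ fails in some model $\mathfrak{M}$ on $\mathfrak{F}^\Delta$. Define $V^\sigma(p)=[p^\sigma]^{\mathfrak{M}}$ on the same frame, and prove by induction on $\psi$ that $\models^{\mathfrak{M}}_a\psi^\sigma$ iff $\models^{\mathfrak{M}^\sigma}_a\psi$. The modal step holds because $R_i$ is unchanged and $(\square_i\psi)^\sigma=\square_i\psi^\sigma$. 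It follows that $\varphi$ fails on $\mathfrak{F}^\Delta$.

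Still for (i), I would check normality. \ref{Axiom:NEC} holds since $R_i[a]\subseteq W=[\top]$. \ref{Axiom:DIST} holds since $R_i[a]\subseteq[\varphi]\cap[\psi]$ iff $R_i[a]\subseteq[\varphi]$ and $R_i[a]\subseteq[\psi]$. For \ref{Rule:RE}, if $\varphi\leftrightarrow\psi$ is valid on the frame, then in every model $[\varphi]=[\psi]$, so $\square_i\varphi\leftrightarrow\square_i\psi$ holds at $a$. Consistency follows because the frame is nonempty, so $\bot\notin\mathsf{L}(\mathfrak{F}^\Delta)$.

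For (ii), if $\square_i\bot\in\Delta$ then $R_i[a]=\emptyset\subseteq[\bot]$, so $\square_i\bot$ is valid. For (iii), if $\square_k\bot\in\Gamma\setminus\Delta$ then $R_k[a]=\{a\}\not\subseteq\emptyset$, so $\neg\square_k\bot$ is valid. For (iv), I would argue exactly as in Lemma~\ref{Lemma:CkSeminormalExtensions}(iii). If $\models_a\varphi\leftrightarrow\psi$ in a model, then, since $W=\{a\}$, we have $[\varphi]=[\psi]$. Whether $R_i[a]$ is $\{a\}$ or $\emptyset$, the two inclusions $R_i[a]\subseteq[\varphi]$ and $R_i[a]\subseteq[\psi]$ then coincide.

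The only step with any real content is closure under substitution in (i), and it is routine once one notes that the index $i$ is fixed by the syntax. Item (iv) relies essentially on $W$ being a singleton, since the local truth of $\varphi\leftrightarrow\psi$ at $a$ must give global equality of the truth sets.
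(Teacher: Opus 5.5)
Your proposal is correct and is essentially the paper's approach. The paper omits this proof as analogous to Lemmas~\ref{Lemma:CDeltaLogics} and \ref{Lemma:CkSeminormalExtensions}, and your argument is that adaptation, including the simplification the paper itself flags: a single one-point Kripke frame suffices, because uniform substitution cannot change the fixed indices of the $\square_i$.
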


\begin{theorem}
    \label{Theorem:MultiNormalExtensions}
    $\mathsf{K}^\omega$ has $2^{\aleph_0}$ many \textit{normal} Post complete extensions.
\end{theorem}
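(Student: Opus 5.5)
The plan mirrors the proof of Theorem~\ref{Theorem:CkSeminormalExtensions}, using the frames $\mathfrak{F}^\Delta$ of Definition~\ref{Definition:MuliRecursiveFrames} and the Lemma immediately preceding the statement.

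First, I will confirm the items of that Lemma. For (i), it is routine that the logic of a single Kripke frame contains all tautologies and is closed under modus ponens and uniform substitution. Validity of \ref{Axiom:NEC} and \ref{Axiom:DIST} is standard for Kripke frames, and closure under \ref{Rule:RE} follows from (iv) with modus ponens. Consistency holds because $\bot$ fails at $a$.

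Items (ii) and (iii) come straight from the definition of $R_i[a]$. If $\square_i\bot\in\Delta$, then $R_i[a]=\emptyset\subseteq[\bot]$. Otherwise $R_i[a]=\{a\}\not\subseteq\emptyset$, so $\neg\square_i\bot$ holds.

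For (iv), take any model with $a\models\varphi\leftrightarrow\psi$. Since $W=\{a\}$, this gives $[\varphi]=[\psi]$, and hence $\square_i\varphi$ and $\square_i\psi$ receive the same truth value. Unlike the conditional case, no substitution argument is needed here: the logic of a fixed Kripke frame is automatically closed under uniform substitution, since the valuation is arbitrary.

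Next comes the main argument. For each $\Delta\subseteq\Gamma$, $\mathsf{L}(\mathfrak{F}^\Delta)$ is a consistent extension of $\mathsf{K}^\omega$, because it is normal and $\mathsf{K}^\omega$ is the smallest normal logic. By Lemma~\ref{Lemma:Lindenbaum}, adapted to $Fm^\omega$ (a Zorn argument over consistent extensions), it has a Post complete extension $\mathsf{P}_\Delta$. By (iv), every extension of $\mathsf{L}(\mathfrak{F}^\Delta)$ contains each theorem $(\varphi\leftrightarrow\psi)\to(\square_i\varphi\leftrightarrow\square_i\psi)$. It is therefore closed under \ref{Rule:RE} by modus ponens, and it inherits \ref{Axiom:NEC} and \ref{Axiom:DIST}. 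So $\mathsf{P}_\Delta$ is normal.

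Now let $\Delta\neq\Delta'$, say $\square_k\bot\in\Delta\setminus\Delta'$. By (ii) and (iii), $\square_k\bot\in\mathsf{P}_\Delta$ and $\neg\square_k\bot\in\mathsf{P}_{\Delta'}$. Thus $\mathsf{P}_\Delta\neq\mathsf{P}_{\Delta'}$, since otherwise that logic would prove $\bot$. This yields at least $2^{|\Gamma|}=2^{\aleph_0}$ distinct normal Post complete extensions.

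For the upper bound, $Fm^\omega$ is countable, so there are at most $2^{\aleph_0}$ subsets; this is the analogue of Lemma~\ref{Lemma:MaxPostCompleteExtensions}.

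The only step I expect to need care is that Post complete extensions of $\mathsf{L}(\mathfrak{F}^\Delta)$ really are normal. This hinges on closure under \ref{Rule:RE} being forced by the axiom-form (iv), since Lindenbaum extensions need not be closed under rules. Everything else is routine.
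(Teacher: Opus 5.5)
Your proposal is correct and matches the paper's argument, which it omits and says is analogous to Theorem~\ref{Theorem:CkSeminormalExtensions}. You verify the four items of the Lemma for the one-point frames $\mathfrak{F}^\Delta$, take a Lindenbaum extension of each $\mathsf{L}(\mathfrak{F}^\Delta)$, use the axiom form (iv) to force closure under (RE) in every extension, separate distinct $\Delta$ by $\square_k\bot$ versus $\neg\square_k\bot$, and bound above by counting subsets of $Fm^\omega$. Your observation that a single Kripke frame suffices, so no substitution argument is needed, is exactly the ``simplified form'' the paper alludes to.
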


The foregoing provides some insight into the structural relationship between multimodal logic and conditional logic. While normal conditional logics can be viewed as multimodal logics indexed by propositions, the lattice of normal conditional logics is \textit{not} isomorphic to the lattice of normal $\omega$-multimodal logics: the former contains exactly four coatoms, while the latter contains $2^{\aleph_0}$ coatoms---just like the lattice of seminormal conditional logics. Though a structural analysis of these two lattices, $\NExt(\mathsf{K}^\omega)$ and $\nExt(\mathsf{Ck})$, lies beyond the scope of this paper, this at least suggests that normal $\omega$-multimodal logics are more akin to seminormal conditional logics than normal conditional logics.

\nocite{*} %to print all the biblio
%\bibliography{conditionals}
\printbibliography

%%% ACKNOWLEDGMENTS

%\begin{acks}
%This work was supported by PLEXUS (Grant Agreement No. 101086295), a Marie Skłodowska-Curie Action funded by the European Union under the Horizon Europe Research and Innovation Programme. G. Rosella acknowledges further financial support from the OP JAK project Knowledge in the Age of Distrust (TRUST) (No. CZ.02.01.01/00/23\_025/0008711). 
%\end{acks}

\end{document}